\newcommand{\be}{\begin{equation}}
\newcommand{\ee}{\end{equation}}
\newcommand{\ba}{\begin{array}}
\newcommand{\ea}{\end{array}}
\newcommand{\bea}{\begin{eqnarray}}
\newcommand{\eea}{\end{eqnarray}}
\newcommand{\bee}{\begin{eqnarray*}}
\newcommand{\eee}{\end{eqnarray*}}
\newcommand{\Or}{\mathcal{O}}
\newcommand{\tgamma}{\widetilde{\gamma}}
\newcommand{\Mod}{{\mathbf {\tt Mod}}}
\newcommand{\loger}[1]{\log^{\frac 1 2} \! \left ( 2 + #1^{-1} \right )}
\newcommand{\In}{\mathfrak{I}}
\newcommand{\Pa}{\mathcal{P}}
\newcommand{\Qp}{Q_\Pa}
\newcommand{\Qbp}{\bm{Q}_\Pa}
\newtheorem{thm}{Theorem}
\newtheorem{lemma}{Lemma}
\newtheorem{prop}{Proposition}
\newtheorem{remark}{Remark}
\newtheorem*{remark*}{Remark}
\numberwithin{equation}{section}
\numberwithin{lemma}{section}
\numberwithin{prop}{section}
\numberwithin{cor}{section}
\numberwithin{thm}{section}
\numberwithin{definition}{section}
\numberwithin{remark}{section}
\numberwithin{assump}{section}
\def\section{\@startsection{section}{1}%
  \z@{1.5\linespacing\@plus\linespacing}{.5\linespacing}%
  {\normalfont\bfseries\large\centering}}
\newcommand{\R}{\mathbb{R}}
\newcommand{\C}{\mathbb{C}}
\newcommand{\N}{\mathbb{N}}
\renewcommand{\leq}{\leqslant}
\renewcommand{\geq}{\geqslant}
\newcommand{\D}{D}
\newcommand{\weakto}{\rightharpoonup}
\newcommand{\eps}{\varepsilon}
\newcommand{\cE}{\mathcal E}
\newcommand{\cJ}{\mathcal J}
\def\pa{\partial}
\def\fref{\eqref}
\def\l{\lambda}
\def\e{\varepsilon}
\def\lsl{\frac{\lambda_s}{\lambda}}
\def\l{\lambda}
\def\matchal{\mathcal}
\begin{document}

\title[Nondispersive solutions to the $L^2$-critical half-wave equation]{Nondispersive solutions to \\ the $L^2$-critical Half-Wave Equation}

\author{Joachim Krieger, Enno Lenzmann, and Pierre Rapha\"el}

\address{Department of Mathematics, EPFL, CH-1051 Lausanne, Switzerland}
\email{joachim.krieger@epfl.ch}
\address{Mathematisches Institut, Universit\"at Basel, Rheinsprung 21, CH-4051 Basel, Switzerland}
\email{enno.lenzmann@unibas.ch}
\address{Institut Math\'ematiques de Toulouse $ \&$ Institut Universitaire de France, Universit\'e Paul Sabatier, Toulouse, France}
\email{pierre.raphael@math.univ-toulouse.fr}
\maketitle

\begin{abstract}
We consider the focusing $L^2$-critical half-wave equation  in one space dimension
$$
i \partial_t u = D u - |u|^2 u, 
$$
where $D$ denotes the first-order fractional derivative. Standard arguments show that there is a critical threshold $M_* > 0$ such that all $H^{1/2}$ solutions with $\| u \|_{L^2} < M_*$ extend globally in time, while solutions with $\| u \|_{L^2} \geq M_*$ may develop singularities in finite time.

In this paper, we first prove the existence of a family of traveling waves with subcritical arbitrarily small mass. We then give a second example of nondispersive dynamics and show the existence of finite-time blowup solutions with minimal mass $\| u_0 \|_{L^2} = M_*$. More precisely, we construct a family of minimal mass blowup solutions that are parametrized by the energy $E_0 >0$ and the linear momentum $P_0 \in \R$. In particular, our main result (and its proof) can be seen as a model scenario of minimal mass blowup for $L^2$-critical nonlinear PDE with nonlocal dispersion.
\end{abstract}


\section{Introduction and Main Results}


\subsection{Setting of the problem}


We consider in this paper the half-wave equation in $N=1$ space dimension with focusing $L^2$-critical nonlinearity:
\begin{equation} \label{eq:wave}
(Wave)\ \ \left \{ \begin{array}{l} i \partial_t u = D u - |u|^2 u, \\
u(t_0,x) = u_0(x), \quad u : I \times \R \to \C. \end{array} 
\right . 
\end{equation}
Here $I \subset \R$ is an interval containing the initial time $t_0 \in \R$ and
$$\widehat{(Df)}(\xi) = |\xi| \widehat{f}(\xi)$$ 
denotes the first-order nonlocal fractional derivative. Equation \eqref{eq:wave} can be seen as a canonical model for an $L^2$-critical PDE with nonlocal dispersion given by a fractional power of the Laplacian. Let us mention that evolution problems with nonlocal dispersion such as \eqref{eq:wave} naturally arise in various physical settings, which include continuum limits of lattice systems \cite{KiLeSt2011}, models for wave turbulence \cite{CaMa2001,MaMc1997}, and gravitational collapse \cite{ElSc2007,FrLe2007}. The defocusing version of this problem is at the heart of the derivation of asymptotic models of weak turbulence through the cubic Szeg\"o model studied by Gerard, Grellier \cite{GeGr2010} and Pocovnicu \cite{Po2011}. From a mathematical point of view, the absence of specific symmetries for evolution problems like \eqref{eq:wave} (e.\,g., there is no Lorentz, Galilean or pseudo-conformal symmetry) makes the analysis rather intricate and hence robust (i.\,e., symmetry-independent) arguments have to be found. 

Let us review some basic facts about the problem at hand. The Cauchy problem for \eqref{eq:wave} is locally well-posed in the energy space $H^{1/2}(\R)$; see Appendix \ref{sec:cauchy} for more details. In particular, we have the blowup alternative that if $u\in C^0(I; H^{1/2}(\R))$ is the unique corresponding solution to \eqref{eq:wave} with its maximal time of existence $t_0 < T \leq +\infty$, then 
\be
\label{blowupcrtiterion}
T<+\infty\ \ \mbox{implies}\ \ \lim_{t\to T^-}\|u(t)\|_{H^{1/2}}=+\infty.
\ee
Furthermore, equation \eqref{eq:wave} is an infinite-dimensional Hamiltonian system, which admits three conservation laws given by
\bee
&& {\rm Mass}:\ \ M(u)=\int |u(t,x)|^2dx=M(u_0),\\
&& {\rm Momentum}: \ \ P(u)= \int -i \pa_xu(t,x)\overline{u}(t,x)dx= P(u_0),\\
&& {\rm Energy}: \ \ E(u)=\frac12\int|D^{\frac12}u|^2(t,x)dx-\frac14\int|u(t,x)|^4dx=E(u_0).
\eee
For the half-wave equation \eqref{eq:wave}, one easily verifies that the mapping
\be \label{eq:symmetry}
u(t,x) \mapsto \lambda_0^{\frac 12}u(\lambda_0 t+t_0,\lambda_0 x+x_0)e^{i\gamma_0}, \ \ (\lambda_0,t_0,x_0)\in\R^*_+\times\R\times \R,
\ee
yields a group of symmetries. In particular, the scaling symmetry leaves the $L^2$-norm in space invariant, and hence the problem is {\it mass critical}. A classical criterion of global-in-time existence for $H^{1/2}$ initial data is derived by using the Gagliardo--Nirenberg with best constant $$\forall u\in H^{1/2}(\R), \ \ \|u\|_{L^4}^4\leq C_*\|\D^{\frac12}u\|_{L^2}^2\|u\|_{L^2}^2$$ which is attained at the {\it unique} (up to symmetries) {\it ground state} profile solution to
\be
\label{neoenoneo}
DQ+Q-Q^3=0, \ \ Q(x)>0, \ \ Q\in H^{1/2}(\R).
\ee Note that the existence of this object follows from standard variational techniques, but uniqueness of $Q$, which was obtained recently by Frank and Lenzmann in \cite{FrLe2012},  is a nontrivial claim in a nonlocal setting, since ODE techniques do not apply for \eqref{neoenoneo}. The outcome is the sharp lower bound on the energy 
\be \label{eq:gwpbound}
\forall u \in H^{1/2}(\R), \ \ E(u)\geq\frac12 \int |D^{\frac 12}u|^2\left[1-\frac{\|u\|_{L^2}^2}{\|Q\|_{L^2}^2}\right],
\ee
 which together with the conservation of mass and energy and the blowup criterion \fref{blowupcrtiterion} implies that initial data $u_0\in H^{1/2}(\R)$ with 
$$\|u_0\|_{L^2}< M_*=\|Q\|_{L^2}$$
 generate global-in-time solutions. For more details about the Cauchy problem \eqref{eq:wave}, we refer to Appendix \ref{sec:cauchy} below.


\subsection{The local NLS problem} 


The structure of the problem is similar to the celebrated mass critical NLS problem 
\begin{equation} \label{nls}
(NLS)\ \ \left \{ \begin{array}{l} i \partial_t u +\Delta u+|u|^{\frac4N} u=0, \\
u(t_0,x) = u_0(x), \quad u : I \times \R^N \to \C. \end{array} 
\right . 
\end{equation}
From  Weinstein \cite{We1983}, we recall that initial data $u_0\in H^1(\R^N)$ with $\|u_0\|_{L^2}<\|Q\|_{L^2}$ yield global-in-time solutions where $Q$ is from \cite{GiNiNi1979}, \cite{Kw1989} the unique up to symmetries solution to the ground state equation $$\Delta Q-Q+Q^{1+\frac 4N}=0, \ \ Q(x)>0, \ \ Q\in H^1(\R^N).$$ Moreover, solutions with $$u_0\in H^1\cap  \{xu\in L^2\}, \ \ \|u_0\|_{L^2}<\|Q\|_{L^2}$$ scatter, i.\,e., they behave asymptotically like free waves, see \cite{Ca2003}, and this result has been extended to all $L^2$ data with subcritical mass using the Kenig-Merle road map \cite{KeMe2006} in \cite{Do2011,KiTaVi2009,KiViZh2008}.
At the mass critical level, the additional pseudo-conformal symmetry of \fref{nls} yields an explicit minimal blowup element: 
\be
\label{defst}
S(t,x)=\frac{1}{|t|^{\frac N2}}Q\left(\frac xt\right)e^{\frac{i}{t}}e^{\frac{i|x|^2}{4t}}, \ \ \|S(t)\|_{L^2}=\|Q\|_{L^2}.
\ee 
Merle obtains in \cite{Me1993} the classification in the energy space of minimal blowup elements: the only $H^1$ finite time blowup solution with mass $\|u\|_{L^2}=\|Q\|_{L^2}$ is given by \fref{defst} up to the symmetries of the flow.\\
The question of existence and possibly uniqueness of minimal blowup elements has since then been addressed in various settings. The existence of minimal elements can be obtained for (NLS) on a domain \cite{BuGeTz2003} through a brute force perturbative argument. Similar thresholds solutions have been derived for the energy critical problem \cite{DuMe2009} using the virial algebra and without the description of the associated blowup scenario. Then a robust dynamical approach for the proof of both existence and uniqueness has been developed by Rapha\"el, Szeftel \cite{RaSz2011}, for an inhomogeneous problem $$i \partial_t u +\Delta u+k(x)|u|^{\frac4N} u=0,$$ which is a canonical problem proposed by Merle \cite{Me1996} to break the pseudo-conformal symmetry, and which under suitable assumption on $k$ does not admit minimal blowup elements. The existence and uniqueness of minimal blowup elements in \cite{RaSz2011} is proved under sharp assumptions of $k$ which induce a dramatic influence on the bubble of concentration, and allows one to go beyond the perturbative case treated in \cite{BaReDu2011}. The argument involves a soft compactness argument using the reversibility of the flow as in \cite{Me1990}, \cite{Ma2005}, \cite{MaMe2006}, and a mixed Energy/Morawetz monotonicity formula available {\it at the minimal mass level} only to integrate the flow backwards from the singularity. The robustness of this approach and further developments led in \cite{MaMeRa2012b} to the construction of minimal elements for the mass critical gKdV problem $$\pa_tu+(u_{xx}+u^5)_x=0, \ \ (t,x)\in \R\times\R,$$ which was an open problem since the pioneering work \cite{MaMe2002}.


\subsection{Statement of the main results} 


We address in this paper the question of existence nondispersive dynamics, and we will describe two example of such dynamics: mass subcritical traveling solitary waves and minimal mass blowup solutions. In what follows, let $Q \in H^{1/2}(\R)$ be the unique ground state solution of \eqref{neoenoneo}.\\

A family of {\it mass subcritical} traveling solitary waves can be constructed using variational techniques and adapting the proof in \cite{FrJoLe2007}. Also, note that {\em no such elements exist} for the $L^2$-critical (NLS), since initial data with subcritical $L^2$-mass for \eqref{nls} always scatter to a free wave (see \cite{Do2011,KiTaVi2009,KiViZh2008}) and in particular no solitary waves with subcritical mass\footnote{because of the cancellation $\|Qe^{i\beta y}\|_{L^2}=\|Q\|_{L^2}$ for all $\beta \in \R^N$.} exist for (NLS). For the half-wave equation \eqref{eq:wave}, we have the following result.

\begin{thm}[Traveling solitary waves with arbitrarily small mass]
\label{thmone}
For all $|v|<1$, there exists a profile $Q_v \in H^{1/2}(\R)$ such that $$u(t,x)=e^{it} Q_v(x-vt)$$ is a traveling solitary wave solution to \fref{eq:wave}. Moreover, the mass $\| Q_v \|_{L^2}$ is strictly decreasing with respect to $|v|$ and for any $0< |v| < 1$, the profile $Q_v$ has strictly subcritical mass:
\be\label{nkonceno}
\|Q_v\|_{L^2}<\|Q\|_{L^2}.
\ee 
There also holds the limits: 
$$\left\{\begin{array}{ll} \| Q_v \|_{L^2} \to  \| Q \|_{L^2} & \ \ \mbox{as $|v| \to 0$},\\
\|Q_v\|_{L^2}\to 0 & \ \ \mbox{as $|v| \to 1$}.\end{array}\right.
$$
\end{thm}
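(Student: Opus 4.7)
The plan is to construct $Q_v$ as a minimizer of a constrained Weinstein-type functional. First, substituting the ansatz $u(t,x)=e^{it}Q_v(x-vt)$ into \eqref{eq:wave} yields the nonlocal elliptic equation
\[
\mathcal{L}_v Q_v + Q_v = |Q_v|^2 Q_v, \qquad \mathcal{L}_v := D + iv\partial_x,
\]
where $\mathcal{L}_v$ is the Fourier multiplier with symbol $m_v(\xi)=|\xi|-v\xi$. For $|v|<1$ one has $m_v(\xi)\geq (1-|v|)|\xi|\geq 0$, so $\mathcal{L}_v$ is a nonnegative self-adjoint operator equivalent to $D$ on $H^{1/2}(\R)$.

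The existence of $Q_v$ follows from minimizing the Weinstein ratio
\[
W_v(u) := \frac{\langle \mathcal{L}_v u,u\rangle\,\|u\|_{L^2}^2}{\|u\|_{L^4}^4}, \qquad \mathcal{J}(v) := \inf_{u\in H^{1/2}\setminus\{0\}} W_v(u)\in(0,+\infty),
\]
the two-sided bound on $\mathcal{J}(v)$ coming from the sharp fractional Gagliardo--Nirenberg inequality together with the coercivity $\mathcal{L}_v\geq (1-|v|)D$. Using the two-parameter symmetry $u\mapsto \mu u$ and $u\mapsto \lambda^{1/2}u(\lambda\,\cdot\,)$, I normalize a minimizing sequence $(u_n)$ so that $\|u_n\|_{L^2}=\|u_n\|_{L^4}=1$; then $(u_n)$ is bounded in $H^{1/2}(\R)$ and $\langle\mathcal{L}_v u_n,u_n\rangle\to\mathcal{J}(v)$. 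To overcome translation invariance, I follow Lions' concentration--compactness program, as implemented for related fractional problems in \cite{FrJoLe2007}: after re-centering the $u_n$ at the $L^2$ centers of mass, vanishing is excluded by the $H^{1/2}$ vanishing lemma combined with $\|u_n\|_{L^4}=1$, and dichotomy is excluded by establishing strict subadditivity of $\mathcal{J}$ under mass splitting. Compactness then produces a minimizer $Q_v^\ast\in H^{1/2}$, whose Euler--Lagrange equation $\mathcal{L}_v Q_v^\ast + \mu Q_v^\ast = \nu |Q_v^\ast|^2 Q_v^\ast$ with $\mu,\nu>0$ can be brought to the required normalization by the scaling $Q_v(x):=\sqrt{\mu/\nu}\,Q_v^\ast(\mu x)$.

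For the mass identities I exploit the scale invariance of $W_v$: writing $\mathcal{S}_v(u):=\langle\mathcal{L}_v u,u\rangle+\|u\|_{L^2}^2-\tfrac{1}{2}\|u\|_{L^4}^4$ and differentiating $\mathcal{S}_v(\lambda^{1/2}Q_v(\lambda\,\cdot\,))$ at $\lambda=1$ yields the Pohozaev identity $\|Q_v\|_{L^4}^4=2\langle\mathcal{L}_v Q_v,Q_v\rangle$. Pairing this with the relation $\langle\mathcal{L}_v Q_v,Q_v\rangle+\|Q_v\|_{L^2}^2=\|Q_v\|_{L^4}^4$ obtained by testing the equation against $\bar Q_v$, one finds $\langle\mathcal{L}_v Q_v,Q_v\rangle=\|Q_v\|_{L^2}^2$ and therefore the clean formula $\|Q_v\|_{L^2}^2=2\mathcal{J}(v)$. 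Every mass statement of Theorem \ref{thmone} is thus reduced to a statement about $v\mapsto \mathcal{J}(v)$.

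The qualitative behaviour of $\mathcal{J}$ is then extracted by variational comparison. Since $v\mapsto \langle\mathcal{L}_v u,u\rangle = \langle Du,u\rangle - vP(u)$ is affine in $v$ for each fixed $u$, the function $\mathcal{J}$ is concave on $(-1,1)$ as the infimum of affine functions, and it is even via $x\mapsto -x$. The concavity is strict: if some $u^\ast$ were simultaneously a minimizer at two distinct parameters $v_0\neq v_1$, the chirp perturbation $u^\ast\mapsto u^\ast e^{ikx}$ (which preserves all $L^p$-norms) would enforce the two incompatible critical-point conditions $\int\mathrm{sgn}(\xi)|\hat u^\ast(\xi)|^2\,d\xi = v_0\|u^\ast\|_{L^2}^2 = v_1\|u^\ast\|_{L^2}^2$. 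The endpoint behaviour follows from test-function estimates: as $v\to 0$ concavity together with $\mathcal{J}(0)=\tfrac12\|Q\|_{L^2}^2$ give continuity at $0$, while as $|v|\to 1$ evaluating $W_v$ on a fixed $\phi$ with $\widehat\phi$ supported in the half-line where $m_v$ degenerates yields $\mathcal{J}(v)\leq(1-|v|)\,C_\phi\to 0$. Strict concavity and evenness now force $\mathcal{J}$ to be strictly decreasing on $[0,1)$ from $\tfrac12\|Q\|_{L^2}^2$ down to $0$, which through the mass formula produces every claim of Theorem \ref{thmone}. I expect the hard part to be the concentration--compactness step, where the nonlocality of $\mathcal{L}_v$ and the translation invariance of $W_v$ require careful adaptation of Lions' method; in particular the strict subadditivity of $\mathcal{J}$ under mass splitting must be argued with some care, since $W_v$ is scale-invariant but the scaling couples the $L^2$ and $L^4$ constraints nontrivially.
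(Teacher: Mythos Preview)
Your proposal is correct and follows the same variational setup as the paper (the Weinstein functional $W_v$, concentration--compactness existence citing \cite{FrJoLe2007}, and the mass identity $\|Q_v\|_{L^2}^2 = 2\mathcal J(v)$, which is exactly the paper's relation \eqref{eq:CvQ}). The treatment of the limit $|v|\to 1$ via test functions with one-sided Fourier support is also identical to the paper's.

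Where you diverge is in the monotonicity/strictness argument. The paper first proves the momentum inequality $v\cdot\int\overline{Q_v}(i\nabla Q_v)\le 0$ by a reflection trick, then uses this sign together with the sharp inequality \eqref{sahrpgagl} to compare $\cE_{v_2}(Q_{v_1})$ with $0$ and obtain $\|Q_{v_2}\|_{L^2}\le\|Q_{v_1}\|_{L^2}$; strictness comes from noting that equality would force $Q_{v_1}$ to satisfy two Euler--Lagrange equations whose difference reads $(\lambda-1)Q_{v_1}+(v_2-v_1)\nabla Q_{v_1}=0$, which is absurd. Your route is more structural: you observe that $v\mapsto \mathcal J(v)$ is concave (infimum of affine functions) and even, hence nonincreasing on $[0,1)$, and you upgrade to \emph{strict} concavity by the chirp criticality condition $\int\mathrm{sgn}(\xi)|\widehat{u^\ast}|^2\,d\xi = v\|u^\ast\|_{L^2}^2$, which pins down $v$ uniquely. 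This is a clean argument; it also gives continuity of $\mathcal J$ at $0$ for free. One small point worth making explicit: failure of strict concavity means $\mathcal J$ is affine on some segment, and you then need the standard convex-analysis fact that a minimizer at an \emph{interior} point of that segment must realize the supporting line and hence minimize at every point of the segment --- only then does your chirp contradiction apply. Interestingly, the paper's strictness step and yours are cousins: both extract a contradiction from a single profile minimizing at two distinct velocities, the paper via subtracting Euler--Lagrange equations, you via the first-order optimality in the chirp direction.
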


A second example of nondispersive dynamics corresponds to a minimal mass singularity formation. The existence of blowup solutions for \fref{eq:wave} for which no simple obstruction to global existence like for \fref{nls} has been an open problem, and our claim is that we can adapt the strategy in \cite{RaSz2011} even though dispersion is nonlocal, and we can build through a dynamical argument minimal blowup elements with a complete description of the associated mass concentration scenario. The main result is the following.

\begin{thm}[Existence of minimal mass blowup elements]
\label{thmmain}
For all $(E_0,P_0)\in \R^*_+\times\R$, there exists $t^*<0$ and a minimal mass solution $u\in  C^0([t^*,0); H^{1/2}(\R))$ of equation \eqref{eq:wave} with $$\|u\|_{L^2}=\|Q\|_{L^2},  \ \ E(u)=E_0, \ \ P(u)=P_0,$$ which blows up at time $T=0$. More precisely, it holds that
\be
\label{neiovneone}
u(t,x)-\frac{1}{\l^{\frac12}(t)}Q\left(\frac{x-\alpha(t)}{\lambda(t)}\right)e^{i\gamma(t)}\to 0 \ \ \mbox{in} \ \ L^{2}(\R) \ \ \mbox{as} \ \ t\to 0^-,
\ee 
where 
$$
\lambda(t)=\l^* t^2 + \Or(t^3), \ \ \ \ \alpha(t) = \Or(t^3), \ \ \ \ \gamma(t)= \frac{1}{\l^*|t|} + \Or(t) , 
$$
with some constant $\l^* > 0$, and the blowup speed is given by:
$$
\| D^{\frac 1 2} u(t) \|_{L^2} \sim \frac{C(u_0)}{|t|} \ \ \mbox{as} \ \ t \to 0^-.
$$
\end{thm}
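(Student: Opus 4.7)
The plan is to follow the dynamical strategy of Rapha\"el and Szeftel~\cite{RaSz2011}, suitably modified to accommodate the nonlocal dispersion of~\eqref{eq:wave}. The three main ingredients will be: (i) the construction of an approximate minimal mass blowup profile via a modulation ansatz; (ii) a mixed energy/Morawetz monotonicity formula providing uniform a~priori bounds down to the singularity; and (iii) a backwards-in-time compactness argument yielding the actual solution.

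For the approximate profile, I would take
\[
u_{\rm app}(t,x) = \frac{1}{\lambda(t)^{1/2}}\bigl(Q + \eta(t)\bigr)\!\Bigl(\frac{x-\alpha(t)}{\lambda(t)}\Bigr)e^{i\gamma(t)},
\]
with $\lambda(t) = \lambda^* t^2 + O(t^3)$, $\gamma(t) = 1/(\lambda^*|t|) + O(t)$ and $\alpha(t) = O(t^3)$. Substituting into \eqref{eq:wave} and expanding in powers of $\lambda$, the leading term reproduces the ground state equation \eqref{neoenoneo}, while the subsequent orders force $\eta = \lambda^2 P_1 + \lambda^3 P_2 + \cdots$ to solve a hierarchy of inhomogeneous linear equations driven by the linearized operator $\mathcal{L} = D + 1 - 3Q^2$. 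The modulation laws are then tuned so that the solvability conditions (orthogonalities to $\ker \mathcal{L}$) are satisfied; the constant $\lambda^*$ is fixed by the energy $E_0$, and $P_0$ is recovered by adjusting an additional momentum phase in the ansatz, which is necessary here since the half-wave equation admits no Galilean symmetry.

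Next I would establish the key monotonicity formula. Because the sharp bound \eqref{eq:gwpbound} degenerates at the minimal mass level, energy conservation alone does not control $\|D^{1/2} u\|_{L^2}$. Following the philosophy of \cite{Me1990, RaSz2011}, I would build a Lyapunov functional $\mathcal{J}(u)$ combining the energy, a localized nonlocal Morawetz term adapted to $D$, and corrections involving the modulation parameters, and prove an inequality of the form
\[
\frac{d}{dt}\,\mathcal{J}(u(t)) \geq -\,\delta\,\|\eta(t)\|_{H^{1/2}}^2 + \text{higher order terms},
\]
which, combined with a spectral coercivity estimate on $\mathcal{L}$ at $Q$, yields backwards-in-time control of $\eta$ and $(\lambda,\alpha,\gamma)$ up to the blowup time. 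The solution is then obtained by choosing a sequence $t_n \to 0^-$, solving \eqref{eq:wave} backwards from Cauchy data $u_n(t_n) = u_{\rm app}(t_n)$ down to a fixed time $t^*<0$, and extracting an $L^2$-limit modulo the symmetries \eqref{eq:symmetry}; the three conservation laws identify the parameters of this limit as $M_*$, $E_0$, $P_0$, and the asymptotic laws for $(\lambda,\alpha,\gamma)$ transfer from the approximate profile.

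The main obstacle is the nonlocality of $D$. Unlike for NLS, there is no pointwise virial identity, so every step must be recast in terms of commutator estimates for $D$ while tracking the polynomial (rather than exponential) decay of the profiles $P_j$. Closing the Morawetz monotonicity and the coercivity of $\mathcal{L}$ under such slow decay, and doing so in a way that still allows $P_0$ to be prescribed intrinsically in the absence of Galilean invariance, is where the real work lies; the Frank--Lenzmann uniqueness theorem~\cite{FrLe2012} and the accompanying spectral description of $\mathcal{L}$ will be indispensable for this.
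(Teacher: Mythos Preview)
Your overall architecture matches the paper's: an approximate profile built by a modulation expansion, a mixed energy/localized-virial monotonicity in the spirit of \cite{RaSz2011}, backwards propagation of smallness from times $t_n\to 0^-$, and a compactness argument to pass to the limit. The paper organizes this into Sections~\ref{sec:profile}--\ref{sec:existence_min}, and your sketch lines up with that scaffolding.

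There is, however, a concrete gap in how you parametrize the profile. You propose $\eta=\lambda^2 P_1+\lambda^3 P_2+\cdots$ with inhomogeneities driven by $\mathcal L=D+1-3Q^2$. In the actual construction the expansion is \emph{not} in $\lambda$ but in a pair of slow modulation parameters $\mathcal P=(b,v)$, with $b$ coupled to scaling via $\lambda_s/\lambda=-b$ and $v$ to translation via $\alpha_s/\lambda=v$; the first correction is of order $b$ (hence $\sim\lambda^{1/2}$, not $\lambda^2$) and is purely imaginary, $R_{1,0}=[0,L_-^{-1}\Lambda Q]^\top$. The linearized operator is the \emph{matrix} $L=\mathrm{diag}(L_+,L_-)$, and the solvability obstructions at each order are resolved by adjusting the laws $b_s=-\tfrac12 b^2$, $v_s=-bv$ (Proposition~\ref{prop:Qb_existence}). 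An expansion directly in powers of $\lambda$ will not produce this hierarchy and will miss the $O(b)$ imaginary correction that generates the self-similar dynamics.

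The second gap is your handling of the momentum. Saying you will recover $P_0$ by ``adjusting an additional momentum phase'' does not work here: a Galilean phase $e^{i\beta x}$ is precisely the symmetry that is absent for the half-wave, and using it would not preserve the equation. The paper's device is to include the velocity $v$ as a genuine modulation parameter in the profile $Q_{\mathcal P}$ (with its own correction $R_{0,1}=[0,-L_-^{-1}\nabla Q]^\top$ and higher cross terms) and to show that $v\sim\lambda$ is slaved to the momentum via $P(Q_{\mathcal P})=p_1 v+O(\cdots)$ (Lemma~\ref{lem:Qb_properties}); $P_0$ is then prescribed through the initial choice of $v_n(t_n)$. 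Without this $v$-dependent profile you cannot impose $P_0\neq 0$, and more importantly the backwards bootstrap (Lemma~\ref{lem:back}) requires tracking $v/\lambda$ alongside $b/\lambda^{1/2}$. The rest of your plan---the Frank--Lenzmann nondegeneracy input, the localized nonlocal virial via commutator identities for $D$, and the compactness extraction---is exactly what the paper does.
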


{\it Comments on the result.}\\

{\it 1. Extension}: Similar questions can be addressed for the generalized $L^2$-mass critical problem 
\be \label{eq:fNLS}
i \partial_t u = D^s u - |u|^{2s} u, \ \ (t,x)\in \R\times \R,
\ee
with fractional power $1 < s < 2$. Since nondegeneracy (and uniqueness) of ground states is also known in this case (see \cite{FrLe2012}), we claim that our construction of minimal blow up solution carries over verbatim (except for some technicalities when the nonlinearity $|u|^{2s} u$ fails to be smooth). However, the case $s=1$ treated here is critical with respect to many aspects of the problem; in particular, the absence of any smoothing properties for the propagator $e^{-it D}$ is a delicate issue. For equation \eqref{eq:fNLS}, we claim that the associated minimal elements would concentrate an $L^2$ bubble \fref{neiovneone} at the speed $$\l(t)=\l^*|t|^{\frac{2}{s}}.$$ The analysis could also in principle be extended to the higher dimensional case, provided that the ground state are known to be nondegenerate; see \cite{FrLeSi2012} for a recent result in $N \geq 2$ space dimensions.\\

{\it 2. On minimal elements}: Theorem \ref{thmone} shows that scattering does not occur below the ground state. This is maybe not so surprising for the half-wave which is a one dimensional like wave equation. However, the variational setting for the construction of traveling waves with strictly subcritical mass \fref{nkonceno} can be adapted to the case $1<s<2$. This shows a major difference with the mass critical (NLS) with local dispersion $s=2$, and in particular that the sharp threshold for global existence and the sharp threshold for scattering are not the same.\\

{\it 3. Role of the momentum}: The construction of minimal elements with nonzero linear momentum is a nontrivial task, since equation \eqref{eq:wave} neither has  Galilean boost symmetry (which is an essential feature of \eqref{nls}) nor does our problem exhibit Lorentz boost symmetry (which occurs for classical nonlinear wave equation). To overcome this lack of symmetries to generate solutions of uniform motion, we construct {\it boosted ground state profiles}  for equation \eqref{eq:wave} by a suitable ansatz that incorporates a velocity parameter $v$ of uniform motion. Let us stress that these boosted ground states have indeed a strictly subcritical $L^2$-mass. As a consequence, the key is to compute the motion of the generalized boost parameter $v$ and to realize that in the regime, we are working with, it asymptotically vanishes sufficiently fast and hence does not perturb the concentration dynamics. A similar issue occurred in \cite{RaSz2011}.\\

{\it 4. Structure of the ground state}: An important qualitative difference between the local problem \fref{nls} and the nonlocal problem \fref{eq:wave} is the structure of the ground state solitary wave $Q$ which decays exponentially for \fref{nls}, while for the half-wave equation \eqref{eq:wave} the ground state exhibits a slow algebraic decay:
$$
Q(x) \sim \langle x \rangle^{-2} \ \ \mbox{as}\ \ |x|\to +\infty.
$$ 
Also, the linearized operator close to $Q$ displays a nonlocal dispersion, which makes the use of spectral estimates as in \cite{KrSc2009} particularly delicate. Here we will use two important facts. In \cite{FrLe2012}, despite the nonlocal structure of the problem, the quadratic form associated to the linearized Hamiltonian is proved to be {\it nondegenerate}, and this is in fact an important step of the proof of uniqueness of the ground state. This nondegeneracy itself is then an essential ingredient to adapt the strategy in \cite{{RaSz2011}} for the construction of minimal elements, which does not require any further spectral information --- like virial-type coercivity as in \cite{MeRa2005,MaMeRa2012a}-.\\

{\it 5. Bourgain--Wang solutions}: In \cite{BoWa1997}, Bourgain and Wang show that the minimal blowup element $S(t)$ given by \fref{defst} for the local problem \fref{nls} can be used to construct mass super critical blowup solutions whose singular part is given to leading order by $S(t)$, see also \cite{KrSc2009}. These solutions are shown to be unstable by ``log-log" blowup and scattering in \cite{MeRaSz2010}. The extension of this result to the case of the $L^2$-critical half-wave equation (i.\,e.~the construction of similar threshold dynamics based on the minimal element) is a natural question in the continuation of this work.\\

In the present work, our aim is to present a robust and self-contained construction of minimal blowup elements in a setting of nonlocal dispersion. Moreover, we believe that the arguments developed here will be of broader interest in the further understanding of blowup phenomena of PDE with fractional powers of the Laplacian.\\

There are three major questions in the continuation of this work. First, the question of uniqueness (modulo symmetries) of minimal mass blowup elements is a delicate open problem for equation \eqref{eq:wave}, and for which we further hope to extend the strategy developed in \cite{RaSz2011, MaMeRa2012a} to the half-wave problem. Second, one can ask for the behavior of the minimal blowup element on the left in time, and one typically expects that the minimal mass blowup element is a connection between scattering at $-\infty$ and blowup in finite time on the right. Again, this is a non trivial claim in the absence of an explicit formula like \fref{defst}, and the solutions of Theorem \ref{thmmain} are constructed locally in time only around blowup. This question relates directly to the description of the phase portrait of the flow around the ground state $Q$, and the understanding of threshold dynamics, see \cite{MeRaSz2010,MaMeRa2012a,MaMeRa2012b}. Finally, the understanding of the flow below the ground state mass in the presence of arbitrarily small solitary waves is a very interesting problem.\\

{\bf Notation and Definitions}. We use $D^s$ to denote the fractional derivative of order $s \geq 0$, i.\,e., we set $$\widehat{(D^s f)}(\xi) = |\xi|^{s} \widehat{f}(\xi).$$  We employ standard notation for $L^p$-spaces and we use $$(f,g) = \int \bar{f} g$$ as the inner product on $L^2(\R)$. We shall use $X \lesssim Y$ to denote that $X \leq CY$ holds, where the constant $C >0$ may change from line to line, but $C$ is allowed to depend on universally fixed quantities only. Likewise, we use $X \sim Y$ to denote that both $X \lesssim Y$ and $Y \lesssim X$ hold. Furthermore, we use $X \lesssim_\alpha Y$ to denote that $X \leq C_\alpha Y$ where the constant $C_\alpha >0$ is also allowed to depend on some quantity $\alpha$.

For a sufficiently regular function $f : \R \to \C$, we define the generator of $L^2$ scaling given by
\begin{equation*}
\Lambda f := \frac 1 2 f + x f'.
\end{equation*}
Note that the operator $\Lambda$ is skew-adjoint on $L^2(\R)$, i.\,e., we have
\begin{equation*}
(\Lambda f, g) = - (f, \Lambda g).
\end{equation*} 
We write $\Lambda^k f$, with $k\in \N$, for the iterates of $\Lambda$ with the convention that $\Lambda^0 f \equiv f$. In the following, we sometimes use the multi-variable calculus notation
$$
\nabla f = f', \quad \Delta f = f'' 
$$
for functions $f : \R \to \R$ to improve the readability of certain formulae derived below.

In some parts of this paper, it will be convenient to identify any complex-valued function $f : \R \to \C$ with the function $\bm{f} : \R \to \R^2$ by setting
\begin{equation*}
\bm{f} = \left [ { f_1 \atop f_2 } \right  ] = \left [ {\Re f \atop \Im f} \right ] .
\end{equation*} 
Correspondingly, we will identify the multiplication by $i$ in $\C$ with the multiplication by the real $2 \times 2$-matrix defined as
\begin{equation*}
J = \left [ \begin{array}{cc} 0 & -1 \\ 1 & 0 \end{array} \right ] .
\end{equation*}
In what follows, regularity properties such as $f \in H^k(\R)$ (as a $\C$-valued function) are obviously equivalent to saying that $\bm{f} \in H^k(\R)$ (as a $\R^2$-valued function). Furthermore, the action of differential operators (such as $\nabla$, $\Lambda$ and $D^s$ etc.) on $\bm{f}$ is defined in a self-evident fashion. 

Throughout this paper, we denote the linearized operator (with respect to complex-valued functions) close to the ground state $Q$ by
$$
L = \left [ \begin{array}{cc} L_+ & 0 \\ 0 & L_- \end{array} \right ] ,  
$$ 
with the scalar self-adjoint operators 
$$
L_+ = D + 1 - 3Q^2, \quad L_-=D + 1 - Q^2,
$$
acting on $L^2(\R; \R)$. 

\subsection*{Acknowledgments}
J.\,K.~acknowledges support from the Swiss National Science Foundation (SNF). Part of this work was done while E.\,L.~was partially supported by a Steno research fellowship from the Danish Research Council. P.\,R. is supported by the French ERC/ANR project SWAP and the ERC advanced grant BLOWDISOL. Part of this work was done while P.\,R. was visiting the Department of Mathematics at MIT, which he would like to thank for its kind hospitality.

\section{Traveling Solitary Waves with Subcritical Mass}
\label{sec:Qboost}

In this section we prove Theorem \ref{thmone}, which establishes the existence and properties of traveling solitary waves for \eqref{eq:wave}. In particular, we will see that traveling solitary waves with arbitrarily small $L^2$-mass exist, which is in striking contrast to the $L^2$-critical NLS.

\subsection{Preliminaries}

Let $v \in \R$ with $|v| < 1$ be given. By making the ansatz $u(t,x) = e^{it} Q_v(x-vt)$ for \eqref{eq:wave}, we find that the profile $Q_v \in H^{1/2}(\R)$ has to satisfy
\be \label{eq:Qboost}
D Q_v + Q_v+ i (v \cdot \nabla) Q_v - |Q_v|^2 Q_v = 0. 
\ee
Following an idea in \cite{FrJoLe2007}, we obtain nontrivial solutions $Q_v \in H^{1/2}(\R)$ as optimizers for the interpolation inequality
\be \label{ineq:GN}
\int |u|^4 \leq C_v \left ( \int \overline{u} D u + \overline{u} (i v \cdot \nabla u)  \right ) \left ( \int |u|^2 \right ) .
\ee 
Note that $|v| < 1$ is needed to ensure that $\int \overline{u} D u + \overline{u} (iv \cdot \nabla u) > 0$ for $u \not \equiv 0$. Here $C_v >0$ denotes the optimal constant given by
\be \label{eq:wein}
\frac{1}{C_v} = \inf_{u \in H^{1/2}(\R) \setminus \{ 0 \}} \frac{ \left ( \int \overline{u} D u + \overline{u} (i v \cdot \nabla u)  \right ) \left ( \int |u|^2 \right )}{\int |u|^4} .
\ee
By Sobolev inequalities, we see that the infimum on the right is strictly positive (and hence $C_v < +\infty$ is finite). Furthermore, the fact this infimum is indeed attained can be deduced from concentration-compactness arguments, which in our case follow from a direct adaptation of the proof given in \cite[Appendix B]{FrJoLe2007}. In particular, optimizers $Q_v \in H^{1/2}(\R)$ for \eqref{ineq:GN} exist and after a suitable rescaling $Q_v(x) \mapsto a Q_v(bx)$ with $a, b > 0$ they are found to satisfy equation \eqref{eq:Qboost}. Following the terminology introduced in \cite{FrJoLe2007}, we refer to such optimizers $Q_v(x)$ that solve equation \eqref{eq:Qboost} as {\em boosted ground states} (with velocity $v$) in what follows. In particular, the unboosted ground states $Q_{v=0}(x)=Q(x)$ is the unique (modulo symmetries) ground state solving \eqref{neoenoneo} above. Finally, we observe that
\be \label{eq:CvQ}
\frac{2}{C_v} = \int |Q_v|^2 ,
\ee
which follows from the fact $Q_v$ optimizes \eqref{ineq:GN} and satisfies equation \eqref{eq:Qboost}; see more details on this relation for a similar problem treated in \cite{FrJoLe2007}. In particular, the relation \eqref{eq:CvQ} shows that two different boosted ground states $Q_v$ and $\widetilde{Q}_v$ with the same velocity $v$ must satisfy $\| Q_v \|_{L^2} = \| \widetilde{Q}_v \|_{L^2}$.\\
We may reformulate \fref{eq:CvQ} as follows. Let the energy functional 
$$
\cE_v(u) = \frac 1 2 \int \overline{u} D u + \frac{1}{2} \int \overline{u} (i v \cdot \nabla u) - \frac 1 4 \int |u|^4,
$$
then\footnote{as follows from a standard Pohozaev integration by parts on \fref{eq:Qboost}.} 
\be
\label{cnonenoe}\cE_v(Q_v)=0
\ee
and there holds the sharp Gagliardo-Nirenberg interpolation inequality:
\be 
\label{sahrpgagl}
\forall u\in H^{\frac 12},  \ \ \cE_v(u) \geq \frac{1}{2} \left ( \int \left \{ \overline{u} D u + \overline{u} (i v \cdot \nabla u) \right \} \right ) \left ( 1 - \frac{ \| u \|_{L^2}^2}{\| Q_v \|_{L^2}^2} \right ).
\ee

\subsection{Proof of Theorem \ref{thmone}}

Let $v \in \R$ with $|v| < 1$ be given. From the previous paragraph we know that boosted ground states $Q_v$ satisfying equation \eqref{eq:Qboost} exist. Due to the behavior of the problem under spatial reflections $x \mapsto -x$, we can assume without loss generality that all velocities are positive numbers, i.\,e.,
\be
0 \leq v < 1.
\ee

{\bf step1} Sign of the momentum. Let $0\leq v<1$. We claim: 
\be \label{eq:mom}
v \cdot \int \overline{Q_v} (i \nabla Q_v) \leq 0 .
\ee
Indeed, assume on the contrary that $v \cdot \int \overline{Q_v} (i \nabla Q_v) > 0$ holds. We define the reflected function $\widetilde{Q}_v(x) := Q_v(-x)$. Note that $\int |\widetilde{Q}_v|^2 = \int |Q_v|^2$ and $v \cdot \int \overline{\widetilde{Q}_v} (i \nabla \widetilde{Q}_v) < 0$. Since the remaining terms in $\cE_v(u)$ are invariant with respect to space reflections, we find that $\cE_v(\widetilde{Q}_v) < \cE_v(Q_v)=0$. But $\|\widetilde{Q}_v\|_{L^2}=\|Q\|_{L^2}$ implies $\cE_v(\widetilde{Q}_v)\geq 0$ from \fref{sahrpgagl}, contradiction. We conclude that \eqref{eq:mom} holds. In particular, we see that
\be \label{eq:mom2} 
\int \overline{Q_v} (i \nabla Q_v) \leq 0 \quad \mbox{for $0 < v < 1$}. 
\ee
For the case $v=0$, we recall the fact from \cite{FrLe2012} that (after translation and shift by a complex constant phase) the functions $Q_{v=0}=Q_{v=0}(|x|)$ is even. Hence, in this special case, we have
\be \label{eq:mom3}
\int \overline{Q_{v=0}} (i \nabla Q_{v=0}) = 0.
\ee

{\bf step 2} The mass is non increasing. We claim the monotoncity:
\be
\label{cneonoenvjojeo}
\| Q_{v_2} \|_{L^2} < \| Q_{v_1} \|_{L^2}\ \ \mbox{for}\ \ 0\leq v_1<v_2<1.
\ee Note that this implies in particular the subcritical mass property: $$\|Q_v\|_{L^2}<\|Q\|_{L^2}\ \ \mbox{for}\ \ 0<v<1.$$
Indeed, let $Q_{v_1}$ and $Q_{v_2}$ be two boosted ground states satisfying \eqref{eq:Qboost} with $v=v_1$ and $v=v_2$, respectively. Since $\cE_{v_1}(Q_{v_1}) = 0$ by \eqref{cnonenoe}, we find using \eqref{eq:mom2} if $v_1 > 0$ and \eqref{eq:mom3} if $v_1=0$ that
\begin{align*}
\cE_{v_2}(Q_{v_1}) & = \cE_{v_1}(Q_{v_1}) + (v_2-v_1) \cdot \int \overline{Q_{v_1}} (i \nabla Q_{v_1}) \leq 0,
\end{align*}
since $v_2-v_1 > 0$ by assumption, which together with \fref{sahrpgagl} implies $\|Q_{v_1}\|_{L^2}\geq \|Q_{v_2}\|_{L^2}$. In case of equality $\|Q_{v_1}\|_{L^2}= \|Q_{v_2}\|_{L^2}$, $Q_{v_1}$ attains the minimization problem \fref{eq:wein} with $v_2$. In particular, the function $Q_{v_1}$ satisfies the equation
\be
D Q_{v_1} + \lambda Q_{v_1} + v_2 \cdot \nabla Q_{v_1} - |Q_{v_1}|^2 Q_{v_1} = 0,
\ee
with some Lagrange multiplier $\lambda \in \R$. On the other hand, by assumption, the boosted ground state $Q_{v_1}$ also satisfies equation \eqref{eq:Qboost} with $v=v_1$. By subtracting the equations satisfied by $Q_{v_1}$, we obtain that
\be
(\lambda- 1) Q_{v_1} + (v_2-v_1) \cdot \nabla Q_{v_1} = 0.
\ee
Since $v_2 \neq v_1$ by assumption and $Q_{v_1}(x) \to 0$ as $|x| \to +\infty$, we deduce from this equation that $Q_{v_1} \equiv 0$ holds, which is absurd.\\

{\bf step 3} Limits. We claim:
$$\left\{\begin{array}{ll} \| Q_v \|_{L^2} \to  \| Q \|_{L^2} & \ \ \mbox{as $|v| \to 0$},\\
\|Q_v\|_{L^2}\to 0 & \ \  \mbox{as $|v| \to 1$}.\end{array}\right.
$$
To show that $\| Q_v \|_{L^2} \to \| Q \|_{L^2}$ as $v \to 0$, we argue as follows. From $|Ê\xi | - v \cdot \xi \geq (1-|v|) | \xi |$ for $\xi \in \R$ and Plancherel's identity, we deduce that $C_v \leq (1-|v|)^{-1} C_{v=0}$ for the optimal constants in \eqref{ineq:GN}. From this simple bound and recalling \eqref{eq:CvQ} and the monotonicity \fref{cneonoenvjojeo}, we deduce the bounds
$$
 \sqrt{1-|v|} \| Q \|_{L^2} \leq \| Q_v \|_{L^2} \leq \| Q \|_{L^2} .
$$
whence it follows that $\| Q_v \|_{L^2} \to \| Q \|_{L^2}$ as $v \to 0$.

It remains to show that $\|Q_v \|_{L^2} \to 0$ as $|v| \to 1$. It suffices to prove this claim for $v \to 1$, since $v \to -1$ can be treated in a verbatim way. Let $\varphi \in H^{1/2}(\R)$ with $\varphi \not \equiv 0$ have only positive Fourier components, i.\,e., we assume that $\mathrm{supp} \, \widehat{\varphi} \subset [0, +\infty)$ holds. For $v > 0$, this gives us
\be 
(D + i v \cdot \nabla) \varphi = (1-v) D \varphi. 
\ee
From \eqref{ineq:GN} we obtain that
\be
C_v \geq \left ( \frac{1}{1-v} \right ) \left ( \frac{\int |\varphi|^4 }{\left ( \int \overline{\varphi} D \varphi  \right ) \left ( \int |\varphi|^2 \right ) } \right ) .
\ee
Therefore $C_v \to +\infty$ as $v \to 1$. In view of \eqref{eq:CvQ}, this shows that $\| Q_v \|_{L^2} \to 0$ as $v \to 1$.\\
The proof of Theorem \ref{thmone} is now complete. \hfill $\blacksquare$

\begin{remark} \label{rem:QvCV}
By uniqueness of the ground state $Q$ and a concentration-compactness argument, one can show from standard arguments that if $v_n \to 0$ then (after possibly passing to a subsequence):
\be \label{eq:QvCVgood}
 \mbox{$e^{i \gamma_n} Q_{v_n}( \cdot + y_n) \to Q$ in $H^{1/2}(\R)$ as $n \to +\infty$},
\ee
for some sequences $\{ \gamma_n \}_{n=1}^\infty, \{ y_n \}_{n=1}^\infty$ in $\R$.
\end{remark}

For the reader's convenience, we sketch the arguments showing the convergence claim \eqref{eq:QvCVgood} above. For $|v| < 1$, we define the functional
\be
\cJ_v(u) = \frac{ \left ( \int \overline{u} D u + \overline{u} (i v \cdot \nabla u)  \right ) \left ( \int |u|^2 \right )}{\int |u|^4}   ,
\ee
for $u \in H^{1/2}(\R)$ with $u \not \equiv 0$.  Adapting the proof in \cite[Appendix B]{FrJoLe2007}, we see that every minimizing sequence for $\cJ_v(u)$ is relatively compact in $H^{1/2}(\R)$ up to translations and scalings. Moreover, as shown in \cite{FrLe2012}, the functional $\cJ_{v=0}(u)$ has a unique (modulo symmetries) minimizer $Q$, which is the unique ground state solution satisfying \eqref{neoenoneo}. Therefore if $\{ u_n \}_{n=1}^\infty \subset H^{1/2}(\R) \setminus \{ 0 \}$ is a minimizing sequence for $\cJ_{v=0}(u)$, then (after passing to a subsequence if necessary):
\be \label{eq:relcom}
\mbox{$a_n u_n( b_n( \cdot + y_n ) ) \to Q$ in $H^{1/2}(\R)$ as $n \to +\infty$},
\ee
for some sequences $\{ a_n \}_{n=1}^\infty \subset \C \setminus \{ 0 \}$, $\{ b_n \}_{n=1}^\infty \subset \R \setminus \{ 0 \}$ and $\{ y_n \}_{n=1}^\infty \subset \R$.

Now, we suppose that $v_n \to 0$ and let $\{ Q_{v_n} \}_{n=1}^\infty$ be a sequence of boosted ground states. Note that
$$
\cJ_{v=0}(Q_{v_n}) = \frac{ \int \overline{Q_{v_n}} D Q_{v_n}}{ \int \left (\overline{Q_{v_n}} D Q_{v_n} + \overline{Q_{v_n}} (i v \cdot \nabla Q_{v_n}) \right ) } \cJ_{v_n}(Q_{v_n})  \leq \frac{1}{1-|v_n|} \frac{2}{\| Q_v \|_{L^2}^2} ,
$$
using that $|\xi| - v\cdot \xi \geq (1-|v|) |\xi|$ and that $Q_v$ minimizes $\cJ_{v_n}(u)$ and \eqref{eq:CvQ}. On the other hand, we have the obvious lower bound $J_{v=0}(Q_{v_n}) \geq  J(Q) = 2/\| Q \|_{L^2}^2$. Since $\| Q_v \|_{L^2} \to \| Q \|_{L^2}$ as $v \to 0$, we conclude that
$$
\cJ_{v=0}(Q_{v_n}) \to \cJ_{v=0}(Q) \ \ \mbox{as} \ \ n \to +\infty. 
$$
Therefore $\{ Q_{v_n} \}_{n=1}^\infty$ furnishes a minimizing sequence for $\cJ_{v=0}(u)$. From \eqref{eq:relcom} and using the normalization constraints satisfied by $Q_{v_n}$ (to see that $|a_n| = |b_n| =1$), we deduce that \eqref{eq:QvCVgood} holds true.


\section{Sketch of the Proof of Theorem \ref{thmmain}}


Before we start our analysis, let us make some formal remarks. To construct minimal mass blowup solutions for problem \eqref{eq:wave}, we first renormalize the flow
$$u(t,x) = \frac{1}{\lambda^{\frac 1 2}(t)} v\left (t, \frac{x- \alpha(t)}{\lambda(t)} \right ) e^{i \gamma(t)} ,
$$
which leads the renormalized equation:
\be
\label{equation*}
i \partial_s v - D v - v + v |v|^2  = i \frac{\lambda_s}{\lambda} \Lambda v + i \frac{\alpha_s}{\lambda} \cdot \nabla v + \tgamma_s v.
\ee
Following the slow modulated ansatz strategy developed in \cite{MeRa2003,RaSz2011,KrMaRa2009}, we freeze the modulation equations $$-\lsl=b, \ \ \frac{\alpha_s}{\lambda}=v, $$and we look for an approximate solution of the form: $$v(s,y)=P_{\matchal P(s)}, \ \ \matchal P(s)=(b(s),v(s))$$ with an expansion:
 $$b_s=P_1(b,v), \ \ v_s=P_2(b,v), \ \ Q_\mathcal P=Q(y)+\Sigma_{|\alpha|+\beta\geq 1}v^{\alpha}b^{\beta}P_{\alpha,\beta}(y).$$ Each step requires inverting an elliptic system of the form of the form $Lu=f$, where $L=(L_+,L_-)$ is the matrix linearized operator close to $Q$ which displays a nontrivial kernel induced by the symmetry group. We adjust the modulation equation for $(b_s,v_s)$ to ensure the solvability of the obtained system, and a specific algebra leads to the laws to leading order: 
 $$b_s=-\frac12b^2, \ \ v_s=-b v.$$ This allows us to construct a high order approximation $Q_{\matchal P}$ solution to 
\begin{equation*}
- i \frac{b^2}{2}  \partial_b \Qp - i bv \partial_v \Qp  - D \Qp - \Qp + i b \Lambda \Qp - i v \cdot \nabla \Qp + |\Qp|^2 \Qp =  -\Psi_\Pa,
\end{equation*}
where $\Psi_\Pa = \Or(b^5 + v \Pa^2)$ is some small and well-localized error term. Furthermore, we have that the $\Qp$ has almost minimal mass in the sense that
\begin{equation*}
\int |\Qp|^2 = \int Q^2 + \Or(b^4 + v^2 + v^2 \Pa).
\end{equation*}
We now aim at constructing an exact solution of the form $$u(t,x) = \frac{1}{\lambda^{\frac 1 2}(t)} \left[Q_{\mathcal P(t)}+\e\right]v\left (t, \frac{x- \alpha(t)}{\lambda(t)} \right ) e^{i \gamma(t)} ,
$$
and this amounts propagating suitable dispersive estimates for $\e$. Here, a key ingredient will be a backwards monotonicity mixed energy/virial estimate which schematically yields the bound
\begin{equation*}
\frac{d}{dt} \left \{  \frac{1}{\l}\left[\| D^{\frac 1 2} \e \|_{L^2}^2 + \| \tilde{\e} \|^2_{L^2}  + b \Im \left ( \int_{ |y| \lesssim 1} y \cdot \nabla \tilde{\e} \overline{\tilde{\e}} \right )\right] \right  \} \geq 0 + \mbox{lower order terms},
\end{equation*}
where the monotonicity {\it in the critical mass regime} relies on the coercivity of the linearized energy only.
Using the above backwards monotonicity, we can bootstrap and apply a soft compactness argument to construct solutions of the form above such that
\begin{equation*}
\lambda \sim t^2, \quad b \sim t, \quad v \sim t^2, \quad \| \eps(t) \|_{H^{1/2}}^2 \sim t^2.
\end{equation*}
In particular, we deduce that the blowup solutions have minimal mass $\| u_0 \|_{L^2} = \| Q \|_{L^2}$, energy $E(u_0) = E_0$, momentum $P(u_0) = P_0$, and a blowup rate given by
\begin{equation*}
 \| D^{\frac 1 2} u(t) \|_{L^2} \sim \frac{C(u_0)}{|t|} \ \ \mbox{as} \ \ t \to 0^-.
\end{equation*}

In the following Sections \ref{sec:profile}--\ref{sec:existence_min}, we will implement the strategy sketched above. Finally, in Section \ref{sec:existence_min} below, we will state and prove Theorem \ref{thm:exist}, which in particular yields Theorem \ref{thmmain}.

\section{Approximate Blowup Profile}
\label{sec:profile}
This section is devoted to the construction of the approximate blowup profile $\Qp$ with parameters $\Pa = (b,v)$. In what follows, it will be convenient to identify a complex-valued function $f : \R \to \C$ with the function $\bm{f} : \R \to \R^2$ through $\bm{f} = [\Re f, \Im f]^\top$, as we have already mentioned above. 
Correspondingly, we will identify the multiplication by $i$ in $\C$ with the multiplication by the real $2 \times 2$-matrix
\begin{equation*}
J = \left [ \begin{array}{cc} 0 & -1 \\ 1 & 0 \end{array} \right ] .
\end{equation*}

Employing this notation, we have the following result about an approximate blowup profile $\Qbp$, parameterized by $\Pa=(b,v)$, around the ground state $\bm{Q} = [Q, 0]^\top$.

\begin{prop}[Approximate Blowup Profile] \label{prop:Qb_existence}
Let $\Pa=(b,v) \in \R \times \R$. There exists a smooth function $\Qbp= \Qbp(x)$ of the form
\begin{equation} \label{eq:Qbansatz}
\Qbp = \bm{Q} + b \bm{R}_{1,0} + v \bm{R}_{0,1} + bv \bm{R}_{1,1} + b^2 \bm{R}_{2,0} + v^2 \bm{R}_{0,2} + b^3 \bm{R}_{3,0} + b^2 v \bm{R}_{2,1} + b^4 \bm{R}_{4,0}
\end{equation}
that  satisfies the equation
\begin{equation} \label{eq:Qbdef}
- J \frac{1}{2} b^2 \partial_b \Qbp - J bv \partial_v \Qbp - D \Qbp - \Qbp + J b \Lambda \Qbp - J v \cdot \nabla \Qbp + |\Qbp|^2 \Qbp = - \bm{\Psi}_\Pa.
\end{equation}
Here, the functions $\{ \bm{R}_{k, \ell} \}_{0 \leq k \leq 3, 0 \leq \ell \leq 1}$ satisfy the following regularity and decay bounds:
\begin{equation} \label{ineq:Qbbound1}
\| \bm{R}_{k, \ell} \|_{H^m} + \| \Lambda \bm{R}_{k,\ell} \|_{H^m} + \| \Lambda^2 \bm{R}_{k, \ell} \|_{H^m} \lesssim_m 1,  \quad \mbox{for $m \in \N$},
\end{equation}
\begin{equation} \label{ineq:Qbbound2}
| \bm{R}_{k,\ell}(x) | + | \Lambda \bm{R}_{k, \ell}(x)| + | \Lambda^2 \bm{R}_{k,\ell}(x)| \lesssim \langle x \rangle^{-2}, \quad \mbox{for $x \in \R$}.
\end{equation}
Moreover, the term on the right-hand side in \eqref{eq:Qbdef} satisfies
\begin{equation} \label{ineq:Qberror}
\| \bm{\Psi}_\Pa \|_{H^m} \lesssim_m \Or \left (b^5 +  v^2 \Pa \right ), \quad |\nabla^k \bm{\Psi}_\Pa(x)| \lesssim \Or \left ( b^5 +  v^2 \Pa \right ) \langle x \rangle^{-2} ,
\end{equation}
for $m \in \N$ and $x \in \R$.
\end{prop}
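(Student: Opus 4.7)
The plan is to substitute the polynomial ansatz \eqref{eq:Qbansatz} into \eqref{eq:Qbdef}, expand the cubic nonlinearity around $\bm{Q}$, and collect terms by powers of $b^k v^\ell$. The ground state equation $D\bm{Q}+\bm{Q}-|\bm{Q}|^2\bm{Q}=0$ cancels the $(0,0)$-contribution, and at each subsequent order the system reduces to a linear equation
\begin{equation*}
L \bm{R}_{k,\ell} = \bm{F}_{k,\ell},
\end{equation*}
where the source $\bm{F}_{k,\ell}$ depends only on the previously constructed profiles through the operators $Jb\Lambda$, $-Jv\cdot \nabla$ and the cubic nonlinearity. The modulation laws $b_s=-b^2/2$ and $v_s=-bv$ are already hard-coded in \eqref{eq:Qbdef} through the transport operators $-\tfrac{1}{2}Jb^2 \partial_b$ and $-Jbv\partial_v$, which feed higher-order sources in a way carefully designed to eliminate the forthcoming obstructions to solvability.

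\textbf{Solving the hierarchy.} Solvability at each step is a Fredholm condition against $\ker L = \mathrm{span}\{[\nabla Q,0]^\top, [0,Q]^\top\}$, whose nondegeneracy in this precise form is the key result of \cite{FrLe2012}. At first order, one checks that the sources $J\Lambda\bm{Q}$ and $-J\nabla\bm{Q}$ lie in $(\ker L)^\perp$ using $\int (\Lambda Q)\, Q = 0$ (invariance of the $L^2$-mass under $\Lambda$) and $\int (\nabla Q)\, Q = 0$; inverting $L$ then produces $\bm{R}_{1,0}$ and $\bm{R}_{0,1}$, where one crucially uses the scaling identity $L_+ \Lambda Q = -Q$ and the translation identity $L_+ \nabla Q = 0$. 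At higher orders the Fredholm conditions either hold automatically by parity or are precisely the algebraic relations that single out $b_s=-b^2/2$ and $v_s=-bv$ as the correct modulation laws; once the source is projected onto $(\ker L)^\perp$, the Frank--Lenzmann nondegeneracy yields a unique profile $\bm{R}_{k,\ell}$ modulo the kernel, and one fixes the free kernel coefficients by orthogonality conventions. Truncating the ansatz at $b^4$, $b^2 v$ and $v^2$ and collecting all remaining terms into $\bm{\Psi}_\Pa$ leaves precisely a residual of size $\Or(b^5 + v^2 \Pa)$.

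\textbf{Regularity, decay, and main obstacle.} For the estimates \eqref{ineq:Qbbound1}--\eqref{ineq:Qbbound2} the key input is the sharp algebraic decay $Q(x)\sim \langle x\rangle^{-2}$ from \cite{FrLe2012}, which propagates to all derivatives and to any polynomial expression in $Q$ and its translates, so that each source $\bm{F}_{k,\ell}$ is smooth and satisfies a $\langle x\rangle^{-2}$ bound. The true difficulty --- and the point where the nonlocal nature of the problem really bites --- is to propagate this decay under $L^{-1}$. Since $L_\pm = D + 1 - cQ^2$ is a first-order nonlocal operator, one cannot invoke ODE arguments as in the local NLS setting; instead, one relies on resolvent estimates for $L$ in weighted $L^2$ spaces together with its nondegeneracy to show that $L^{-1}$ restricted to $(\ker L)^\perp$ maps functions with $\langle x\rangle^{-2}$ decay and $H^m$ regularity into the same class for every $m\in\N$. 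Iterating through the finite hierarchy produces the profile bounds \eqref{ineq:Qbbound1}--\eqref{ineq:Qbbound2}, and the error bound \eqref{ineq:Qberror} then follows by direct inspection of the truncated terms, using that $\Lambda$ preserves these weighted $H^m$ classes.
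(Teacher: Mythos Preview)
Your outline matches the paper's approach closely: a hierarchical expansion producing equations $L\bm{R}_{k,\ell}=\bm{F}_{k,\ell}$, solvability via the Fredholm alternative and the kernel structure from \cite{FrLe2012}, and decay/regularity propagated through $L^{-1}$ using the resolvent bounds of Lemma~\ref{lem:decay}. Two points, however, deserve more than the gloss you give them.

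First, the solvability conditions at orders $\Or(bv)$ and especially $\Or(b^3)$ are \emph{not} trivially ``the relations that single out the modulation laws.'' The laws $b_s=-b^2/2$ and $v_s=-bv$ are already hard-coded in \eqref{eq:Qbdef}; what remains is to verify, by explicit computation, that the resulting sources actually lie in $(\ker L)^\perp$. At $\Or(b^3)$ this reduces to the identity $(Q,T_2)+\tfrac12(S_1,S_1)=0$ with $L_+T_2=\tfrac12 S_1-\Lambda S_1+S_1^2 Q$, and the paper proves it via the commutator $[L_-,\Lambda]=D+2xQ'Q$ together with $L_+\Lambda Q=-Q$. A similar commutator computation handles $\Or(bv)$. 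These are the genuine algebraic cores of the construction; without them the hierarchy does not close, and your sketch does not indicate you know how to carry them out.

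Second, the claim that ``$\Lambda$ preserves these weighted $H^m$ classes'' is false as stated: $\Lambda f=\tfrac12 f+xf'$ need not lie in $H^m$ for $f\in H^m$ without decay control on $f'$, and conversely $xf'$ does not obviously inherit $\langle x\rangle^{-2}$ decay. The paper instead applies $\Lambda$ to the equation $L_- S_1=\Lambda Q$, uses $[L_-,\Lambda]$ to derive an equation for $\Lambda S_1$ of the same shape (up to a kernel correction), and then re-invokes Lemma~\ref{lem:decay}. This iteration is what yields \eqref{ineq:Qbbound1}--\eqref{ineq:Qbbound2}, not an abstract preservation property.
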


\begin{remark}
The proof of Proposition \ref{prop:Qb_existence} will actually show that the functions $\{ \bm{R}_{k, \ell} \}$ have the following symmetry structure:
$$
\bm{R}_{1,0} = \left [ {0 \atop even} \right ], \quad \bm{R}_{0,1} = \left [ {0 \atop odd} \right ], \quad \bm{R}_{1,1} = \left [ {odd \atop 0} \right ],
$$
$$
\bm{R}_{2,0} = \left [ {even \atop 0} \right ], \quad \bm{R}_{0,2} = \left [ {even \atop 0} \right ] , \quad \bm{R}_{3,0} = \left [ {0 \atop even} \right ],
$$
$$
\bm{R}_{2,1} = \left [ {0 \atop odd} \right ],  \quad \bm{R}_{4,0} = \left [ {even \atop 0} \right ] . 
$$
These symmetry properties will be of essential use throughout the following.
\end{remark}

\begin{proof}
We recall the definition of the linear operator 
\begin{equation}
L = \left [ \begin{array}{cc} L_+ & 0 \\ 0 & L_- \end{array} \right ] 
\end{equation} 
acting on $L^2(\R; \R^2)$, where $L_+$ and $L_-$ denote the unbounded operators acting on $L^2(\R;Ê\R)$ given by
\begin{equation}
L_+ = D +1 - 3Q^2, \quad L_- = D+1 - Q^2 .
\end{equation}
From [FrLe] we have the key property that the kernel of $L$ is given by
\begin{equation}
\mathrm{ker} \, L = \mathrm{span} \, \left \{  \left [ { \nabla Q \atop 0 } \right ] , \left [ { 0 \atop Q } \right ] \right \}. 
\end{equation}
Note also that the bounded inverse $L^{-1}=\mathrm{diag}(L_+^{-1}, L_-^{-1})$ exists on the orthogonal complement $\{ \mathrm{ker} \, L \}^\perp = \{ \nabla Q \}^\perp \oplus \{ Q \}^\perp$.

Next, let $\Qbp$ be given by \eqref{eq:Qbansatz} with the functions $\{ \bm{R}_{k,\ell} \}$ to be determined such that
\begin{equation*}
\mbox{LHS of \eqref{eq:Qbdef}} = \Or \left (b^5 + v^2 \Pa \right ) .
\end{equation*}
We divide the rest of the proof of Proposition \ref{prop:Qb_existence} as follows.

\medskip
{\bf Step 1: Determining the functions $\{ \bm{R}_{k,\ell} \}$.} 

We discuss our ansatz for $\Qbp$ to solve \eqref{eq:Qbdef} order by order. The proof of the regularity and decay bounds for the functions $\{ \bm{R}_{k, \ell} \}$ will be given further below (which, in particular, will guarantee that the following calculations are rigorous).

\medskip
{\bf Order $\Or(1)$:} Clearly, we have that 
$$
D \bm{Q} + \bm{Q} - |\bm{Q}|^2 \bm{Q} = 0,
$$
since $\bm{Q} = [ Q, 0]^\top$ with $Q=Q(|x|)>0$ being the ground state solution.

\medskip
{\bf Order $\Or(b)$:} We obtain the equation
\begin{equation}
L \bm{R}_{1,0} = J \Lambda \bm{ Q }.
\end{equation}
Note that $J \Lambda \bm{Q} = [ 0, \Lambda Q]^\top$ satisfies $J \Lambda \bm{Q} \perp \mathrm{ker} \, L$ due to the fact that $(\Lambda Q, Q) =0$, which can be easily seen by using the $L^2$-criticality. Hence we can find a unique solution $\bm{R}_{1,0} \perp \mathrm{ker} \, L$ to the equation above. In what follows, we denote 
\begin{equation}
\bm{R}_{1,0} = L^{-1}  J \Lambda \bm{Q}= \left [ { 0 \atop L_-^{-1} \Lambda Q} \right ] . 
\end{equation}

\medskip
 {\bf Order $\Or(v)$:} Here we need to solve 
 \begin{equation}
 L \bm{R}_{0,1} = - J \nabla  \bm{Q} .
 \end{equation}
We observe the orthogonality $J \nabla \bm{Q} = [0, \nabla Q]^\top \perp \mathrm{ker} \, L$, since $(\nabla Q, Q) = 0$ holds. Thus there is a unique solution $\bm{R}_{0,1} \perp \mathrm{ker} \, L$, which we denote as
\begin{equation}
\bm{R}_{0,1} = -L^{-1} J \nabla \bm{Q} = \left [ { 0 \atop -L_-^{-1} \nabla Q } \right ] .
\end{equation} 

\medskip
{\bf Order $\Or(bv)$:} First, we note that $\bm{Q} \cdot \bm{R}_{1,0} = \bm{Q} \cdot \bm{R}_{0,1} = 0$. Using this, we find that $\bm{R}_{1,1}$ has to solve the equation
\begin{equation} \label{eq:R11}
L \bm{R}_{1,1} = - J \bm{R}_{0,1} + J \Lambda \bm{R}_{0,1}  - J \nabla \bm{R}_{1,0} + 2 (\bm{R}_{1,0} \cdot \bm{R}_{0,1}) \bm{Q} .
\end{equation}
Now, we claim that 
\begin{equation} \label{eq:R11ortho}
\mbox{RHS of \eqref{eq:R11}} \perp \mathrm{ker} \, L .
\end{equation}
Indeed, we note that 
\begin{equation} \label{eq:R10_sol}
\bm{R}_{1,0} = \left [ {0 \atop S_1} \right ], \quad \mbox{with $L_- S_1 = \Lambda Q$},
\end{equation}
\begin{equation} \label{eq:R11_sol}
 \bm{R}_{0,1} = \left [ {0 \atop G_1} \right ], \quad \mbox{with $L_- G_1 = -\nabla Q$}. 
\end{equation}
Therefore the condition \eqref{eq:R11ortho} is equivalent to
\begin{equation} \label{eq:R11_magic}
(\nabla Q, G_1) - (\nabla Q, \Lambda G_1)  + (\nabla Q, \nabla S_1) + 2(\nabla Q, S_1 G_1 Q) = 0.
\end{equation}
To see that this holds true, we argue as follows. Using the commutator formula $[\Lambda, \nabla] = -\nabla$ and integrating by parts, we obtain
\begin{align} 
- (\nabla Q, \Lambda G_1) & = (\Lambda \nabla Q, G_1 ) = ( \nabla  \Lambda Q, G_1) - (\nabla Q, G_1 ) \nonumber \\
& = (\nabla L_- S_1, G_1 ) - (\nabla Q, G_1 ), \label{eq:R11_magic1}
\end{align}
Next, since $L_-$ is self-adjoint, we observe that
\begin{align}
(\nabla L_- F_1, G_1) + (\nabla Q, \nabla F_1) & = - (L_- F_1, \nabla G_1) - (L_- G_1, \nabla F_1) = (F_1, [\nabla, L_-] G_1) \nonumber \\
& = -  (F_1, (\nabla Q^2) G_1) = -2 (\nabla Q, F_1 G_1 Q) . \label{eq:R11_magic2}
\end{align}
By combining \eqref{eq:R11_magic1} and \eqref{eq:R11_magic2}, we conclude that \eqref{eq:R11_magic} holds. This shows that \eqref{eq:R11ortho} holds, and hence there is a unique solution $\bm{R}_{1,1} \perp \mathrm{ker} \,  L$ of equation \eqref{eq:R11}. Moreover, since $Q$ and $F_1$ are even functions whereas $G_1$ is odd, we note that 
\begin{equation}
\bm{R}_{1,1} = \left [ {F_2 \atop 0} \right ],  \quad \mbox{with some odd function $F_2$}.
\end{equation}

\medskip
{\bf Order $\Or(b^2)$:} We find the equation
\begin{equation} \label{eq:R20}
L \bm{R}_{2,0} = - \frac{1}{2} J \bm{R}_{1,0} + J \Lambda \bm{R}_{1,0} + |\bm{R}_{1,0}|^2 \bm{Q}. 
\end{equation}
Since $\bm{R}_{1,0} = [0,S_1]^\top$ with $L_- S_1 = \Lambda Q$, the solvability condition for $\bm{R}_{2,0}$ reduces to
\begin{equation}
\frac{1}{2} (\nabla Q, S_1) - (\nabla Q, \Lambda S_1) + (\nabla Q, S_1^2 Q) = 0.
\end{equation}
However, this is obviously true, since $S_1$ and $Q$ are even functions. Thus there exists a unique solution $\bm{R}_{2,0} \perp \mathrm{ker} \, L$ of equation \eqref{eq:R20}, which is given by
\begin{equation}
\bm{R}_{2,0} = \left [ { L_+^{-1} ( \frac{1}{2} S_1 - \Lambda S_1 + S_1^2 Q)  \atop 0} \right ],
\end{equation}
with $L_- S_1 = \Lambda Q$.

\medskip
{\bf Order $\Or(v^2)$:} We obtain the equation
\begin{equation}
L \bm{R}_{0,2} = - J \nabla \bm{R}_{0,1} + |\bm{R}_{0,1}|^2 \bm{Q} .
\end{equation}
Since $\bm{R}_{0,1} = [0, G_1]^\top$ and $\bm{Q} = [Q,0]^\top$ the solvability condition reads
\begin{equation}
(\nabla Q, \nabla G_1) + (\nabla Q, G_1^2 Q ) = 0.
\end{equation}
Clearly, this holds true, since $G_1$ is an odd function,  whereas $Q$ is even. Hence there exists a unique solution $\bm{R}_{0,2} \perp \mathrm{ker} \, L$ and it is given by
\begin{equation}
\bm{R}_{0,2} = \left [ {L_+^{-1} ( \nabla G_1 + G_1^2 Q) \atop 0 } \right ] .
\end{equation}

\medskip
{\bf Order $\Or(b^3)$:} We notice that $\bm{R}_{1,0} \cdot \bm{R}_{2,0} = 0$ and we obtain the equation
\begin{equation} \label{eq:R30}
L \bm{R}_{3,0} = -J \bm{R}_{2,0} + J \Lambda \bm{R}_{2,0}  +2 (\bm{R}_{2,0} \cdot \bm{Q}) \bm{R}_{1,0}  + |\bm{R}_{1,0}|^2 \bm{R}_{1,0} .
\end{equation} 
Note that the right side is of the form $[0, f]^\top$ with some nontrivial $f$. Hence the solvability condition for $\bm{R}_{3,0}$ is equivalent to
\begin{equation} \label{eq:R30_magic}
-(Q, T_2) + (Q, \Lambda T_2) + 2 (Q, Q T_2 S_1) + (Q, S_1^2 S_1) = 0, 
\end{equation}
where the function $S_1$ and $T_2$ satisfy 
\begin{equation}
L_- S_1 = \Lambda Q, \quad L_+ T_2 = \frac{1}{2} S_1 - \Lambda S_1 + S_1^2 Q .
\end{equation}
To see that \eqref{eq:R30_magic} holds, we first note that
\begin{align*}
\mbox{RHS of \eqref{eq:R30_magic}} & = -(Q,T_2) - (\Lambda Q, T_2) + 2 (T_2, Q^2 S_1) + (Q, S_1^2 S_1) \\
& = -(Q,T_2) - (L_-S_1, T_2) + 2(T_2, Q^2 S_1) + (Q,S_1^2 S_1) \\
& = -(Q,T_2) - (L_+ S_1, T_2) + (Q, S_1^2 S_1) \\
& = -(Q,T_2) -  \frac{1}{2} (S_1, S_1) + (S_1, \Lambda S_1) - (S_1, S_1^2 Q) + (Q, S_1^2 S_1) \\
& = -(Q,T_2) - \frac{1}{2} (S_1,S_1), 
\end{align*}
where in the last step also used that $(S_1, \Lambda S_1) = 0$ since $\Lambda^* = -\Lambda$. Thus it remains to show that
\begin{equation} \label{eq:R30_magic2}
-(Q,T_2) = \frac{1}{2} (S_1, S_1) .
\end{equation}
Indeed, by using $L_+ \Lambda Q = -Q$ and the equations for $T_2$ and $S_1$ above, we deduce
\begin{align}
-(Q, T_2) & =(\Lambda Q, \frac{1}{2} S_1 - \Lambda S_1 + S_1^2 Q) \nonumber \\
& = \frac{1}{2} (L_- S_1, S_1) - (L_-S_1, \Lambda S_1) +  (\Lambda Q, S_1^2 Q) \nonumber \\
& = \frac{1}{2} (S_1, DS_1) + \frac{1}{2} (S_1, S_1) - \frac{1}{2} (S_1, Q^2 S_1) - (L_-S_1, \Lambda S_1) +  (\Lambda Q, S_1^2 Q) . \label{eq:R30_magic3}
\end{align}
Next, we apply the commutator formula $ (L_- f, \Lambda f) = \frac{1}{2} (f, [L_-,\Lambda] f)$, which shows that
\begin{align} \label{eq:R30_magic4}
(L_- S_1, \Lambda S_1) & = \frac{1}{2} (S_1, [L_-,\Lambda] S_1) = \frac{1}{2} (S_1, [D,\Lambda] S_1) -  \frac{1}{2} (S_1, [Q^2,\Lambda] S_1) \nonumber \\
& =  \frac{1}{2} (S_1, D S_1) + (S_1, (x \cdot \nabla Q) Q S_1),
\end{align}
using that $[D,\Lambda] = D$ holds. Furthermore, we have the pointwise identity
 \begin{equation} \label{eq:R30_magic5}
-(x \cdot \nabla Q) Q +Q \Lambda Q =  \frac{1}{2} Q^2.
\end{equation}
If we now insert \eqref{eq:R30_magic4} and \eqref{eq:R30_magic5} into \eqref{eq:R30_magic3}, we obtain the desired relation \eqref{eq:R30_magic2} and thus the solvability condition \eqref{eq:R30_magic} holds as well. Note also that $\bm{R}_{3,0} = [0, g]^\top$ with some even function $g$.

\medskip
{\bf Order $\Or(b^4)$:} We have to solve 
\begin{equation} \label{eq:R40}
L \bm{R}_{4,0} = - \frac{3}{2} J \bm{R}_{3,0} + J \Lambda \bm{R}_{3,0} + |\bm{R}_{2,0}|^2 \bm{Q} + 2 (\bm{R}_{1,0} \cdot \bm{R}_{3,0}) \bm{Q} + 2 (\bm{R}_{2,0} \cdot \bm{Q} ) \bm{R}_{2,0} ,
\end{equation}
where we have already used that $\bm{R}_{1,0} \cdot \bm{Q} = \bm{R}_{3,0} \cdot \bm{Q} = 0$. Moreover, we readily see that 
\begin{equation}
\mbox{RHS of \eqref{eq:R40}} = \left [ {even \atop 0} \right ] \perp \mathrm{ker} \, L ,
\end{equation}
since $(g, \nabla Q) =0$ for any even function $g \in L^2(\R)$. Hence there is a unique solution $\bm{R}_{4,0} \perp \mathrm{ker} \, L$ of equation \eqref{eq:R40}, and we have that $\bm{R}_{4,0} = [h, 0]^\top$ holds with some even function $h$.

\medskip
{\bf Order $\Or(b^2 v)$:} At this order, we obtain the equation
\begin{equation} \label{eq:R21}
L \bm{R}_{2,1} = -\frac{3}{2} J \bm{R}_{1,1} + J \Lambda \bm{R}_{1,1} -  J \nabla \bm{R}_{2,0}  + 2 (\bm{R}_{1,1} \cdot \bm{Q}) \bm{R}_{1,0} + 2 (\bm{R}_{1,0} \cdot \bm{R}_{0,1} ) \bm{R}_{1,0} + |\bm{R}_{1,0}|^2 \bm{R}_{0,1} .
\end{equation}
Note also that $\bm{R}_{1,0} \cdot \bm{Q} = \bm{R}_{1,1} \cdot \bm{R}_{1,0} = \bm{R}_{0,1} \cdot \bm{R}_{2,0} = 0$. Using the symmetries of the previously constructed functions, we readily check that
\begin{equation}
\mbox{RHS of \eqref{eq:R40}} = \left [ {0 \atop odd} \right ] \perp \mathrm{ker} \, L ,
\end{equation}
since $(g, Q)=0$ for any odd function $g \in L^2(\R)$. Thus there exists a unique solution $\bm{R}_{2,1} \perp \mathrm{ker} \, L$ of equation \eqref{eq:R21}, and we see that $\bm{R}_{2,1} = [0, g]^\top$ with some odd function $g$.

\medskip
{\bf Step 2: Regularity and decay bounds.}
Let $m \geq 0$ be given. First, we recall that $\| Q \|_{H^m} \lesssim_m 1$ and $| Q(x) | \lesssim \langle x \rangle^{-2}$ holds. Since moreover $L_- \Lambda Q = -Q$ and $(\Lambda Q, Q)=0$, we can apply Lemma \ref{lem:decay} to conclude that 
\begin{equation}
\| \Lambda Q \|_{H^m} \lesssim_m 1, \quad \left | \Lambda Q(x) \right | \lesssim \langle x \rangle^{-2}.
\end{equation}
Next, by applying $\Lambda$ to the equation $L_- \Lambda = -Q$ and using that $[L_-, \Lambda] = D+ 2 x Q'Q$, we deduce
\begin{equation}
L_- \{  \Lambda^2 Q + \Lambda Q + \alpha Q \} = -(2x Q' Q+Q^2) \Lambda Q,
\end{equation}
for any $\alpha \in \R$. (Recall here that $L_- Q =0$.) By choosing $\alpha=\frac{(\Lambda Q, \Lambda Q)}{(Q,Q)}$ and using the previous bounds for $Q$ and $\Lambda Q$ (and hence for $xQ'$ as well), we can apply Lemma \ref{lem:decay} again to obtain the bounds 
\begin{equation}
\| \Lambda^2 Q \|_{m} \lesssim_m 1, \quad \left | \Lambda^2 Q(x) \right | \lesssim \langle x \rangle^{-2} .
\end{equation}

Having these bounds for $\bm{Q}=[Q, 0]^\top, \Lambda \bm{Q} = [ \Lambda Q, 0]^\top$, and $\Lambda^2 \bm{Q} = [\Lambda^2 Q, 0]^\top$ at hand, we can now prove the claimed bounds \eqref{ineq:Qbbound1} and \eqref{ineq:Qbbound2} by iterating the equations satisfied by the functions $\{ \bm{R}_{k, \ell} \}_{0 \leq k \leq 3, 0 \leq \ell 1}$ above. For instance, recall that $\bm{R}_{1,0} = [0, S_1]^\top$ with $L_- S_1 = \Lambda Q$ and hence $\Lambda L_- S_1 = \Lambda^2 Q$. Then, by using the commutator $[L_-, \Lambda]$ and the previous estimates for $\{ Q, \Lambda Q, \Lambda^2 Q\}$, we derive that
\begin{equation}
\| \Lambda^k S_1 \|_{H^m} \lesssim_m 1, \quad \left | \Lambda^k S_1(x) \right | \lesssim \langle x \rangle^{-2}, \quad \mbox{for $k=0,1,2$ and $m \geq 0$}.
\end{equation}
Using this and proceeding in the same manner, we deduce that \eqref{ineq:Qbbound1} and \eqref{ineq:Qbbound2} hold.

Finally, we mention that the bounds \eqref{ineq:Qberror} for the error term $\bm{\Psi}_\Pa$ follows from expanding $|\bm{Q}_\Pa|^2 \bm{Q}_\Pa$ and using the regularity and decay bounds for the functions $\{\bm{R}_{k,\ell} \}$. We omit the straightforward details. The proof of Proposition \ref{prop:Qb_existence} is now complete. \end{proof}

We now turn to some key properties of the approximate blowup profile $\Qbp$ constructed in Propostion \ref{prop:Qb_existence} above.

\begin{lemma} \label{lem:Qb_properties}
The mass, the energy and the linear momentum of $\Qbp$ satisfy:
$$
\int |\Qbp|^2 = \int Q^2 + \Or(b^4 + v^2 + v \Pa^2),
$$
$$
E(\Qbp) = e_1 b^2 + \Or(b^4+v^2 + v \Pa^2), \quad P(\Qbp) = p_1 v + \Or(b^4 + v^2 + v \Pa^2) . 
$$
Here $e_1 >0$ and $p_1 > 0$ are the positive constants given by 
$$e_1 = \frac{1}{2} (L_- S_1, S_1), \quad p_1 = 2 (L_- G_1, G_1),
$$ 
where $S_1$ and $G_1$ satisfy $L_- S_1 = \Lambda Q$ and $L_- G_1 = -\nabla Q$, respectively. 
\end{lemma}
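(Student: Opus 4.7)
The plan is to write $\Qbp = \bm{Q} + \bm{R}$ with $\bm{R} = b\bm{R}_{1,0} + v \bm{R}_{0,1} + bv \bm{R}_{1,1} + b^2 \bm{R}_{2,0} + v^2 \bm{R}_{0,2} + b^3 \bm{R}_{3,0} + b^2 v \bm{R}_{2,1} + b^4 \bm{R}_{4,0}$, expand each of the three conserved quantities in powers of $(b,v)$, and extract the leading nontrivial contribution. The organizing principle is the explicit parity structure from the Remark after Proposition \ref{prop:Qb_existence}: under $x \mapsto -x$ each component of $\bm{R}_{k,\ell}$ has a definite parity, so a great many cross terms vanish by symmetry. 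All leftover contributions will be shown to be $O(b^4 + v^2 + v \Pa^2)$ by Young's inequality and the uniform bounds \eqref{ineq:Qbbound1}--\eqref{ineq:Qbbound2}.

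For the mass, expand $\int |\Qbp|^2 = \int Q^2 + 2(\bm{Q},\bm{R}) + (\bm{R},\bm{R})$. Parity immediately kills the contributions of $\bm{R}_{1,0}, \bm{R}_{0,1}, \bm{R}_{3,0}$ (their first components vanish) and of $\bm{R}_{1,1}$ (odd times even) to $(\bm{Q},\bm{R})$, as well as $(S_1,G_1)=0$ in $(\bm{R},\bm{R})$. The only surviving order-$b^2$ piece is $2b^2(Q,T_2) + b^2 (S_1,S_1)$, which vanishes identically by the relation $-(Q,T_2) = \tfrac12 (S_1,S_1)$ — this is exactly the magic identity \eqref{eq:R30_magic2} that was used in Step 1 of Proposition \ref{prop:Qb_existence} to guarantee solvability at order $b^3$, and we simply recycle it here. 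All remaining terms are absorbed into the error.

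For the energy, first use Pohozaev on the ground state equation $DQ + Q - Q^3 = 0$: testing against $Q$ and against $\Lambda Q$, using the commutator identity $[D,\Lambda]=D$, the skew-adjointness of $\Lambda$, and the $L^2$-criticality relation $(Q,\Lambda Q)=0$, one obtains $\int Q^4 = 2 \int Q^2$ and hence $E(\bm{Q})=0$. Writing $\bm{R} = [R_1,R_2]^\top$, the standard expansion of $E$ around $\bm{Q}$ then reads
\begin{equation*}
E(\bm{Q}+\bm{R}) = -(Q,R_1) + \tfrac12 (L_+ R_1, R_1) - \tfrac12 (R_1,R_1) + \tfrac12 (L_- R_2, R_2) - \tfrac12 (R_2,R_2) + \mbox{cubic and higher}.
\end{equation*}
Parity eliminates the $b$-, $v$- and $bv$-contributions (using $(\Lambda Q, G_1) = (S_1,G_1) = 0$), and the order-$b^2$ terms combine, again via $-(Q,T_2)=\tfrac12(S_1,S_1)$, into $\tfrac{b^2}{2}(S_1, S_1) + \tfrac{b^2}{2}(L_- S_1, S_1) - \tfrac{b^2}{2}(S_1,S_1) = e_1 b^2$, while the order-$v^2$ piece is directly swept into the error.

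For the momentum, integration by parts gives $P(u) = 2(u_1, \nabla u_2)$. The order-$b$ term $2b(Q, \nabla S_1)$ vanishes by parity ($Q$ and $S_1$ even, so $\nabla S_1$ is odd), whereas the order-$v$ term yields $2v(Q,\nabla G_1) = -2v(\nabla Q, G_1) = 2v (L_- G_1, G_1) = p_1 v$, using the defining equation $L_- G_1 = -\nabla Q$. All remaining contributions coming from the cross products in $(R_1, \nabla R_2)$ are either parity-forbidden or of size $O(b^4 + v^2 + v \Pa^2)$. The only nontrivial analytic input in the whole proof is the recycled identity $-(Q,T_2)=\tfrac12(S_1,S_1)$; beyond that, the proof is pure parity/size bookkeeping, and the principal care is simply to verify that every mixed term such as $b^3 v$, $b^2 v^2$, $v b^4$ sits inside $O(b^4 + v^2 + v\Pa^2)$.
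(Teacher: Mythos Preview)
Your proof is correct and follows essentially the same approach as the paper: both expand mass, energy and momentum in $(b,v)$, use the parity structure of the $\bm{R}_{k,\ell}$ to kill most cross terms, and invoke the identity $-(Q,T_2)=\tfrac12(S_1,S_1)$ from \eqref{eq:R30_magic2} to cancel the $b^2$ contribution in the mass and extract $e_1$ in the energy. The only cosmetic difference is that you organize the second-order energy expansion via the quadratic forms $\tfrac12(L_\pm \cdot,\cdot)-\tfrac12(\cdot,\cdot)$, whereas the paper writes out the kinetic and quartic pieces separately before recombining; the computations are equivalent.
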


\begin{remark}
Note that $L_- > 0$ on $Q^\perp$ and we have $S_1 \perp Q$ and $G_1 \perp Q$. 
\end{remark}

\begin{remark}
As an aside, we mention that a calculation shows that $$\int |\Qbp|^2 = \int Q^2 - c v^2 + \Or(b^4 + v \Pa^2)$$ with some constant $c > 0$. Hence, the boosted blowup profiles have a strictly subcritical mass for $v \neq 0$ small.
\end{remark}

\begin{proof}
From the proof of Proposition \ref{prop:Qb_existence} we recall that the facts that $\bm{R}_{1,0} = [0, S_1]^\top$, $\bm{R}_{0,1} = [0, G_1]^\top$, and $\bm{R}_{1,1} = [f, 0]^\top$ with some odd function $f$. Hence we have $\int \bm{Q} \cdot \bm{R}_{0,1} = \int \bm{Q} \cdot \bm{R}_{1,0} = \int \bm{Q} \cdot \bm{R}_{1,1} = 0$. Next, we recall that $\bm{R}_{2,0} = [T_2,0]^\top$ satisfies $(S_1,S_1) + 2(Q,T_2) = 0$, as shown in \eqref{eq:R30_magic3} above. In summary, we thus see that
$$
\int |\Qbp|^2 = \int Q^2 + \Or(b^4 + v^2 + v \Pa^2 ). 
$$

To treat the expansion of the energy, we first recall that $E(\bm{Q}) = 0$ and $DQ + Q - Q^3=0$ and thus $E'(\bm{Q}) = - Q$. Since moreover we have $(Q,S_1) =0$ and $(Q,G_1)=0$, we obtain
\begin{equation*}
E(\Qbp) = b^2 \left \{ \frac{1}{2} (S_1, D S_1) + (T_2, DQ) - \frac{1}{2} (Q^2, S_1^2) - (Q^3, T_2) \right \}  + \Or(b^4 + v^2 + v \Pa^2).   
\end{equation*}
Note also that the term $\Or(bv)$ vanishes in the expansion for $E(\Qbp)$, since $G_1$ and $S_1$ are odd and even functions, respectively, and hence $(S_1, D G_1) =0$ etc.  Using that $DQ + Q - Q^3 = 0$ and \eqref{eq:R30_magic3} once again, we see that the expression $\{ \ldots \}$ above equals $e_1 = \frac 1 2 (L_- S_1, S_1)$, as claimed.

For the expansion for the linear momentum functional, we observe that $P(\bm{f}) = 2 \int f_1 \nabla f_2$ for $\bm{f} = [f_1, f_2]^\top$. Hence, 
\begin{align*}
P(\Qbp) & = 2 b \int Q \nabla S_1 + 2 v \int Q \nabla G_1 + b^2 \int S_1 \nabla S_1 + 2b^3 \int T_2 \nabla S_1 + \Or(b^4 + v^2 + v \Pa^2) \\
& =  2v(L_- G_1, G_1) + \Or(b^4 + v^2 + v \Pa^2),
\end{align*}
since $L_-G_1 = -\nabla Q$ and using that $\int Q \nabla S_1 = \int S_1 \nabla S_1 = \int T_2 \nabla S_1=0$ due to the fact that $Q, S_1, T_2$ are even functions. The proof of Lemma \ref{lem:Qb_properties} is now complete. \end{proof}

\section{Modulation Estimates}
\label{sec:modestimates}

We start with a general observation: If $u=u(t,x)$ solves \eqref{eq:wave}, then we define the function $v=v(s,y)$ by setting
\begin{equation} \label{def:v}
u(t,x) = \frac{1}{\lambda^{\frac 1 2}(t)} v \left ( s, \frac{x - \alpha(t)}{\lambda(t)} \right ) e^{i \gamma(t)}, \quad \frac{ds}{dt} = \frac{1}{\lambda(t)} .
\end{equation}
It is easy to check that $v=v(s,y)$ with $y= \lambda^{-1} (x - \alpha)$ satisfies
\begin{equation}
i \partial_s v - D v - v + v |v|^2  = i \frac{\lambda_s}{\lambda} \Lambda v + i \frac{\alpha_s}{\lambda} \cdot \nabla v + \tgamma_s v,
\end{equation}
where we set $\tgamma_s = \gamma_s -1$. Here, of course, the operators $D$ and $\nabla$ are understood as $D = D_y$ and $\nabla = \nabla_y$, respectively.

\subsection{Geometrical decomposition and modulation equations}
Let $u(t) \in H^{1/2}(\R)$ be a solution of \eqref{eq:wave} on some time interval $[t_0,t_1]$ with $t_1 < 0$. Assume that $u(t)$ admits a geometrical decomposition of the form
\begin{equation} \label{eq:decom_u}
u(t,x) = \frac{1}{\lambda^{\frac 1 2}(t)} \left  [ Q_{\Pa(t)} + \eps \right ] \left (t, \frac{x - \alpha(t)}{\lambda(t)} \right ) e^{i \gamma(t)}, 
\end{equation}
with $\Pa(t) = (b(t), v(t))$ and we impose the uniform smallness bound 
\begin{equation} \label{ineq:parsmall}
b^2(t) + |v(t)| + \| \eps(t) \|_{H^{1/2}}^2 \ll 1.
\end{equation}
Furthermore, we assume that $u(t)$ has almost critical mass in the sense that
\begin{equation} \label{ineq:almost_crit_mass}
\left | \int |u(t)|^2 - \int Q^2 \right | \lesssim \lambda^2(t) , \quad \forall t \in [t_0,t_1].
\end{equation}

To fix the modulation parameters $\{ b(t), v(t), \lambda(t), \alpha(t), \gamma(t) \}$ uniquely, we impose the following orthogonality conditions on $\eps = \eps_1 + i \eps_2$ as follows:
\begin{align} \label{eq:ortho1}
(\eps_1, \Lambda \Theta_\Pa) - (\eps_2, \Lambda \Sigma_\Pa) = 0, \\ \label{eq:ortho2}
 (\eps_1, \partial_b \Theta_\Pa) - (\eps_2, \partial_b \Sigma_\Pa) = 0, \\ \label{eq:ortho3}
 (\eps_1, \rho_2) - (\eps_2, \rho_1) = 0, \\ \label{eq:ortho4}
 (\eps_1, \nabla \Theta_\Pa ) - (\eps_2, \nabla \Sigma_\Pa) = 0 , \\ \label{eq:ortho5}
 (\eps_1, \partial_v \Theta_\Pa) - (\eps_2, \partial_v \Sigma_\Pa ) = 0 .
\end{align}
Here and in what follows, we use the notation
\begin{equation}
Q_\Pa = \Sigma_\Pa + i \Theta_\Pa,
\end{equation}
which (in terms of the vector notation used in Section \ref{sec:profile} means that 
\begin{equation}
\Qbp = \left [ {\Sigma_\Pa \atop \Theta_\Pa} \right ] .
\end{equation}
In condition \eqref{eq:ortho4}, the function $\rho = \rho_1 + i \rho_2$ is defined by 
\begin{equation} \label{def:rho}
L_+ \rho_1 = S_1, \quad L_- \rho_2 =  2 b Q S_1 \rho_1 + b \Lambda \rho_1 - 2b T_2 + 2 v Q G_1 \rho_1 +  v \cdot \nabla \rho_1 + v F_2,
\end{equation}
where $S_1$, $T_2$ and $F_2$ are the functions introduced in the proof of Proposition \ref{prop:Qb_existence}. Note that $L_+^{-1}$ exists on $L^2_{\mathrm{even}}(\R)$ and thus $\rho_1$ is well-defined. Moreover, it is easy to see that the right-hand side in the equation for $\rho_2$ is perpendicular to $Q$. Indeed,
\begin{align*}
 (Q, 2QS_1 \rho_1 + \Lambda \rho_1 - 2 T_2) &= 2 (Q^2 S_1, \rho_1) - (\Lambda Q, \rho_1) - 2 (Q, T_2) \\
 & = 2 (Q^2 S_1, \rho_1) - (S_1, L_- \rho_1) + (S_1,S_1) \\
 & = - (S_1, L_+ \rho_1) +(S_1, S_1) = 0, 
\end{align*}
using that $(S_1,S_1) = -2(T_2, Q)$, see \eqref{eq:R30_magic3}, and the definition of $\rho_1$. Moreover, we clearly see that $2 Q G_1 \rho_1 + v \cdot \nabla \rho_1 + F_2 \perp Q$, since $G_1$ and $F_2$ are odd function, whereas $\rho_1$ and $Q$ are even. Hence $\rho_2$ is well-defined too.

We refer to Appendix \ref{sec:modeqn} for some standard arguments, which show that the orthogonality conditions \eqref{eq:ortho1}-\eqref{eq:ortho5}  imply that the modulation parameters $\{ b(t), v(t), \lambda(t), \gamma(t), \alpha(t) \}$ are uniquely determined, provided that $\eps=\eps_1 + i \eps_2$ is sufficiently small in $H^{1/2}(\R)$. Moreover, it follows from standard arguments that $\{ b(t), v(t), \lambda(t), \gamma(t), \alpha(t) \}$ are $C^1$-functions. See Appendix \ref{sec:modeqn} for more details.

In the following, we shall  often use the short-hand notation $\Sigma = \Sigma_\Pa$ and $\Theta = \Theta_\Pa$. If we insert the decomposition \eqref{eq:decom_u} into \eqref{eq:wave}, we obtain the following system
\begin{align}  \label{eq:eps1}
& \left ( b_s + \frac{1}{2} b^2 \right ) \partial_b \Sigma_\Pa + \left ( v_s + bv \right ) \partial_v \Sigma_\Pa + \partial_s \eps_1 - M_-(\eps) + b \Lambda \eps_1 - v \cdot \nabla \eps_1 \\
& = \left ( \frac{\lambda_s}{\lambda} + b \right ) \left ( \Lambda \Sigma_\Pa + \Lambda \eps_1 \right ) + \left ( \frac{\alpha_s}{\lambda} - v \right ) \cdot \left ( \nabla \Sigma_\Pa + \nabla \eps_1 \right ) + \tgamma_s \left ( \Theta_\Pa + \eps_2 \right ) \nonumber \\
& \quad + \Im (\Psi_\Pa ) - R_2(\eps),    \nonumber
\end{align}
\begin{align} \label{eq:eps2}
& \left ( b_s + \frac{1}{2} b^2 \right ) \partial_b \Theta_\Pa + \left ( v_s + bv \right ) \partial_v \Theta_\Pa + \partial_s \eps_2 + M_+(\eps) + b \Lambda \eps_2 - v \cdot \nabla \eps_2 \\
& = \left ( \frac{\lambda_s}{\lambda} + b \right ) \left ( \Lambda \Theta_\Pa + \Lambda \eps_2 \right ) + \left ( \frac{\alpha_s}{\lambda} - v \right ) \cdot \left ( \nabla \Theta_\Pa + \nabla \eps_2 \right )  - \tgamma_s \left ( \Sigma_\Pa + \eps_1 \right ) \nonumber \\
& \quad  - \Re (\Psi_\Pa ) + R_1(\eps).   \nonumber
\end{align}
Here $\Psi_\Pa$ denotes the error term from Proposition \ref{prop:Qb_existence}, and $M=(M_+,M_-)$ are small deformations of the linearized operator $L=(L_+,L_-)$ given by
\begin{align}
M_+( \eps ) & = D \eps_1 + \eps_1 - |Q_\Pa|^2 \eps_1 - 2 \Sigma_\Pa^2 \eps_1 - 2 \Sigma_\Pa \Theta_\Pa \eps_2, \\
M_- (\eps) & = D \eps_2 + \eps_2 - |Q_\Pa|^2 \eps_2 - 2 \Theta_\Pa^2 \eps_2 - 2 \Sigma_\Pa \Theta_\Pa \eps_1 .
\end{align}
The higher order terms $R_1(\eps)$ and $R_2(\eps)$ are found to be
\begin{align}
R_1(\eps) & =  3 \Sigma_\Pa \eps_1^2 + 2 \Theta_\Pa \eps_1 \eps_2 + \Sigma_\Pa \eps_2^2 + |\eps|^2 \eps_1, \\
R_2(\eps) & = 3 \Theta_\Pa \eps_2^2 + 2 \Sigma_\Pa \eps_1 \eps_2 + \Theta_\Pa \eps_1^2 + |\eps|^2 \eps_2 .
\end{align}

We have the following energy type bound.
\begin{lemma} \label{lem:energy_control}
For $t \in [t_0,t_1]$, it holds that
\begin{equation*} 
b^2 + |v| + \| \eps \|_{H^{1/2}}^2 \lesssim \lambda (|E_0| + |P_0|) + \Or(\l^2 + b^4 + v^2 + v \Pa^2) .
\end{equation*}
Here $E_0 = E(u_0)$ and $P_0 = P(u_0)$ denote the conserved energy and linear momentum of $u=u(t,x)$, respectively.
\end{lemma}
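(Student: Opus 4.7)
The plan is to combine conservation of mass, energy and momentum in the renormalized variables with Taylor expansions around the approximate profile $Q_\Pa$, and close the estimate using the orthogonality conditions \eqref{eq:ortho1}--\eqref{eq:ortho5} together with the coercivity of the linearized operator $L$. Under the scaling \eqref{def:v}, energy and momentum are $\lambda^{-1}$-homogeneous while mass is scale-invariant, so writing $w = Q_\Pa + \eps$ one has $\lambda(t) E_0 = E(w)$, $\lambda(t) P_0 = P(w)$ and $M(u_0) = M(w)$. Combining the almost critical mass hypothesis \eqref{ineq:almost_crit_mass} with the expansion $M(Q_\Pa) = \int Q^2 + \Or(b^4 + v^2 + v\Pa^2)$ from Lemma \ref{lem:Qb_properties} produces the key mass identity
\begin{equation*}
2(\Sigma_\Pa, \eps_1) + 2(\Theta_\Pa, \eps_2) + \|\eps\|_{L^2}^2 = \Or(\lambda^2 + b^4 + v^2 + v\Pa^2),
\end{equation*}
which will recast the apparent linear contribution $(Q_\Pa, \eps)_\R$ in the energy expansion as a quadratic term plus an acceptable remainder.

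The heart of the argument is then to Taylor expand around $Q_\Pa$ in the renormalized Hamiltonian $\tilde H(w) = E(w) + \tfrac{1}{2}(M(w) - \int Q^2)$, which satisfies $\tilde H''(Q) = L$. Using $Q_\Pa = Q + \Or(\Pa)$ to substitute $L$ for $\tilde H''(Q_\Pa)$ modulo a $\Pa$-correction yields
\begin{equation*}
\tilde H(w) = \tilde H(Q_\Pa) + (\tilde H'(Q_\Pa), \eps)_\R + \tfrac{1}{2}(L \eps, \eps) + \Or\bigl(\Pa \|\eps\|^2_{H^{1/2}} + \|\eps\|^3_{H^{1/2}}\bigr).
\end{equation*}
The crucial observation is that the profile equation \eqref{eq:Qbdef} rewrites the gradient as
\begin{equation*}
\tilde H'(Q_\Pa) = E'(Q_\Pa) + Q_\Pa = -\tfrac{1}{2}b^2\, J \partial_b Q_\Pa - bv\, J\partial_v Q_\Pa + b\, J\Lambda Q_\Pa - v\, J\nabla Q_\Pa + \Psi_\Pa,
\end{equation*}
and each of the first four inner products $(J\cdot, \eps)_\R$ vanishes by \eqref{eq:ortho2}, \eqref{eq:ortho5}, \eqref{eq:ortho1}, \eqref{eq:ortho4} respectively, while the residual $|(\Psi_\Pa, \eps)_\R|$ is bounded by $(b^5 + v^2\Pa)\|\eps\|_{L^2}$ via Proposition \ref{prop:Qb_existence}. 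Combined with $\tilde H(Q_\Pa) = e_1 b^2 + \Or(b^4 + v^2 + v\Pa^2)$ and $\tilde H(w) = \lambda E_0 + \Or(\lambda^2)$, this produces the master identity
\begin{equation*}
e_1 b^2 + \tfrac{1}{2}(L\eps, \eps) = \lambda E_0 + \Or\bigl(\lambda^2 + b^4 + v^2 + v\Pa^2 + \Pa\|\eps\|^2_{H^{1/2}} + \|\eps\|^3_{H^{1/2}}\bigr).
\end{equation*}

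For the momentum, the analogous step is simpler: since $P(w) = 2\int w_1 \partial_x w_2$, the linear part of $P(w) - P(Q_\Pa)$ equals $2[(\eps_1, \nabla\Theta_\Pa) - (\eps_2, \nabla\Sigma_\Pa)] = 0$ by \eqref{eq:ortho4}, so Lemma \ref{lem:Qb_properties} yields $p_1 v = \lambda P_0 + \Or(b^4 + v^2 + v\Pa^2 + \|\eps\|^2_{H^{1/2}})$. The main obstacle is then securing the coercivity $(L\eps, \eps) \gtrsim \|\eps\|^2_{H^{1/2}}$. At $\Pa = 0$ the kernel of $L$ is spanned by $[\nabla Q, 0]^\top$ and $[0, Q]^\top$ and $L_+$ has a single negative eigenvalue; the Frank--Lenzmann nondegeneracy \cite{FrLe2012}, together with the mass identity above (playing the role of a Weinstein-type constraint fixing $(Q, \eps_1)$) and the orthogonalities \eqref{eq:ortho1}, \eqref{eq:ortho3}, \eqref{eq:ortho4}, should yield the desired spectral gap. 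Because \eqref{eq:ortho1}--\eqref{eq:ortho5} are imposed with respect to $\Pa$-dependent directions which are $\Or(\Pa)$-perturbations of their $\Pa = 0$ counterparts, coercivity survives up to an error $\Or(\Pa)\|\eps\|^2_{H^{1/2}}$ absorbed by \eqref{ineq:parsmall}. Summing the two master identities, invoking coercivity with $e_1, p_1 > 0$, and absorbing the cubic and $\Pa$-weighted error terms in the smallness regime then yields the claimed bound.
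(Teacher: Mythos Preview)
Your proof is essentially the paper's own argument, repackaged via the combined functional $\tilde H = E + \tfrac12(M - \int Q^2)$: the mass identity, the use of the profile equation \eqref{eq:Qbdef} together with \eqref{eq:ortho1}, \eqref{eq:ortho2}, \eqref{eq:ortho4}, \eqref{eq:ortho5} to kill the linear term, the momentum expansion via \eqref{eq:ortho4}, and the coercivity of $L$ all match the paper's proof. One small slip: in the coercivity step the relevant orthogonalities are \eqref{eq:ortho2}, \eqref{eq:ortho3}, \eqref{eq:ortho5} (controlling $(\eps_1,S_1)$, $(\eps_2,\rho_1)$, $(\eps_1,G_1)$ in Lemma~\ref{lem:coerc}), not \eqref{eq:ortho1} and \eqref{eq:ortho4}.
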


\begin{proof}
By conversation of $L^2$-mass and Lemma \ref{lem:Qb_properties}, we find that $\int |u|^2 = \int |Q_\Pa + \eps |^2 = \int |Q|^2 + 2 \Re ( \eps, Q_\Pa) + \int |\eps|^2 + \Or(b^4+ v^2 + v\Pa^2)$.
By assumption \eqref{ineq:almost_crit_mass}, this implies
\begin{equation} \label{eq:control_mass}
2 \Re ( \eps, Q_\Pa ) +  \int |\eps|^2 = \Or(\l^2 + b^4 + v^2 + v \Pa^2 ) .
\end{equation}  
Next, we recall that $v = Q_\Pa + \eps$ thanks to \eqref{def:v} and the assumed form of $u=u(t,x)$. Hence, by energy conservation and scaling, we obtain
\begin{equation} \label{eq:control_scaling}
E(v) = \lambda E(u_0).
\end{equation}
On the other hand, from Lemma \ref{lem:Qb_properties} and by expanding the energy functional,
\begin{align} 
 E(v) & = E(Q_\Pa) + \Re \left (\eps, D Q_\Pa - |Q_\Pa|^2 Q_\Pa \right ) \label{eq:control_energy}  \\
 & \quad +  \frac{1}{2} \int | D^{\frac 1 2} \eps |^2 - \frac{1}{2} \int \left \{  |Q_\Pa|^2 (\eps_1^2 + \eps_2^2) - 2 \Sigma_\Pa^2 \eps_1 - 4 \Sigma_\Pa \Theta_\Pa \eps_1 \eps_2 - 2 \Theta_\Pa \eps_2^2 \right \} \nonumber \\
 & \quad + \Or \left ( \| \eps \|_{H^{1/2}}^3 + \Pa^2 \| \eps \|_{H^{1/2}}^2 \right ) \nonumber \\
  & = e_1 b^2 + \Re \left ( \eps, D Q_\Pa - |Q_\Pa|^2 Q_\Pa \right )  \nonumber \\
& \quad  + \frac{1}{2} \int | D^{\frac 1 2} \eps |^2 - \frac{1}{2} \int \left \{  |Q_\Pa|^2 (\eps_1^2 + \eps_2^2) - 2 \Sigma_\Pa^2 \eps_1 - 4 \Sigma_\Pa \Theta_\Pa \eps_1 \eps_2 - 2 \Theta_\Pa \eps_2^2 \right \}  \nonumber \\
& \quad + \Or \left ( \| \eps \|_{H^{1/2}}^3 + \| \eps \|_{H^{1/2}}^2 \Pa^2 + b^4 + v^2 +  v\Pa^2 \right ) , \nonumber
\end{align}
where $e_1 = \frac{1}{2}(L_- S_1, S_1) > 0$. Combining \eqref{eq:control_mass}, \eqref{eq:control_scaling} and \eqref{eq:control_energy}, we find that
\begin{align*}
 \lambda E_0 & = b^2 e_1 + \Re ( \eps, D Q_\Pa + Q_\Pa - |Q_\Pa|^2 Q_\Pa)+  \frac{1}{2} \left \{ M_+(\eps) + M_- (\eps) \right \}  \\
&   \quad + \Or \left  ( \| \eps \|_{H^{1/2}}^3 + \| \eps \|_{H^{1/2}}^2 \Pa^2 + b^4 + v^2 +  v \Pa^2 \right ) .
\end{align*}
In the previous equation, we note that the term linear in $\eps=\eps_1 + i \eps_2$ satisfies
\begin{align*}
\Re \left ( \eps, DQ_\Pa + Q_\Pa - |Q_\Pa|^2 Q_\Pa \right  ) & = \Im \left ( \eps, \frac{b^2}{2} \partial_b Q_\Pa + bv \partial_v Q_\Pa - b \Lambda Q_\Pa  + v \cdot \nabla Q_\Pa \right ) \\
& \quad + \Or( \eps  ( b^4 + v^2 + v \Pa^2) ) \\
& = \Or(b^4 + v^2 + v \Pa^2),
\end{align*}
thanks to the orthogonality conditions \eqref{eq:ortho1}, \eqref{eq:ortho2}, \eqref{eq:ortho4} and \eqref{eq:ortho5}. Next, we observe that quadratic form $M=(M_+, M_-)$ is a small deformation of the quadratic form given by the linearization $L=(L_+,L_-)$ around $Q$. Hence, we deduce
\begin{multline} \label{eq:controlfinal}
b^2 e_1 + \frac{1}{2} \left \{ (L_+ \eps_1,  \eps_1) + (L_- \eps_2,  \eps_2) \right \}  \\ = \lambda E_0 + \Or( \| \eps \|_{H^{1/2}}^3 + b^4 + v^2 + v \Pa^2) + o( \| \eps \|_{H^{1/2}}^2 ) .
\end{multline} 
 Next, we recall from Lemma \ref{lem:coerc} the coercivity estimate
\begin{align}
(L_+ \eps_1, \eps_1) + (L_- \eps_2, \eps_2) & \geq c_0 \| \eps \|_{H^{1/2}}^2  \nonumber \\
& \quad - \frac{1}{c_0} \left \{ (\eps_1, Q)^2 + (\eps_1, S_1)^2 + (\eps_1, G_1)^2 + (\eps_2, \rho_1)^2 \right \}
 \label{ineq:coercgood}
\end{align}
with some universal constant $c_0 > 0$. (Here recall  that $L_- S_1 = \Lambda Q$ and $L_- G_1 = -\nabla Q$.) Note that the orthogonality conditions \eqref{eq:ortho2}, \eqref{eq:ortho3} and \eqref{eq:ortho5} imply that
$$
(\eps_1,S_1)^2 = \Or \left ( \Pa^2  \| \eps \|_{L^2}^2 \right ), \quad (\eps_1, G_1)^2 = \Or( \Pa^2 \| \eps \|_{L^2}^2), \quad (\eps_2, \rho_1)^2 = \Or ( \Pa^2 \| \eps \|_{L^2}^2) . 
$$
Furthermore, from the relation \eqref{eq:control_mass} we deduce that
$$
\left | (\eps_1, Q ) \right |^2 = o( \| \eps \|_{L^2}^2 ) + \Or(\l^2 + b^4 + v^2 +  v \Pa^2) .
$$
Combining these bounds with \eqref{ineq:coercgood} and the universal smallness assumption for $\Pa$ and $\| \eps \|_{H^{1/2}}$, we obtain that
$$
(L_+ \eps_1, \eps_1) + (L_- \eps_2, \eps_2) \geq \frac{c_0}{2} \| \eps \|_{H^{1/2}}^2 + \Or(b^4 + v^2 + v \Pa^2) .
$$
Inserting this bound into \eqref{eq:controlfinal} and recalling that $e_1 = \frac 1 2 (L_- S_1, S_1) > 0$ holds, we derive that 
\begin{equation} \label{ineq:energy_control}
b^2 + \| \eps \|_{H^{1/2}}^2 \lesssim \lambda E_0 + \Or (\l^2 + b^4 +v^2 + v \Pa^2 ).
\end{equation}

As our final step, we derive the bound for the boost parameter $v$. Here we observe that
$$
P(v) = \lambda P(u_0),
$$
by scaling and using the conversation of the linear momentum $P(u(t)) = P(u_0)$. Hence, by expansion and Lemma \ref{lem:Qb_properties} and using the orthogonality \eqref{eq:ortho4}, we obtain
\begin{align*}
\lambda P_0 & = P(v) = P(Q_\Pa) + 2 \Re ( \eps, -i \nabla (\Sigma_\Pa + i \Theta_\Pa) ) + \Re ( \eps, -i \nabla \eps ) \\
 & = p_1 v +  \Or( b^4 + v^2 + v \Pa^2 + \| \eps \|_{H^{1/2}}^2 ),
\end{align*}
with the universal constant $p_1 = 2 (L_- G_1, G_1) > 0$. Recalling that \eqref{ineq:energy_control} holds, we complete the proof of Lemma \ref{lem:energy_control}. \end{proof}

We continue with the estimating the modulation parameters. To this end, we define the vector-valued function
\begin{equation}
\Mod(t) := \left ( b_s + \frac{1}{2} b^2, \tgamma_s, \frac{\lambda_s}{\lambda} + b, \frac{\alpha_s}{\lambda}  - v, v_s + bv \right ) .
\end{equation}
We have the following result.

\begin{lemma} \label{lem:Mod_bound} 
For $t \in [t_0,t_1]$, we have the bound
$$
\left | \Mod(t) \right | \lesssim  \lambda^2 + b^4 + v^2 + v \Pa^2 + \Pa^2 \| \eps \|_{L^2} + \| \eps \|_{L^2}^2 + \| \eps \|_{H^{1/2}}^3  .
$$				
Furthermore, we have the improved bound
$$
\left | \frac{\lambda_s}{\lambda} + b \right | \lesssim b^5 + v^2 \Pa + \Pa^2 \| \eps \|_{L^2} + \| \eps \|_{L^2}^2 + \| \eps \|_{H^{1/2}}^3 .
$$
\end{lemma}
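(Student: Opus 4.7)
The plan is to time-differentiate each of the five orthogonality conditions \eqref{eq:ortho1}--\eqref{eq:ortho5}, substitute the evolution equations \eqref{eq:eps1}--\eqref{eq:eps2} for $\partial_s\eps_1,\partial_s\eps_2$, and assemble the resulting identities into a linear system
\begin{equation*}
A(\Pa)\,\Mod(t) = \Phi(t)
\end{equation*}
for the five components of $\Mod(t)$. The $s$-derivatives acting on the $\Pa$-dependent test functions $\Lambda\Theta_\Pa,\partial_b\Theta_\Pa,\rho_i,\nabla\Theta_\Pa,\partial_v\Theta_\Pa$ and their $\Sigma_\Pa$-counterparts produce additional contributions proportional to $b_s$ and $v_s$, which are absorbed into $A(\Pa)$ as an $\Or(|\Pa|)$-perturbation of a constant matrix $A(0)$.

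The first step is to establish the uniform invertibility of $A(\Pa)$ for $|\Pa|$ small. Expanding $\Qbp=\bm Q+b\bm R_{1,0}+v\bm R_{0,1}+\dots$ from Proposition \ref{prop:Qb_existence} and using the parity structure of the $\bm R_{k,\ell}$'s, one finds that $A(0)$ is block diagonal with nonzero diagonal entries built from $L_+\Lambda Q=-Q$, $L_-S_1=\Lambda Q$, $L_-G_1=-\nabla Q$ and $L_+\rho_1=S_1$; the relevant universal scalars are $(\Lambda Q,S_1)=(L_-S_1,S_1)=2e_1>0$, $(\nabla Q,G_1)=-(L_-G_1,G_1)=-p_1/2<0$, and $(Q,\rho_1)=-(\Lambda Q,S_1)=-2e_1\neq 0$. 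Standard matrix perturbation then yields $\|A(\Pa)^{-1}\|=\Or(1)$.

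The second step is a componentwise estimate on $\Phi(t)$. Each coordinate is a sum of inner products of smooth decaying test functions against: (i) $\Re\bm\Psi_\Pa,\Im\bm\Psi_\Pa$, controlled by $\|\bm\Psi_\Pa\|_{L^2}\lesssim b^5+v^2\Pa$ from \eqref{ineq:Qberror}; (ii) the cubic terms $R_1(\eps),R_2(\eps)$, bounded by $\Pa\|\eps\|_{L^2}^2+\|\eps\|_{H^{1/2}}^3$; (iii) the drift terms $b\Lambda\eps$ and $v\cdot\nabla\eps$, of size $\Or(\Pa\|\eps\|_{L^2})$ after integration by parts; and (iv) the cross-terms $M_\pm(\eps)$, which by $M_\pm(\eps)=L_\pm\eps_i+\Or(\Pa|\eps|)$, self-adjointness of $L_\pm$, and the orthogonality conditions on $\eps$ reduce to $\Or(\Pa^2\|\eps\|_{L^2})$. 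Adding the contributions coming from higher-order $\Pa$-corrections of the test functions produces the $b^4+v^2+v\Pa^2$ tail, while the $\lambda^2$ term enters only in the phase component via the almost-critical-mass identity $(\eps_1,Q)=-\tfrac12\|\eps\|_{L^2}^2+\Or(\lambda^2+b^4+v^2+v\Pa^2)$ derived from \eqref{eq:control_mass}. Inverting $A(\Pa)$ yields the generic bound.

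The main obstacle, and the whole reason for the careful definition \eqref{def:rho} of $\rho$, is the improved estimate for $\lambda_s/\lambda+b$. Writing out the equation obtained by differentiating the orthogonality involving $\rho$, the principal $\Or(b^2)$ and $\Or(bv)$ contributions produced by $\Re\bm\Psi_\Pa,\Im\bm\Psi_\Pa$, the drift pairings $(b\Lambda\eps_i,\rho_j),(v\cdot\nabla\eps_i,\rho_j)$, and $\partial_s\rho_i$ combine \emph{exactly} with the right-hand side of the $\rho_2$-equation in \eqref{def:rho}; after repeated use of $L_+\rho_1=S_1$, $L_-S_1=\Lambda Q$, $L_-G_1=-\nabla Q$, the parity of $S_1,T_2,\rho_1$ versus $G_1,F_2$, and the solvability identity $(S_1,S_1)=-2(Q,T_2)$ from \eqref{eq:R30_magic3}, these terms cancel algebraically. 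Only residuals of size $b^5+v^2\Pa$ from $\bm\Psi_\Pa$ together with the generic $\eps$-contributions remain. I expect the verification of this cancellation to be by far the lengthiest and most delicate step; once it is done, reading off the corresponding row of $A(\Pa)^{-1}\Phi(t)$ yields the improved bound.
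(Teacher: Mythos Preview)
Your overall architecture (differentiate the five orthogonality conditions, assemble a $5\times 5$ system $A(\Pa)\Mod(t)=\Phi(t)$ with $A(0)$ invertible, estimate $\Phi$) is correct and matches the paper. However, your account of the \emph{improved} bound for $\lambda_s/\lambda+b$ is wrong, and this is a genuine conceptual gap.

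The $\rho$ orthogonality \eqref{eq:ortho3} does \emph{not} control $\lambda_s/\lambda+b$; it controls $\tilde\gamma_s$. (You can see this from the Jacobian in Appendix~\ref{sec:modeqn}: the only nonzero entry in the $\rho$-row at $\Pa=0$ is $\partial\sigma^3/\partial\gamma_1=-(Q,\rho_1)$.) The careful definition \eqref{def:rho} of $\rho_2$ is designed so that, when you project $M_\pm(\eps)-b\Lambda\eps+v\cdot\nabla\eps$ onto $\rho$, the linear-in-$\eps$ terms collapse to $-(\eps_2,\partial_b\Sigma_\Pa)+(\eps_1,\partial_b\Theta_\Pa)+\Or(\Pa^2\|\eps\|_{L^2})$, which vanishes by \eqref{eq:ortho2}. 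This is needed for the \emph{generic} bound on $\tilde\gamma_s$, not for the improved $\lambda$-bound.

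The improved bound actually comes from the projection onto $\partial_b Q_\Pa$ (the row corresponding to \eqref{eq:ortho2}), and the mechanism is much simpler than the cancellation you describe. Two things happen there: (i) the coefficient of $\tilde\gamma_s$ is $(\Theta_\Pa,\partial_b\Theta_\Pa)+(\Sigma_\Pa,\partial_b\Sigma_\Pa)=b[(S_1,S_1)+2(Q,T_2)]+v(F_2,Q)+\Or(\Pa^2)=\Or(\Pa^2)$, precisely because of the identity $(S_1,S_1)+2(Q,T_2)=0$ from \eqref{eq:R30_magic2} and the oddness of $F_2$; and (ii) no mass term $\Re(\eps,Q_\Pa)$ appears in this row (unlike the $\Lambda Q_\Pa$ row governing $b_s$), so the $\lambda^2+b^4+v^2+v\Pa^2$ tail coming from \eqref{eq:control_mass} is absent. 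Since $\|\Psi_\Pa\|\lesssim b^5+v^2\Pa$ already, the right-hand side of this single row is $\Or(b^5+v^2\Pa+\Pa^2\|\eps\|_{L^2}+\|\eps\|_{L^2}^2+\|\eps\|_{H^{1/2}}^3)$, and the off-diagonal coupling to the other rows is $\Or(\Pa)\cdot|\Mod(t)|$, which after plugging in the generic bound gives the stated improvement. So your ``lengthiest and most delicate step'' is not needed; you should instead read the improved bound directly off the $\partial_b Q_\Pa$ row.
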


\begin{proof}

We divide the proof into the following steps, where we also make use of the estimates \eqref{eq:inner1}--\eqref{eq:inner5}, which are shown in Lemma \ref{lem:modequations} in the Appendix below.

\medskip
{\bf Step 1: Law for $b$.} 
We project the equations \eqref{eq:eps1} and \eqref{eq:eps2} onto $-\Lambda \Theta_b$ and $\Lambda \Sigma_b$, respectively. Adding this and using \eqref{eq:inner1} yields after some calculation (using also the condition \eqref{eq:ortho1}):
\begin{align*}
& - \left (b_s + \frac{1}{2} b^2 \right ) \left ( (L_-S_1, S_1) + \Or (\Pa^2) \right )  + \left ( \frac{\alpha_s}{\lambda} - v \right ) \Or(\Pa) \nonumber \\
& = \Re( \eps, Q_\Pa) + (R_2(\eps), \Lambda \Sigma_\Pa) + (R_1(\eps), \Lambda \Theta_\Pa)  \\
& \quad - (\Im (\Psi_\Pa), \Lambda \Theta_\Pa) + (\Re(\Psi_\Pa), \Lambda \Sigma_\Pa) \\ & \quad + \Or \left ((\Pa^2 + \left | \Mod(t) \right |) ( \| \eps \|_{L^2} + \Pa^2 )  \right ) .
\end{align*}
Here we also used that $(\partial_v \Sigma, \Lambda \Theta) - (\partial_v \Theta, \Lambda \Sigma ) = (G_1, \Lambda Q) + \Or(\Pa^2) = \Or(\Pa^2)$, since $G_1 = -L_-^{-1}\nabla Q$ is odd and $\Lambda Q$ is even, and hence $(G_1, \Lambda Q)=0$. 
Next, we recall from Proposition \ref{prop:Qb_existence} the universal constants
$$
e_1 = \frac{1}{2} (L_-S_1, S_1) > 0, \quad p_1 = 2 (L_- G_1, G_1) > 0.
$$
Now by using that 
$$
2\Re( \eps, Q_\Pa) = -  \int | \eps |^2 + \left ( \int |u|^2 - \int Q^2 \right ) + \Or (b^4+v^2+v \Pa^2) ,
$$
we deduce that
\begin{align}
& - \left (b_s + \frac{1}{2} b^2 \right ) \left ( 2 e_1 + \Or (\Pa^2) \right )  + \left ( \frac{\alpha_s}{\lambda} - v \right ) \left ( \frac{1}{2} p_1 + \Or(\Pa^2) \right )  \nonumber \\
& = - \frac{1}{2} \int |\eps|^2 + (R_2(\eps), \Lambda \Sigma_b) + (R_1(\eps), \Lambda \Theta_b) \nonumber \\
& \quad \Or \left ((\Pa^2 + \left | \Mod(t) \right |) ( \| \eps \|_{L^2}  + \Pa^2) + \left | \| u \|_{L^2}^2 - \| Q \|_{L^2}^2 \right | + b^4  + v^2 + v \Pa^2 \right )  \nonumber.
\end{align}

\medskip
{\bf Step 2: Law for $\lambda$.}
By projecting \eqref{eq:eps1} and \eqref{eq:eps2} onto $-\partial_b \Theta_\Pa$ and $\partial_b \Sigma_\Pa$ respectively, we obtain from adding and using \eqref{eq:ortho2} that
\begin{align*}
& \left ( \frac{\lambda_s}{\lambda} + b \right ) \left ( 2 e_1 + \Or (\Pa^2) \right )  + (v_s + bv) \Or(\Pa) =   + (R_2(\eps), \partial_b \Sigma_b) + (R_1(\eps), \partial_b \Theta_b) \\
& \quad + \Or \left ( (\Pa^2 + \left | \Mod(t) \right |) ( \| \eps \|_{L^2} + \Pa^2) + b^5 +  v^2 \Pa \right ) . 
\end{align*}
Here we also used that $(\Theta_b, \partial_b \Theta_b)+ (\Sigma_b, \partial_b \Sigma_b) = b (S_1, S_1) + 2b (Q, T_2) + v (F_2, Q) + \Or(\Pa^2) = \Or(\Pa^2)$, since $(S_1,S_1)+ 2(T_2,Q) =0$ and $(F_2, Q)=0$ because is $F_2$ is odd. Note also here that
$$
-(\nabla \Sigma, \partial_b \Theta) + (\nabla \Theta, \partial_b \Sigma) = -(\nabla Q, S_1) + \Or(\Pa^2) = \Or(\Pa^2),
$$
because $\nabla Q$ is odd and $S_1$ is even.

\medskip
{\bf Step 3: Law for $\tgamma$.} 
Now, we project \eqref{eq:eps1} and \eqref{eq:eps2} onto $-\rho_2$ and $\rho_1$, respectively. Adding this gives us
\begin{align*}
\tgamma_s ( (Q, \rho_1) + \Or(\Pa^2) ) & = - \left ( b_s + \frac{1}{2} b^2 \right ) \left ( (S_1, \rho_1) + \Or (\Pa^2) \right ) + \left ( \frac{\lambda_s}{\lambda} +b \right ) \Or (\Pa) \\
& \quad (R_2(\eps), \partial_b \Sigma_\Pa) + (R_1(\eps), \partial_b \Theta_\Pa) \\
& \quad + \Or \left ( (\Pa^2 + \left | \Mod(t) \right |) \| \eps \|_{L^2} + b^5 + v^2 \Pa \right ). 
\end{align*}
Note here also that $(\partial_v \Theta, \rho_1) = (G_1, \rho_1) = 0$ since $G_1$ is odd and $\rho_1 = L_+^{-1} S_1$ is even. Note also that $(Q, \rho_1) = (L_-S_1, S_1)=2 e_1$, which follows from $L_+ \Lambda Q = -Q$ and the definition of $\rho_1$.

\medskip
{\bf Step 4: Law for $v$.}
We project \eqref{eq:eps1} and \eqref{eq:eps2} onto $-\nabla \Theta_\Pa$ and $\nabla \Sigma_\Pa$, respectively. This gives us
\begin{align*}
\left ( v_s + bv \right ) \left ( -p_1 + \Or(\Pa^2) \right ) + \left ( b_s + \frac{1}{2} b^2 \right ) \Or(\Pa) & = (R_2(\eps), \nabla \Sigma_\Pa) + (R_1(\eps), \nabla \Theta_\Pa) \\
& \quad + \Or \left ( (\Pa^2 + \left | \Mod(t) \right |) \| \eps \|_{L^2} + b^5 + v^2 \Pa \right ). 
\end{align*}

\medskip
{\bf Step 5: Law for $\alpha$.}
Finally, if we project \eqref{eq:eps1} and \eqref{eq:eps2} onto $-\partial_v \Theta_\Pa$ and $\partial_v \Sigma_\Pa$, respectively. This yields
\begin{align*}
\left ( b_s + \frac{1}{2} b^2 \right ) \Or(\Pa) + \left ( \frac{\alpha_s}{\lambda} - v \right ) \left ( p_1 + \Or (\Pa^2) \right ) & =  (R_2(\eps), \partial_v \Sigma_\Pa) + (R_1(\eps), \partial_v \Theta_\Pa) \\
& \quad + \Or \left ( (\Pa^2 + \left | \Mod(t) \right |) \| \eps \|_{L^2} + b^4 + v^2 + v \Pa^2 \right ). 
\end{align*}
Note here $(-\Lambda \Sigma, \partial_v \Theta) + (\Lambda \Theta, \partial_v \Sigma) = (\Lambda Q, G_1) + \Or(\Pa^2) = \Or (\Pa^2)$ holds, since $\Lambda Q$ is even and $G_1$ is odd.

\medskip
{\bf Step 6: Conclusion.}
We collect the previous equations and estimate the nonlinear terms in $\eps$ by Sobolev inequalities. This gives us
\begin{align*}
(A + B) \Mod(t) & = \Or \left ( (\Pa^2 + \left | \Mod(t) \right |) \| \eps \|_{L^2} + \| \eps \|_{L^2}^2  + \| \eps \|_{H^{1/2}}^3 + \right . \\
 & \qquad \left . + \left | \| u \|_{L^2}^2 - \| Q \|_{L^2}^2 \right | + b^4 + v^2 + v \Pa^2 \right ).
\end{align*}
Here $A=O(1)$ is in invertible $5 \times 5$-matrix, whereas $B= \Or (\Pa)$ is some $5 \times 5$-matrix that is polynomial in $\Pa=(b,v)$. For $|\Pa|  \ll 1$, we can thus invert $A+B$ by Taylor expansion and derive the estimate for $ \Mod(t)$ stated in Lemma \ref{lem:Mod_bound}.  (Note also that we assumed the bound \eqref{ineq:almost_crit_mass}.)

Finally, we deduce the improved bound for $\left | \frac{\lambda_s}{\lambda} +b \right |$, by recalling the estimate derived in {\bf Step 2} above. \end{proof}

\section{Refined Energy Bounds} \label{sec:refenergy}

In this section, we establish a refined energy-viral type estimate, which will be a key ingredient in the compactness argument to construct minimal mass blowup solutions. 

Let $u=u(t,x)$ be a solution to \eqref{eq:wave} on the time interval $[t_0, 0)$ and suppose that $w$ is an approximate solution to \eqref{eq:wave} such that
\begin{equation}
i \partial_t w - D w + |w|^2 w = \psi,
\end{equation}
with the a-priori bounds
\begin{equation} \label{apriori:w}
\| w \|_{L^2} \lesssim 1, \quad \| D^{\frac 1 2} w \|_{L^2} \lesssim \lambda^{-\frac 1 2}, \quad \| D w \|_{L^2} \lesssim \lambda^{-1} .
\end{equation}
We decompose $u = w + \tilde{u}$ and hence $\tilde{u}$ satisfies
\begin{equation} \label{eq:w}
i \partial_t \tilde{u} - D \tilde{u} + (|u|^2 u - |w|^2 w) = - \psi,
\end{equation}
where we assume the a-priori bounds  
\begin{equation} \label{apriori:u}
\| D^{\frac{1}{2}+\eps} \tilde{u} \|_{L^2} \lesssim 1, \quad \| D^{\frac 1 2} \tilde{u} \|_{L^2} \lesssim \lambda^{\frac 1 2}, \quad \| \tilde{u} \|_{L^2} \lesssim \lambda,
\end{equation}
with some fixed $\eps\in (0, \frac{1}{4})$, as well as 
\begin{equation} \label{apriori:mod}
\left | \lambda_t + b \right | \lesssim \lambda^2, \quad b \sim \lambda^{\frac 1 2}, \quad \left | b_t \right | \lesssim 1, \quad \left | \alpha_t \right | \lesssim \lambda.
\end{equation}
Next, let $\phi : \R \to \R$ be a smooth and even function with the following properties\footnote{Since $\phi(x)$ is even, it clearly suffices to consider non-negative $x \geq 0$.} 
\begin{equation}
\phi'(x) = \left \{ \begin{array}{ll} x & \quad \mbox{for $0 \leq x \leq 1$,} \\
3-e^{-|x|} & \quad \mbox{for $x \geq 2$}, \end{array} \right . 
\end{equation}
and the convexity condition
\begin{equation}
\phi''(x) \geq 0 \quad \mbox{for $x \geq 0$}.
\end{equation}
Furthermore, we denote
\begin{equation*}
F(u) = \frac{1}{4} |u |^4, \quad f(u) = |u|^2 u, \quad F'(u) \cdot h = \Re ( f(u) \overline{h} ) .
\end{equation*}
Let $A > 0$ be a large constant (to be chosen later) and define the quantity
\begin{align} \label{def:Ifunc}
\In_A(u) & := \frac{1}{2} \int |D^{\frac 1 2} \tilde{u}|^2 + \frac{1}{2} \int \frac{|\tilde{u}|^2}{\lambda} - \int  \left [ F(w+ \tilde{u}) - F(w) - F'(w) \cdot \tilde{u} \right ] \\
& \quad + \frac{b}{2}  \Im \left ( \int A \nabla \phi \left ( \frac{x-\alpha}{A \lambda} \right) \cdot \nabla \tilde{u} \overline{\tilde{u}} \right ) . \nonumber
\end{align}

Our strategy will be to use the preceding functional to bootstrap control over $\|\tilde{u}\|_{H^{\frac{1}{2}}}$, see Lemma~\ref{lem:back}, and then to invoke a separate argument to improve control over $\|D^{\frac{1}{2}+\eps} \tilde{u} \|_{L^2}$. In the following lemma, control over the latter norm will help us bound certain error terms.

\begin{lemma}[Localized energy/virial estimate] \label{lem:morawetz}
Let $\In_A$ be as above. Then we have
\begin{align*}
\frac{d \In_A}{dt} &= - \frac{1}{\lambda} \Im \left ( \int w^2 \overline{\tilde{u}^2} \right ) - \Re \left ( \int w_t \overline{ ( 2 | \tilde{u}^2 | w + \tilde{u}^2 \overline{w})} \right ) \\
& \quad + \frac{b}{2\lambda}   \int \frac{|\tilde{u}|^2}{\lambda} + \frac{b}{2\lambda}  \int_{s=0}^{+\infty}  \sqrt{s} \int_{\R} \Delta \phi \left ( \frac{x-\alpha}{A \lambda} \right ) | \nabla \tilde{u}_s|^2 \, dx \,  d s   \\
& \quad - \frac{1}{8} \frac{b}{A^2 \lambda^3} \int_{s=0}^{+\infty} \sqrt{s} \int_{\R} \Delta^2 \phi \left ( \frac{x-\alpha}{A \lambda} \right ) | \tilde{u}_s |^2 \, dx \, ds  \\
& \quad + b \Re \left ( \int A \nabla \phi \left ( \frac{x-\alpha}{A \lambda} \right ) ( 2 |\tilde{u}|^2 w + \tilde{u}^2 \overline{w} ) \cdot \overline{\nabla w} \right ) \\
& \quad + \Im \left ( \int \left [ -D \psi - \frac{\psi}{\lambda} + (2 |w|^2 \psi- w^2 \overline{\psi}) + i b A \nabla \phi \left ( \frac{x-\alpha}{A \lambda} \right ) \cdot \nabla \psi  \right .  \right . \\
& \quad \left . \left . + i \frac{b}{2 \lambda} \Delta \phi \left ( \frac{x-\alpha}{A \lambda} \right )  \psi \right ] \overline{\tilde{u}} \right )  \\
& \quad + \Or \left ( \lambda \| \psi \|_{L^2}^2 + \lambda^{-1} \| \tilde{u}\|_{L^2}^2 + \loger{\| \tilde{u} \|_{H^{1/2}}} \| \tilde{u} \|_{H^{1/2}}^2 \right )  .
\end{align*}
Here we denote $\tilde{u}_s := \sqrt{\frac{2}{\pi}} \frac{1}{-\Delta+s} \tilde{u}$ with $s > 0$.
\end{lemma}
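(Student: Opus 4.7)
The proof is a direct, if lengthy, computation: differentiate each of the four summands of $\In_A$ in time, substitute the equation $i\partial_t \tilde{u} = D\tilde{u} - N - \psi$ with $N := f(u) - f(w) = 2|w|^2\tilde{u} + w^2\bar{\tilde{u}} + 2w|\tilde{u}|^2 + \tilde{u}^2\bar{w} + |\tilde{u}|^2\tilde{u}$, and collect terms. Throughout I write $H(w,\tilde{u}) := F(w+\tilde{u}) - F(w) - F'(w)\tilde{u}$ for the quadratic remainder of the potential energy, and repeatedly use the a priori bounds \eqref{apriori:w}, \eqref{apriori:u}, \eqref{apriori:mod}. The proof naturally splits into three blocks: the scalar energy block $\tfrac12\|D^{1/2}\tilde{u}\|^2 - \int H$, the weighted mass block $\tfrac{1}{2\lambda}\|\tilde{u}\|^2$, and the localized Morawetz block.

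For the energy block, a standard cancellation exploiting the equation yields
\[
\frac{d}{dt}\Big[\tfrac12\|D^{1/2}\tilde{u}\|^2 - \int H(w,\tilde{u})\Big] = \Re\!\int \bar\psi\,\partial_t\tilde{u} \;-\; \Re\!\int N\,\overline{w_t} \;+\; \Re\!\int \partial_t f(w)\,\bar{\tilde{u}}.
\]
The decisive algebraic identity is that the \emph{linear}-in-$\tilde{u}$ parts of the last two integrals cancel: $\partial_t f(w) = 2|w|^2 w_t + w^2\bar{w}_t$ exactly kills the $(2|w|^2\tilde{u}+w^2\bar{\tilde{u}})\,\overline{w_t}$ contribution after taking real parts. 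What survives is the quadratic $-\Re\!\int w_t\,\overline{(2|\tilde{u}|^2 w + \tilde{u}^2\bar w)}$ of the statement, plus a cubic tail $-\Re\!\int |\tilde{u}|^2\tilde{u}\,\overline{w_t}$ absorbed into $\loger{\|\tilde{u}\|_{H^{1/2}}}\|\tilde{u}\|_{H^{1/2}}^2$ via the $L^\infty$-type control on $w_t$ (read off from the equation for $w$ and \eqref{apriori:w}) together with a one-dimensional log-Sobolev interpolation between $\|\tilde{u}\|_{L^2}$ and $\|D^{1/2+\eps}\tilde{u}\|_{L^2}$ from \eqref{apriori:u}. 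For the mass block, $\frac{d}{dt}\tfrac{1}{2\lambda}\|\tilde{u}\|_{L^2}^2$ produces the positive drift $\tfrac{b}{2\lambda}\int|\tilde{u}|^2/\lambda$ via $\lambda_t = -b + \Or(\lambda^2)$, the remainder $-\tfrac{1}{\lambda}\Im\!\int w^2\,\overline{\tilde{u}^2}$ from the linear-in-$\tilde{u}$ part of $N$ (the $|w|^2|\tilde{u}|^2$ piece having real integrand), a $\psi$-remainder $-\tfrac{1}{\lambda}\Im\!\int \bar\psi\,\tilde{u}$, and cubic $N$-contributions going into the error; the $\Or(\lambda^2)$ remainder of $\lambda_t+b$ produces the $\lambda^{-1}\|\tilde{u}\|_{L^2}^2$ error.

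The heart of the proof is the Morawetz block $\tfrac{b}{2}\Im\!\int A\nabla\phi\bigl(\tfrac{x-\alpha}{A\lambda}\bigr)\cdot\nabla\tilde{u}\,\overline{\tilde{u}}$. I distribute $\partial_t$ onto $b$, onto the weight, and onto $\tilde{u}$; the $b_t$- and $\partial_t\phi_A$-pieces are controlled by \eqref{apriori:mod} and land in the error. Substituting the equation into $\partial_t\tilde{u}$, the crucial term from $D\tilde{u}$ forces me to handle the commutator of $D$ with the multiplier $a\partial_x + \tfrac12 a'$, $a := A\phi'\bigl(\tfrac{x-\alpha}{A\lambda}\bigr)$. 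Since $D$ is nonlocal, I use the subordination representation
\[
D = \frac{1}{\pi}\int_0^{\infty} \frac{-\Delta}{-\Delta + s}\,\frac{ds}{\sqrt{s}},\qquad \tilde{u}_s := \sqrt{\tfrac{2}{\pi}}\,(-\Delta + s)^{-1}\tilde{u},
\]
and reduce the pairing $\Re\!\int D\tilde{u}\,(a'\bar{\tilde{u}} + 2a\partial_x\bar{\tilde{u}})$ to the classical one-dimensional Morawetz quadratic form applied at the $\tilde{u}_s$-level, $\int_0^\infty\!\sqrt{s}\!\int\!\bigl[a'|\partial_x\tilde{u}_s|^2 - \tfrac14 a'''|\tilde{u}_s|^2\bigr]\,dx\,ds$. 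The chain rule gives $a' = \Delta\phi(\cdot)/\lambda$ and $a''' = \Delta^2\phi(\cdot)/(A^2\lambda^3)$, which together with the $b/2$ prefactor yield exactly the two $\tilde{u}_s$-integrals in the statement (the first positive since $\phi$ is convex). The nonlinear contribution from $N$ in the Morawetz identity, after integration by parts putting the gradient on $w$, produces the term $b\Re\!\int A\nabla\phi(\cdot)(2|\tilde{u}|^2 w + \tilde{u}^2\bar w)\cdot \overline{\nabla w}$, the linear-in-$\tilde{u}$ part of $N$ cancelling again by the algebra used in the energy block (the residual comes precisely from pushing the derivative off $|\tilde u|^2$ onto $|w|^2$ and $w^2$).

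Finally, one gathers all $\psi$-terms generated by the three blocks—namely $-D\psi$ and $2|w|^2\psi - w^2\bar\psi$ from the energy block, $-\psi/\lambda$ from the mass block, and $ibA\nabla\phi\cdot\nabla\psi + \tfrac{ib}{2\lambda}\Delta\phi\,\psi$ from the Morawetz block—moving derivatives off $\tilde{u}$ onto $\psi$ via integration by parts, which assembles them into the single $\Im$-bracket of the statement paired against $\bar{\tilde{u}}$. Purely $\psi\cdot\psi$ leftovers are $\Or(\lambda\|\psi\|_{L^2}^2)$ by Cauchy--Schwarz with a $\lambda$-weight. The main obstacle I expect is the rigorous implementation of the fractional Morawetz identity: justifying the Fubini interchange of $\int_0^\infty\!ds$ and $\int_{\R}\!dx$, checking that the operator $(-\Delta + s)^{-1}$ produces $\tilde{u}_s$ smooth enough for the local second-order Morawetz identity to be applied pointwise in $s$, and bounding the cubic and $w$-weighted remainders uniformly in $s$. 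The slight regularity improvement $\|D^{1/2+\eps}\tilde{u}\|_{L^2}\lesssim 1$ in \eqref{apriori:u} is precisely what makes these bounds close with only a logarithmic loss, accounting for the $\loger{\|\tilde{u}\|_{H^{1/2}}}$ factor in the error term.
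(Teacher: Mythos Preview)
Your approach is essentially the same as the paper's: split into energy/mass and Morawetz blocks, use the subordination representation of $D$ to reduce the fractional commutator to the classical local Morawetz identity at the $\tilde u_s$-level, integrate by parts to shift derivatives onto $w$ in the quadratic nonlinear remainder, and gather the $\psi$-terms. The organization differs only cosmetically (the paper treats $\tfrac12\|D^{1/2}\tilde u\|^2 + \tfrac{1}{2\lambda}\|\tilde u\|^2 - \int H$ as a single block).

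Two small imprecisions to flag. First, ``$L^\infty$-type control on $w_t$'' does not work as stated for the cubic tail $\Re\!\int|\tilde u|^2\tilde u\,\overline{w_t}$: the piece $Dw$ of $w_t$ is not in $L^\infty$ under \eqref{apriori:w}. The paper instead splits off half a derivative, pairing $D^{3/4}w$ against $D^{1/4}(|\tilde u|^2\tilde u)$ and closing via $\|D^{3/4}w\|_{L^2}\lesssim\lambda^{-3/4}$ together with a fractional product estimate $\||\tilde u|^2\tilde u\|_{\dot H^{1/4}}\lesssim\|\tilde u\|_{L^2}^{1/2}\|\tilde u\|_{\dot H^{1/2}}^{5/2}$; this term then lands in $\|\tilde u\|_{H^{1/2}}^2$ without a log. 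Second, the $\log^{1/2}$ loss in the error actually originates from the Morawetz-block cubic remainders (where $\|\tilde u\|_{L^\infty}$ and $\|w\|_{L^\infty}$ are controlled via the Brezis--Gallou\"et inequality against $\|\tilde u\|_{H^{1/2+\eps}}$), not from the energy-block cubic; and the $\lambda\|\psi\|_{L^2}^2$ error comes from Young-weighting $\psi$ against quadratic-in-$\tilde u$ remainders rather than from any genuine $\psi\cdot\psi$ pairing. None of this affects the structure of your argument.
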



\begin{proof}[Proof of Lemma \ref{lem:morawetz}]
We divide the proof into two main steps as follows.

\medskip
{\bf Step 1: Estimating the energy part.}
Using \eqref{eq:w}, a computation shows that
\begin{align} \label{eq:mora_energy1}
& \frac{d}{dt} \left \{ \frac{1}{2} \int | D^{\frac 1 2} \tilde{u}|^2 + \frac 1 2 \int \frac{ | \tilde{u} |^2}{\lambda} - \int \left [ F(w+ \tilde{u}) - F(w) - F'(w) \cdot \tilde{u} \right ]  \right \} \\
& =   \Re \left ( \partial_t \tilde{u}, \overline{D \tilde{u} + \frac{1}{\lambda} \tilde{u} - (f(u) - f(w))} \right ) - \frac{\lambda_t}{2 \lambda^2} \int |\tilde{u}|^2 \nonumber  \\
& \quad  - \Re \left ( \partial_t w, \overline{(f(\tilde{u}+w)-f(w)- f'(w) \cdot \tilde{u} )} \right ) \nonumber \\
& = - \Im \left ( \psi, \overline{(D \tilde{u} + \frac{1}{\lambda} \tilde{u} - (f(u) - f(\tilde{u})))} \right ) - \frac{1}{\lambda} \Im \left ( f(u) - f(\tilde{u}), \overline{\tilde{u}} \right ) \nonumber \\
& \quad - \frac{\lambda_t}{2 \lambda^2} \int |\tilde{u}|^2 - \Re \left ( \partial_t w, \overline{f(\tilde{u}+w)-f(w) - f'(w) \cdot \tilde{u})} \right ) \nonumber  \\
& = - \Im \left ( \psi, \overline{D \tilde{u} + \frac{1}{\lambda} \tilde{u} - (2 |w^2| \tilde{u} - \overline{\tilde{u}} w^2)} \right ) - \frac{1}{\lambda} \int \overline{\tilde{u}^2} w^2 \nonumber \\
& \quad - \frac{\lambda_t}{2 \lambda^2} \int |\tilde{u}|^2 - \Re \left ( \partial_t w, \overline { ( \overline{w} \tilde{u}^2 +2 w |\tilde{u}|^2) } \right ) \nonumber \\
& \quad - \Im \left ( \psi- \frac{1}{\lambda} \tilde{u}, \overline{ (f(w+\tilde{u}) - f(w) - f'(w) \cdot \tilde{u}) } \right ) - \Re \left ( \partial_t w, \overline{\tilde{u} |\tilde{u}|^2} \right ),  \nonumber
\end{align}
where we denote $f'(w) \cdot \tilde{u} = 2 |w|^2 \tilde{u} + w^2 \overline{\tilde{u}}$. From \eqref{apriori:mod} we obtain that
\begin{equation} \label{eq:mora_energy2}
- \frac{\lambda_t}{2 \lambda^2} \int |\tilde{u}|^2 = \frac{b}{2 \lambda} \int \frac{ |\tilde{u}|^2}{\lambda} - \frac{1}{2 \lambda^2}  \left ( \lambda_t + b \right ) \| \tilde{u} \|_{L^2}^2 =   \frac{b}{2 \lambda} \int \frac{ |\tilde{u}|^2}{\lambda} + \Or \left ( \| \tilde{u} \|_{H^{1/2}}^2 \right ) .
\end{equation}
Next, we estimate
\begin{align} \label{ineq:mora_energy1}
& \quad \left | \Im \left ( \psi - \frac{1}{\lambda} \tilde{u}, \overline{(f(w+\tilde{u}) - f(w) - f'(w) \cdot \tilde{u})} \right ) \right | \\
& = \left | \Im \left ( \psi - \frac{1}{\lambda} \tilde{u}, \overline{ ( \tilde{u}^2 \overline{w} + 2 | \tilde{u} |^2 w + |\tilde{u}|^2 \tilde{u} ) } \right ) \right | \nonumber \\
& \lesssim (\| \psi \|_{L^2} + \lambda^{-1} \| \tilde{u} \|_{L^2} ) \| \tilde{u} \|_{L^6}^2 ( \| w \|_{L^6} + \| \tilde{u} \|_{L^6} )  \nonumber \\
& \lesssim( \| \psi \|_{L^2} + \lambda^{-1} \| \tilde{u} \|_{L^2} ) \|\tilde{u}\|_{\dot{H}^{1/2}}^{\frac{4}{3}}\|\tilde{u}\|_{L^2}^{\frac{2}{3}} ( \lambda^{-\frac 1 3} + \lambda^{\frac 2 3} ) \nonumber \\
& \lesssim \lambda \| \psi \|_{L^2}^2 + \lambda^{-1} \| \tilde{u} \|_{L^2}^2 + \| \tilde{u} \|_{H^{1/2}}^2. \nonumber
\end{align}
using the interpolation estimate $\| f \|_{L^6} \lesssim \| f \|_{\dot{H}^{1/2}}^{2/3} \| f \|_{L^2}^{1/3}$ in $\R$ together with the assumed a-priori bounds \eqref{apriori:w} and \eqref{apriori:u}. For the cubic terms hitting $\partial_t w$, we use the equation for $w$ and the bounds \eqref{apriori:w} and \eqref{apriori:u}. This leads us to
\begin{align} \label{ineq:mora_energy2}
\left | \int \partial_t w \overline{ |\tilde{u}|^2 \tilde{u} } \right | & \lesssim \| w \|_{\dot{H}^{3/4}} \| |\tilde{u}|^2 \tilde{u} \|_{\dot{H}^{1/4}} + \| w \|_{L^6}^3 \| \tilde{u} \|_{L^6}^3 + \| \psi \|_{L^2} \| \tilde{u} \|_{L^6}^3 \\
& \lesssim \frac{1}{\lambda^{3/4}} \| \tilde{u} \|_{L^2}^{1/2} \| \tilde{u} \|_{\dot{H}^{1/2}}^{5/2} + \frac{1}{\lambda} \| \tilde{u} \|_{L^2} \| \tilde{u} \|_{\dot{H}^{1/2}}^2 + \| \psi \|_{L^2} \| \tilde{u} \|_{\dot{H}^{1/2}}^2 \| \tilde{u} \|_{L^2} \nonumber \\
& \lesssim \| \tilde{u} \|_{H^{1/2}}^2 + \lambda \| \psi \|_{L^2}^2 . \nonumber
\end{align}
Here we used the bound 
$$
\|  |f|^2 f \|_{\dot{H}^{1/4}} \lesssim \| |f|^2 \|_{L^4} \| D^{\frac 1 4} f \|_{L^4} \lesssim \| f \|_{L^8}^2 \| D^{\frac 1 2} f \|_{L^2} \lesssim \| f \|_{L^2}^{1/2} \| f \|_{\dot{H}^{1/2}}^{5/2},
$$
which follows from Sobolev embedding, the interpolation estimate $\| f \|_{L^8} \lesssim \| f \|_{L^2}^{1/4} \| f \|_{\dot{H}^{1/4}}^{3/4}$ in $\R$, and the fractional chain rule $\| D^s F(u) \|_{p} \lesssim \| F'(u) \|_{p_1} \| D^s u \|_{p_2}$ for any $F \in C^1(\C)$ with $0 < s \leq 1$ and $1 < p, p_1, p_2 < \infty$ such that $\frac{1}{p} = \frac{1}{p_1} + \frac{1}{p_2}$. 

We now insert \eqref{eq:mora_energy2}, \eqref{ineq:mora_energy1} and \eqref{ineq:mora_energy2} into \eqref{eq:mora_energy1}. Combined with the assumed a priori bounds on $\tilde{u}$, we conclude  
\begin{align*}
& \frac{d}{dt} \left \{ \frac{1}{2} \int | D^{\frac 1 2} \tilde{u}|^2 + \frac 1 2 \int \frac{ | \tilde{u} |^2}{\lambda} - \int \left [ F(w+ \tilde{u}) - F(w) - F'(w) \cdot \tilde{u} \right ]  \right \} \\
& = - \frac{1}{\lambda} \Im \left ( \int w^2 \overline{\tilde{u}^2} \right ) - \Re \left ( \int w_t \overline{ ( 2 | \tilde{u}^2 | w + \tilde{u}^2 \overline{w})} \right ) + \frac{b}{2 \lambda} \int \frac{|\tilde{u}|^2}{\lambda} \\
& \quad + \Im \left ( \int \left [ -D \psi - \frac{\psi}{\lambda} + (2|w|^2 \psi- w^2 \overline{\psi} )  \right ] \overline{\tilde{u}} \right ) \\
& \quad + \Or ( \lambda \| \psi \|_{L^2}^2 + \lambda^{-1} \| \tilde{u}\|_{L^2}^2 + \| \tilde{u} \|_{H^{1/2}}^2 ).
\end{align*}

\medskip
{\bf Step 2: Estimating the localized virial part.}
We set
\begin{equation}
\nabla \tilde{\phi}(t,x) := b A \nabla \phi \left ( \frac{x - \alpha}{A \lambda} \right ) .
\end{equation}
Then we obtain
\begin{align} \label{eq:mora_evo}
& \frac{1}{2} \frac{d}{dt} \left ( b \Im \left ( \int A \nabla \phi \left ( \frac{x}{A \lambda} \right ) \cdot \nabla \tilde{u} \overline{\tilde{u}} \right ) \right ) \\
& = \frac{1}{2} \Im \left ( \int (\partial_t \nabla \tilde{\phi} ) \cdot \nabla \tilde{u} \overline{\tilde{u}} \right ) + \frac{1}{2} \Im \left (  \int \nabla \tilde{\phi} \cdot \left ( ( \nabla \partial_t \tilde{u}) \overline{\tilde{u}} + \nabla \tilde{u} \partial_t \overline{\tilde{u}} \right ) \right ) . \nonumber
\end{align}
Using the bounds \eqref{apriori:mod}, we estimate
\begin{equation}
\left | \partial_t \nabla \tilde{\phi} \right | \lesssim |b_t| + b \left | \frac{\lambda_t}{\lambda} \right |  + \frac{b}{\lambda} \left| \alpha_t \right | \lesssim 1+ \frac{b}{\lambda} ( |b| + \left | \lambda_t + b\right | + \left | \alpha_t \right | ) \lesssim 1,
\end{equation}
\begin{equation}
 \left |\partial_t \Delta \tilde{\phi} \right | \lesssim \lambda^{-1} .
\end{equation}
Hence, by Lemma \ref{lem:leibniz}, we deduce that
\begin{equation}
\left | \Im \left ( \int (\partial_t \nabla \tilde{\phi}) \cdot \nabla \tilde{u} \overline{\tilde{u}} \right ) \right | \lesssim \| \tilde{u} \|_{\dot{H}^{1/2}}^2 + \lambda^{-1} \| \tilde{u}\|_{L^2}^2 .
\end{equation}

Now, we turn to the second term in \eqref{eq:mora_evo} containing the time derivative of $\tilde{u}$. To handle this term, it is expedient to write this using commutators $[A,B] \equiv AB-BA$. Moreover, it is convenient to adapt the notation
\begin{equation}
p = - i \nabla_x
\end{equation}
in the following and hence $D = |p|$. Using \eqref{eq:w} and that $D=D^*$ is self-adjoint, a calculation yields that

\begin{align} \label{eq:mora_manip}
 & \frac{1}{2} \Im \left (  \int \nabla \tilde{\phi} \cdot  \left ( ( \nabla \partial_t \tilde{u} ) \overline{\tilde{u}} + \nabla \tilde{u} \partial_t \overline{\tilde{u}} \right ) \right )  = - \frac{1}{4} \Re \left ( \int \overline{\tilde{u}} \left [-i |p|, \nabla \tilde{\phi} \cdot p + p  \cdot \nabla \tilde{\phi} \right ] \tilde{u} \right )  \\
 & \quad - b \Re \left ( \int  (|u|^2 u - |w|^2 w) A\nabla \phi \left ( \frac{x-\alpha}{A \lambda} \right )  \cdot \overline{\nabla \tilde{u}} \right ) \nonumber \\ &\quad - \frac{1}{2} \frac{b}{\lambda} \Re \left ( \int (|u|^2 u- |w|^2 w) \Delta \phi \left ( \frac{x-\alpha}{A \lambda} \right ) |\tilde{u}|^2 \right ) \nonumber \\ & \quad- b \Re \left ( \int \psi \nabla \phi \left ( \frac{x-\alpha}{A \lambda} \right )  \cdot \overline{\nabla \tilde{u}} \right ) \nonumber - \frac{1}{2} \frac{b}{\lambda} \Re \left ( \int \psi \Delta \phi \left ( \frac{x-\alpha}{A \lambda} \right ) \overline{\tilde{u}} \right ). \nonumber
\end{align}  
Next, we rewrite the commutator by using some identities from functional calculus. Here, we recall the known formula
\begin{equation}
x^{\beta} = \frac{\sin ( \pi \beta)}{\pi} \int_0^\infty s^{\beta-1} \frac{x}{x + s}  \, ds,
\end{equation}
for $x > 0$ and $0 < \beta < 1$. Using this formula and the spectral theorem applied to the self-adjoint operator $p^2$, we readily obtain the commutator formula
\begin{equation}
[|p|^{\alpha}, B] = \frac{\sin (\pi \alpha /2) }{\pi} \int_0^{+\infty} s^{\frac \alpha 2} \frac{1}{p^2+s} [p^2, B] \frac{1}{p^2+s} \, ds, 
\end{equation}
for any $0 < \alpha < 2$ and any (possibly unbounded) self-adjoint operator $B$ whose domain contains $\mathcal{S}(\R)$. In particular, we deduce that
\begin{equation}
\left [|p|, \nabla \tilde{\phi} \cdot p + p \cdot \nabla \tilde{\phi} \right ] = \frac{1}{\pi} \int_0^\infty \sqrt{s} \frac{1}{p^2+s} \left [p^2, \nabla \tilde{\phi} \cdot p + p \cdot \nabla \tilde{\phi} \right ] \frac{1}{p^2 + s}   \, d s ,
\end{equation}   
 Next, we recall the known formula
\begin{equation}
\left [p^2, \nabla \tilde{\phi} \cdot p + p \cdot \nabla \tilde{\phi} \right ] = -4i p \cdot \Delta \tilde{\phi} p + i \Delta^2 \tilde{\phi} ,
\end{equation}
for any smooth function $\tilde{\phi}$ on $\R$. We now define the auxiliary function
\begin{equation} \label{def:Us}
\tilde{u}_s(t,x) := \sqrt{\frac{2}{\pi}} \frac{1}{-\Delta + s} \tilde{u}(t,x), \quad \mbox{for $s > 0$}.
\end{equation}
Hence, by construction, we have that $\tilde{u}_s$ solves the elliptic equation
\begin{equation}
-\Delta \tilde{u}_s + s \tilde{u}_s = \sqrt{\frac{2}{\pi}} \tilde{u} .
\end{equation}
Note that the integral kernel for the resolvent $(-\Delta+s)^{-1}$ in $d=1$ dimension is explicitly given by $\frac{1}{2 \sqrt{s}} e^{-\sqrt{s} |x-y|}$. Hence, as an aside, we remark that we have the convolution formula
\begin{equation}
\tilde{u}_s(t,x) = \frac{1}{\sqrt{2 \pi s}} \int e^{- \sqrt{s} |x-y|} \tilde{u}(t,y) \, dy  .
\end{equation}
Recalling that $\nabla \tilde{\phi}(t,x) = b A \nabla \phi \left ( \frac{x-\alpha}{A \lambda} \right )$ and using that $(p^2+s)^{-1}$ is self-adjoint and the definition of $\tilde{u}_s$ above as well as Fubini's theorem, we conclude that
\begin{align} \label{eq:mora_magic}
 - \frac{1}{4} \Re \left ( \int \overline{\tilde{u}} \left [-i |p|, \nabla \tilde{\phi} \cdot p + p  \cdot \nabla \tilde{\phi} \right ] \tilde{u} \right ) & = \frac{b}{2\lambda}  \int_{s=0}^{+\infty} \sqrt{s} \int_{\R} \Delta \phi \left ( \frac{x-\alpha}{A \lambda} \right ) | \nabla \tilde{u}_s|^2 \, dx \, d s \\ 
 & \quad - \frac{1}{8} \frac{b}{A^2 \lambda^3} \int_{s=0}^{+\infty} \sqrt{s} \int_{\R} \Delta^2 \phi \left ( \frac{x-\alpha}{A \lambda} \right ) | \tilde{u}_s |^2 \, dx \, d s   . \nonumber
\end{align}
Next, we estimate the terms in \eqref{eq:mora_manip} that are cubic and higher order in $\tilde{u}$. Using the fractional Leibniz rule as well as the bounds \eqref{apriori:u}, \eqref{apriori:mod}, \eqref{apriori:w}, we find that
\begin{align} \label{ineq:loc_int}
& \left | b \Re \left ( A \nabla \phi \left ( \frac{x-\alpha}{A \lambda} \right ) ( 2 |\tilde{u}|^2 w + \tilde{u}^2 \overline{w} + |\tilde{u}|^2 \tilde{u}) \cdot \overline{\nabla \tilde{u}} \right )  \right . \\
& \quad \left .  - \frac{1}{2} \frac{b}{\lambda} \Re \left ( \int \Delta \phi \left ( \frac{x-\alpha}{A \lambda} \right ) (2 |\tilde{u}|^2 w + \tilde{u}^2 \overline{w} + |\tilde{u}|^2 \tilde{u} ) \tilde{u} \right ) \right | \nonumber \\
&\lesssim \|\tilde{u}\|_{\dot{H}^{\frac{1}{2}}}^2\|\nabla\tilde{\phi}\|_{L^\infty}\|\tilde{u}\|_{L^\infty}(\|\tilde{u}\|_{L^\infty} + \|w\|_{L^{\infty}})+ \|\tilde{u}\|_{\dot{H}^{\frac{1}{2}}} \| \nabla \tilde{\phi} \|_{L^\infty} \|\tilde{u}\|_{L^\infty}^2 \|w\|_{\dot{H}^{\frac{1}{2}}} \nonumber \\
& \quad +\|\tilde{u}\|_{\dot{H}^{\frac{1}{2}}}\|\nabla\tilde{\phi}\|_{\dot{H}^{\frac{1}{2}}}\|\tilde{u}\|_{L^{\infty}}^2(\|\tilde{u}\|_{L^\infty} + \|w\|_{L^\infty})
+\lambda^{-\frac{1}{2}}( \| \tilde{u} \|_{L^4}^3 \| w \|_{L^4}+ \|\tilde{u}\|_{L^4}^4 )\nonumber \\
& \lesssim \Or ( \loger{ \| \tilde{u} \|_{H^{1/2}} } \| \tilde{u} \|_{H^{1/2}}^2 ) +  \lambda^{-\frac 1 2} ( \lambda^{-\frac 1 4} \| \tilde{u} \|_{\dot{H}^{1/2}}^{\frac 3 2} \| \tilde{u} \|_{L^2}^{\frac 3 2}+ \| \tilde{u} \|_{\dot{H}^{1/2}}^2 \| \tilde{u} \|_{L^2}^2  ) \nonumber \\
& \lesssim \Or ( \loger{ \| \tilde{u} \|_{H^{1/2}} } \| \tilde{u} \|_{H^{1/2}}^2 ) + \lambda^{\frac 1 4} \| \tilde{u} \|_{H^{1/2}}^2 + \lambda^{\frac 3 2} \| \tilde{u} \|_{H^{1/2}}^2 \nonumber \\
& \lesssim \Or  ( \loger{ \| \tilde{u} \|_{H^{1/2}} } \| \tilde{u} \|_{H^{1/2}}^2 ) , \nonumber
&\end{align} 
where we have again exploited Lemma~\ref{lem:brezis} as well as the assumed a priori bounds on $\|\tilde{u}\|_{H^{\frac{1}{2}+}}$. Moreover, we also used the fact that, by Lemma \ref{lem:brezis} and by the bounds \eqref{apriori:w} and \eqref{apriori:mod}, we have $\| \tilde{u} \|_{\dot{H}^{1/2}} \| \nabla \tilde{\phi} \|_{L^\infty} \| w \|_{L^\infty} \lesssim \lambda^{\frac 1 2} \cdot  b  \cdot\lambda^{-\frac 1 2} \left |\log \lambda \right | \lesssim \lambda^{\frac 1 2} \left | \log \lambda \right  | \lesssim 1$. Furthermore note that $\| \nabla \tilde{\phi} \|_{\dot{H}^{1/2}} \lesssim 1$ holds, which can be easily checked by calculation.

Next, we consider the terms in \eqref{eq:mora_manip} that are quadratic in $\tilde{u}$. Integrating by parts, we obtain
\begin{align} \label{eq:loc_int1}
& -b \Re \left ( \int \psi A \nabla \phi \left ( \frac{x-\alpha}{A \lambda} \right )  \cdot \overline{\nabla \tilde{u}} \right ) - \frac{1}{2} \frac{b}{\lambda} \Re \left ( \int \psi \Delta \phi \left ( \frac{x-\alpha}{A \lambda} \right ) \overline{\tilde{u}} \right ) \\
& = \Im \left ( \int \left [ i b A \nabla \phi \left ( \frac{x-\alpha}{A \lambda} \right ) \cdot \nabla \psi + i \frac{b}{2 \lambda} \Delta \phi \left ( \frac{x-\alpha}{A \lambda} \right )  \psi \right ] \overline{\tilde{u}} \right ) .
\end{align}
Moreover, an integration by parts yields that
\begin{align} \label{eq:loc_int2}
& -b \Re \left ( \int A \nabla \phi \left ( \frac{x-\alpha}{A \lambda} \right ) (2 |w|^2 \tilde{u} + w^2 \overline{\tilde{u}} ) \cdot \overline{\nabla \tilde{u}} \right ) \\
& \quad -\frac{1}{2} \frac{b}{\lambda} \Re \left ( \int \Delta \phi \left ( \frac{x-\alpha}{A \lambda} \right ) (2 |w|^2  \tilde{u}  + w^2 \overline{\tilde{u}} ) \overline{\tilde{u}} \right ) \nonumber \\
& = b \Re \left ( \int A \nabla \phi \left ( \frac{x-\alpha}{A \lambda} \right ) ( 2 |\tilde{u}|^2 w + \tilde{u}^2 \overline{w} ) \cdot \overline{\nabla w} \right ) . \nonumber 
 \end{align}
Note that $\Delta \phi$ is not present on right-hand side of the previous equation and that the quadratic term is different from those appearing on the left-hand side. 

Finally, we insert \eqref{ineq:loc_int}, \eqref{eq:loc_int1} and \eqref{eq:loc_int2} into \eqref{eq:mora_manip}. This yields together with \eqref{eq:mora_magic} and another integration by parts the following equation
\begin{align*}
&  \frac{1}{2} \Im \left (  \int \nabla \tilde{\phi} \cdot \left ( ( \nabla \partial_t  \tilde{u}) \overline{\tilde{u}} + \nabla \tilde{u} \partial_t \overline{\tilde{u}} \right ) \right ) \\
& = \frac{b}{2\lambda}   \int_{s=0}^{+\infty} \sqrt{s} \int_{\R} \Delta \phi \left ( \frac{x}{A \lambda} \right ) | \nabla \tilde{u}_s|^2 \, dx \, d s  - \frac{1}{8} \frac{b}{A^2 \lambda^3} \int_{s=0}^{+\infty} \sqrt{s} \int_{\R} \Delta^2 \phi \left ( \frac{x-\alpha}{A \lambda} \right ) | \tilde{u}_s |^2 \, dx \, ds   \\
& \quad + b \Re \left ( \int A \nabla \phi \left ( \frac{x-\alpha}{A \lambda} \right ) ( 2 |\tilde{u}|^2 w + \tilde{u}^2 \overline{w} ) \cdot \overline{\nabla w} \right ) \\
& \quad + \Im \left ( \int \left [ i b A \nabla \phi \left ( \frac{x-\alpha}{A \lambda} \right ) \cdot \nabla \psi + i \frac{b}{2 \lambda} \Delta \phi \left ( \frac{x-\alpha}{A \lambda} \right )  \psi \right ] \overline{\tilde{u}} \right ) \\
& \quad + \Or \left (  \loger{\| \tilde{u} \|_{H^{1/2}}} \| \tilde{u} \|_{H^{1/2}}^2 \right ) ,
\end{align*}
where $\tilde{u}_s$ is defined in \eqref{def:Us}. This completes the proof of Lemma \ref{lem:morawetz}. \end{proof}

\section{Backwards Propagation of Smallness}

We now apply the energy estimate of the previous section in order to establish a bootstrap argument, which will be needed in the construction of minimal mass blowup solutions. Let $u=u(t,x)$ be an even solution to \eqref{eq:wave} defined in $[\tilde{t}_0, 0)$. Assume that $\tilde{t}_0 < t_1 < 0$ and suppose that $u$ admits on $[\tilde{t}_0, t_1]$ a geometrical decomposition of the form
\begin{equation}
u(t,x) = \frac{1}{\lambda^{\frac 1 2}(t)} \left [  Q_{\Pa(t)} + \eps \right ]  \left (t, \frac{x-\alpha(t)}{\lambda(t)} \right ) e^{i \gamma(t)} ,
\end{equation}
where $\eps = \eps_1 + i \eps_2$ satisfies the orthogonality conditions \eqref{eq:ortho1}--\eqref{eq:ortho5} and $b^2 + |v| + \| \eps \|_{H^{1/2}}^2 \ll 1$ holds. We set
\begin{equation}
\tilde{u}(t,x) = \frac{1}{\lambda^{\frac 1 2}(t)} \eps \left ( t, \frac{x-\alpha(t)}{\lambda(t)} \right ) e^{i \gamma(t)} .
\end{equation}
Suppose that the energy satisfies $E_0=E(u) > 0$ and define the constant 
\begin{equation} \label{def:C0}
C_0 = \sqrt{ \frac{ e_1 }{E_0} },
\end{equation}
with the universal constant $e_1 = \frac{1}{2} (L_- S_1, S_1) > 0$. Moreover, let $P_0 = P(u_0)$ be the linear momentum and define the constant
\begin{equation} \label{def:D0}
D_0 = \frac{P_0}{p_1} ,
\end{equation}
with the universal constant $p_1 = 2 (L_- G_1, G_1) > 0$.

We claim that the following backwards propagation estimate holds. 

\begin{lemma}[Backwards propagation of smallness] \label{lem:back}
Assume that, for some $t_1 < 0$ sufficiently close to 0 and some $\eps\in (0,\frac{1}{4})$ fixed, we have the bounds
$$
\left | \| u \|_{L^2}^2 - \| Q \|_{L^2}^2 \right | \lesssim \lambda^2(t_1),
$$
$$
\| D^{\frac 1 2} \tilde{u}(t_1) \|_{L^2}^2 + \frac{ \| \tilde{u}(t_1) \|_{L^2}^2 }{\lambda(t_1)} \lesssim \lambda(t_1) ,\,\,\| D^{\frac12 + \eps}\tilde{u}(t_1)\|_{L^2}^2 \lesssim \lambda^{\frac{1}{2}-2\eps}(t_1)
$$
$$
\left | \lambda(t_1) - \frac{t_1^2}{4C^2_0} \right | \lesssim \lambda^{\frac 3 2}(t_1), \quad \left | \frac{b(t_1)}{\lambda^{\frac 1 2}(t_1)} - \frac{1}{C_0} \right | \lesssim \lambda(t_1) , \quad
\left | \frac{v(t_1)}{\lambda(t_1)} - D_0 \right | \lesssim \lambda(t_1) .
$$
Then there exists a time $t_0 < t_1$ depending only on $C_0$ and $D_0$ such that $\forall t \in [t_0, t_1]$ it holds
$$
 \| D^{\frac 1 2} \tilde{u}(t) \|_{L^2}^2 + \frac{ \| \tilde{u}(t) \|_{L^2}^2}{\lambda(t)} \lesssim \| D^{\frac 1 2} \tilde{u}(t_1) \|_{L^2}^2 + \frac{\| \tilde{u}(t_1) \|_{L^2}^2}{\lambda(t_1)} + \lambda^3(t),
$$
$$
\| D^{\frac12 + \eps}\tilde{u}(t)\|_{L^2}^2 \lesssim \lambda^{\frac{1}{2}-2\eps}(t),
$$
$$
\left | \lambda(t) - \frac{t^2}{4C^2_0} \right | \lesssim \lambda^{\frac 3 2}(t), \quad \left | \frac{b(t)}{\lambda^{\frac 1 2}(t)} - \frac{1}{C_0} \right | \lesssim \lambda(t) , \quad
\left | \frac{v(t)}{\lambda(t)} - D_0 \right | \lesssim \lambda(t) .
$$
\end{lemma}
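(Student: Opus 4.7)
The proof is a bootstrap argument built on the monotonicity formula of Lemma~\ref{lem:morawetz} and the modulation estimates of Lemma~\ref{lem:Mod_bound}. Fix a large constant $K$ and let $t_0 \in [-\delta, t_1)$ be the infimum of times $t \leq t_1$ on which the claimed conclusions hold on $[t, t_1]$ with their implicit constants replaced by $K$; by continuity $t_0 < t_1$. The plan is to improve each bound on $[t_0, t_1]$ to a constant at most $K/2$, so that a standard continuity argument forces $t_0$ to depend only on $C_0, D_0$, provided $|t_1|$ is chosen small enough.

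\textbf{Parameter laws.} Combining Lemma~\ref{lem:Mod_bound}, the almost-critical-mass hypothesis, and the bootstrap bound $\|\eps\|_{L^2}^2 = \|\tilde u\|_{L^2}^2 \lesssim \lambda^2$, and switching from $s$ to $t$ via $ds/dt = 1/\lambda$, I obtain the ODE system
\begin{align*}
\lambda_t = -b + O(\lambda^{3/2}), \quad b_t = -\tfrac{b^2}{2\lambda} + O(\lambda), \quad v_t = -\tfrac{bv}{\lambda} + O(\lambda).
\end{align*}
Writing $b = \lambda^{1/2}/C_0 + \delta_b$ and $v = D_0 \lambda + \delta_v$ and linearizing produces Gronwall-type ODEs for $\delta_b$ and $\delta_v$; integrated backwards from $t_1$, these preserve $|\delta_b| \lesssim \lambda$, $|\delta_v| \lesssim \lambda^{3/2}$, and $|\lambda - t^2/(4 C_0^2)| \lesssim \lambda^{3/2}$ throughout $[t_0, t_1]$, improving the bootstrap constants for the modulation parameters.

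\textbf{Dispersive bound via $\In_A$.} Rescaling $\tilde u(t, x) = \lambda^{-1/2}\eps(t, (x-\alpha)/\lambda) e^{i\gamma}$ in the definition of $\In_A$ produces, up to cubic and higher corrections,
\begin{align*}
\In_A = \tfrac{1}{\lambda}\left\{ \tfrac12 (L_+ \eps_1, \eps_1) + \tfrac12 (L_- \eps_2, \eps_2) + \tfrac{b}{2}\,\Im\!\Big(\!\int A\nabla\phi(y/A) \cdot \nabla\eps\,\overline{\eps}\Big) + O(\|\eps\|_{H^{1/2}}^3)\right\}.
\end{align*}
The orthogonality conditions \eqref{eq:ortho1}--\eqref{eq:ortho5}, the almost-critical-mass relation $2\Re(\eps, Q_\Pa) + \|\eps\|_{L^2}^2 = O(\lambda^2)$, and the coercivity of the linearized operator $L$ on the orthogonal complement of its kernel (Lemma~\ref{lem:coerc}) --- combined with a choice of $A$ large enough that the localized virial correction is controlled --- yield
\begin{align*}
\In_A(t) \geq \tfrac{c_0}{4}\left(\|D^{1/2}\tilde u\|^2 + \tfrac{\|\tilde u\|^2}{\lambda}\right) - C\lambda^2.
\end{align*}
Applying Lemma~\ref{lem:morawetz} with $w$ the soliton profile and $\psi$ its equation residue --- which from Proposition~\ref{prop:Qb_existence} and the Mod estimates obeys $\lambda\|\psi\|_{L^2}^2 \lesssim \lambda^3$ --- and using the orthogonality conditions to show that the troublesome $w^2\overline{\tilde u^2}$ cross-term is of size $O(\lambda)$, I deduce $d\In_A/dt \geq -C\lambda$. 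Integrating backwards from $t_1$ produces the desired $H^{1/2}$ bound on $\tilde u$, the residual $\lambda^3(t)$ arising from $\int_t^{t_1}\lambda\,ds$.

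\textbf{Higher regularity and the main obstacle.} Because $\In_A$ controls only the $H^{1/2}$ norm of $\tilde u$, I close the bootstrap for $\|D^{1/2+\eps}\tilde u\|^2$ by a separate direct energy estimate: pair \eqref{eq:w} with $D^{1+2\eps}\overline{\tilde u}$, take imaginary parts (the linear term cancels), and estimate the cubic interactions via the fractional Leibniz and chain rules (as in the proof of Lemma~\ref{lem:morawetz}), feeding back the bootstrap hypothesis. Time-integrating yields the claimed $\lambda^{1/2-2\eps}(t)$ bound. The main obstacle throughout is the coercivity of $\In_A$: the localized virial correction is not of definite sign on its own and must be dominated by the linearized Hamiltonian using the full strength of the five orthogonality conditions --- with the critical-mass condition replacing the sign-indefinite null direction along $Q$ --- together with a large choice of $A$ that offsets the cut-off contributions without inflating the nonlocal commutator error beyond the admissible $O(\lambda)$ size.
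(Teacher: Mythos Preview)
Your overall strategy is right, but there is a genuine gap in how you treat $d\In_A/dt$. The term $-\frac{1}{\lambda}\Im\int w^2\overline{\tilde u^2}$ is not of size $O(\lambda)$: under the bootstrap it is $O(1)$, and the orthogonality conditions---being linear in $\eps$---cannot by themselves control this quadratic expression. What the paper does is gather \emph{all} the quadratic-in-$\tilde u$ terms on the right-hand side of Lemma~\ref{lem:morawetz} (the $w^2\overline{\tilde u^2}$ term, the $w_t$ term, the localized virial integrals $\frac{b}{\lambda}\int\sqrt{s}\int\Delta\phi\,|\nabla\tilde u_s|^2$ and its $\Delta^2\phi$ companion, and the term with $\nabla w$) into a single quadratic form which, after rescaling to the $\eps$ variable and computing $w_t$ explicitly, becomes $\frac{b}{2\lambda^2}\big[(L_{+,A}\eps_1,\eps_1)+(L_{-,A}\eps_2,\eps_2)\big]$ up to $O(A^{-1}\|\eps\|_{L^2}^2)$ errors. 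The crucial missing ingredient is the localized coercivity estimate of Proposition~\ref{prop:LA} for $L_{+,A}$ and $L_{-,A}$, which together with the mass relation controlling $(\eps_1,Q)^2$ makes this form non-negative. One then obtains $d\In_A/dt \geq 0 + O\big(\log^{1/2}(\cdot)\,\|\tilde u\|_{H^{1/2}}^2 + K^4\lambda^{5/2}\big)$, and it is the $\lambda^{5/2}$ error that integrates to the stated $\lambda^3$ residual. Your bound $d\In_A/dt \geq -C\lambda$, even if it held, would only integrate to $\lambda^{3/2}$, which is too large.

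A second omission concerns the $\psi$-terms: the bound $\lambda\|\psi\|_{L^2}^2 \lesssim \lambda^3$ handles only one error term in Lemma~\ref{lem:morawetz}. The linear-in-$\tilde u$ contribution $\Im\int\big[-D\psi - \psi/\lambda + \cdots\big]\overline{\tilde u}$ is of rough size $O(K^3\lambda)$, which again integrates to $\lambda^{3/2}$. The paper splits $\psi=\psi_1+\psi_2$ where $\psi_1$ carries only the leading-order profiles $S_1,G_1,\Lambda Q,\nabla Q,Q$; since these lie in the generalized null space of $L$, applying the operator $D+\lambda^{-1}-|w|^2$ produces quantities on which the orthogonality conditions \eqref{eq:ortho1}--\eqref{eq:ortho5} (together with the improved bound on $\lambda_s/\lambda+b$ in Lemma~\ref{lem:Mod_bound}) gain an extra factor of $\Pa \sim \lambda^{1/2}$, bringing the contribution down to $o(\|\eps\|_{L^2}^2/\lambda^{3/2}) + K^4\lambda^{5/2}$. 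Finally, your ODE-integration approach to the parameter bounds is plausible but not what the paper does: it invokes energy and momentum conservation directly (via Lemma~\ref{lem:Qb_properties}) to pin down $b^2e_1 = \lambda E_0 + O(\lambda^2)$ and $vp_1 = \lambda P_0 + O(\lambda^2)$, which is shorter and avoids a second Gronwall layer.
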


\begin{proof}
By assumption, we have $u \in C^0([t_0, t_1]; H^{1/2+\eps}(\R))$. Hence, by this continuity and the continuity of the functions $\{ \lambda(t), b(t), \alpha(t), v(t)  \}$, there exists a time $t_0 < t_1$ such that $\forall t \in [t_0,t_1]$ we have the bounds
\begin{equation} \label{ineq:cont_bound1}
\| \tilde{u} \|_{L^2} \leq K \lambda(t), \quad \| \tilde{u}(t) \|_{H^{1/2}} \leq K \lambda^{\frac 1 2}(t),
\end{equation}
\begin{equation}\label{ineq:cont_bound1.5}
 \|\tilde{u}(t)\|_{H^{\frac{1}{2}+\eps}}\leq K\lambda^{\frac{1}{4}-\eps}(t),
\end{equation}
\begin{equation} \label{ineq:cont_bound2}
\left | \lambda(t) - \frac{t^2}{4C^2_0} \right | \leq K \lambda^{\frac 3 2}(t), \quad \left | \frac{b(t)}{\lambda^{\frac 1 2}(t)} - \frac{1}{C_0} \right | \leq K \lambda(t),
\end{equation}
\begin{equation} \label{ineq:cont_bound3}
\left | \frac{v(t)}{\lambda(t)} - D_0 \right | \leq K \lambda(t) ,
\end{equation}
with some constant $K > 0$. We now claim that the bounds stated in Lemma \ref{lem:back} hold on $[t_0,t_1]$ and hence improving \eqref{ineq:cont_bound1} -- \eqref{ineq:cont_bound3} on $[t_0,t_1]$ for $t_0=t_0(C_0) < t_1$ small enough but independent of $t_1$. Here we first improve the bounds \eqref{ineq:cont_bound1}, \eqref{ineq:cont_bound2} and \eqref{ineq:cont_bound3}, and we defer the improvement of the technical bound \eqref{ineq:cont_bound1.5} to the appendix. 
 We divide the proof into the following steps.

\medskip
{\bf Step 1: Bounds on energy and $L^2$-norm.}
We set
\begin{equation} \label{def:wcoerc}
w(t,x) = \tilde{Q}(t,x) = \frac{1}{\lambda^{\frac 1 2}(t)} Q_{\Pa(t)} \left ( \frac{x-\alpha(t)}{\lambda(t)} \right ) e^{i \gamma(t)} .
\end{equation} 
Let $\In_A$ be given by \eqref{def:Ifunc}. Applying Lemma \ref{lem:morawetz}, we claim that we obtain the following coercivity estimate
\begin{equation} \label{ineq:coerc1}
\frac{d \In_A}{dt} \geq \frac{b}{\lambda^2} \int | \tilde{u} |^2 + \Or \left ( \loger{\| \tilde{u} \|_{H^{1/2}}} \| \tilde{u} \|_{H^{1/2}}^2 + K^4 \lambda^{\frac 5 2} \right ) . 
\end{equation}
For the moment, let us assume that we have already proven that \eqref{ineq:coerc1} holds. By Sobolev embedding and the smallness of $\eps$, we deduce the upper bound 
\begin{equation} \label{ineq:coerc2}
\left | \In_A \right | \lesssim \| D^{\frac 1 2} \tilde{u} \|_{L^2}^2 + \frac{1}{\l} \| \tilde{u} \|_{L^2}^2 .
\end{equation}
Note here that,  by Lemma \ref{lem:leibniz}, we have the bound
\begin{equation}
\left |  \Im \left ( \int A \nabla \phi \left ( \frac{x-\alpha}{A \lambda} \right ) \cdot \nabla \tilde{u} \overline{ \tilde{u} } \right ) \right | \lesssim \| D^{\frac 1 2} \tilde{u} \|_{L^2}^2 + \frac{1}{\lambda} \| \tilde{u} \|_{L^2}^2, 
\end{equation}
Furthermore, due to the proximity of $Q_\Pa$ to $Q$, we conclude the lower bound
\begin{align} \label{ineq:coerc3}
\In_A & = \frac{1}{2} \int |D^{\frac 1 2} \tilde{u}|^2 + \frac{1}{2} \int \frac{ |\tilde{u}|^2}{\lambda} - \int (F(w+\tilde{u}) - F(w) - F'(w) \cdot \tilde{u} )  \\
& \quad + \frac{b}{2}  \Im \left ( \int A \nabla \phi \left ( \frac{x-\alpha}{A \lambda} \right ) \cdot \nabla \tilde{u} \overline{\tilde{u}} \right ) \nonumber \\
& = \frac{1}{2 \lambda} \left [ (L_+ \eps_1, \eps_1) + (L_- \eps_2, \eps_2) + o( \| \eps \|_{H^{1/2}}^2 ) \right ] \geq \frac{c_0}{\lambda} \left [ \| \eps \|_{H^{1/2}}^2 - (\eps_1, Q)^2 \right ] ,\nonumber
\end{align} 
using the orthogonality conditions satisfied by $\eps$ and the coercivity estimate for the linearized operator $L=(L_+,L_-)$. On the other hand, using the conservation of the $L^2$-mass and applying Lemma \ref{lem:energy_control} (and in particular \eqref{eq:control_mass}) we combine the assumed bounds to conclude that
\begin{equation}  \label{ineq:coerc3.5}
\left | \Re (\eps, Q_b) \right | \lesssim \| \eps \|_{L^2}^2 + \lambda^2(t) + \left | \int |u|^2 - \int |Q|^2 \right | \lesssim \| \eps \|_{L^2}^2 + K^2  \lambda^2(t).
\end{equation}
This implies 
\begin{equation} \label{ineq:coerc4}
(\eps_1, Q)^2 \lesssim o (\| \eps \|_{L^2}^2) + K^4 \lambda^4(t).
\end{equation}
Next, we define
\begin{equation}
X(t) := \| D^{\frac 1 2} \tilde{u}(t) \|_{L^2}^2 + \frac{\| \tilde{u}(t) \|_{L^2}^2}{\lambda(t)} .
\end{equation}
By integrating \eqref{ineq:coerc1} in time and using \eqref{ineq:coerc2}, \eqref{ineq:coerc3} and \eqref{ineq:coerc4}, we find
\begin{align*}
 X(t)  & \lesssim X(t_1) + K^4 \lambda^3(t) + \int_t^{t_1} \left ( \loger{ \| \tilde{u} \|_{H^{1/2}}} \| \tilde{u}(\tau) \|_{H^{1/2}}^2 + K^4 \lambda^{5/2}(\tau) \right ) \, d\tau \\
& \lesssim  X(t_1) + K^4 \lambda^3(t) + \int_t^{t_1} \log^{\frac 1 2}\left ( 2 + X(\tau)^{-\frac 1 2} \right ) X(\tau) \, d\tau ,
\end{align*} 
for $t \in [t_0,t_1]$ with some $t_0 = t_0(C_0) < t_1$ close enough to $t_1  <0$. By Gronwall's inequality, we deduce the desired bound for $X(t)$. In particular, we obtain
\begin{equation} \label{ineq:X_bound}
X(t) = \| D^{\frac{1}{2}} \tilde{u}(t) \|_{L^2}^2 + \frac{ \| \tilde{u}(t) \|_{L^2}^2}{\lambda(t)} \lesssim \lambda(t) , \quad \forall t \in [t_0,t_1],
\end{equation}
which closes the bootstrap for \eqref{ineq:cont_bound1}. 

\medskip
{\bf  Step 2: Controlling the law for the parameters.}
From Lemma \ref{lem:Mod_bound} and using \eqref{ineq:cont_bound2} and \eqref{ineq:X_bound}, we deduce
\begin{equation} \label{ineq:ode_bound1}
\left | b_s + \frac 1 2 b^2 \right | + \left | \frac{\lambda_s}{\lambda} + b \right | \lesssim \lambda^2.
\end{equation}
As a  direct consequence of this bound, we observe that
\begin{equation}
\left ( \frac{b}{\lambda^{\frac 1 2}} \right )_s = \frac{b_s + \frac{1}{2} b^2}{\lambda^{\frac 1 2}} - \frac{b}{2 \lambda^{\frac 1 2}} \left ( \frac{\lambda_s}{\lambda} + b \right ) \lesssim \lambda^{\frac 3 2} .
\end{equation}
Hence, for any $s < s_1$, we have
\begin{equation} \label{ineq:ode_bound2}
\frac{1}{C_0} - \frac{b}{\lambda^{\frac 1 2}}(s) \lesssim \frac{1}{C_0} - \frac{b}{\lambda^{\frac 1 2}}(s_1) + \int_{s}^{s_1} \lambda^{\frac 3 2}(s') \, ds' \lesssim \lambda(s).
\end{equation}
Note that we used here that $\lambda(t) \sim t^2$ by \eqref{ineq:cont_bound2} and the relation $dt = \lambda^{-1} ds$, as well as the assumed initial bound for $\left | b/\lambda^{\frac 1 2}(t) - 1/C_0 \right |$ at time $t=t_1$. Next, by following the calculations in the proof of Lemma \ref{lem:energy_control} and recalling that $b^2 + |v| \sim \lambda$ thanks to \eqref{ineq:cont_bound2} and \eqref{ineq:cont_bound3} and $\| \eps \|_{H^{1/2}}^2 \lesssim \lambda^2$ by \eqref{ineq:X_bound} and scaling, we deduce
\begin{equation}
b^2 e_1 = \lambda E_0 + \left ( \int |u|^2 - \int Q^2 \right ) + \Or( \lambda^2 ),
\end{equation}
where $e_1 = \frac{1}{2} (L_- S_1, S_1) > 0$ is a universal constant. Since $\int |u|^2 - \int Q^2 = \Or(\lambda^2)$ and recalling the definition of $C_0> 0$ above, we deduce that
\begin{equation}
\frac{b^2}{\lambda} - \frac{1}{C_0^2} = \left ( \frac{b}{\lambda^{\frac 1 2}} - \frac{1}{C_0} \right ) \left ( \frac{b}{\lambda^{\frac 1 2}} + \frac{1}{C_0} \right ) = \Or(\lambda) .
\end{equation}
Furthermore, from \eqref{ineq:ode_bound2} we see that $\frac{b}{\lambda^{1/2}}  \gtrsim 1$. Hence, we obtain the desired bound
\begin{equation} \label{ineq:ode_bound3}
\left | \frac{b}{\lambda^{\frac 1 2}} - \frac{1}{C_0} \right | \lesssim \lambda.
\end{equation}
We conclude using \fref{ineq:cont_bound2}, \fref{ineq:ode_bound1}:
$$-\lambda_t=b+O(\l^2)=\frac{\l^{\frac12}}{C_0}+O(\l^{\frac 32}+t^4)=\frac{\l^{\frac12}}{C_0}+O(t^3).$$ Dividing by $\l^{\frac 12}\sim |t|$, integrating in $[t,t_1]$  and using the boundary value at $t_1$ ensures: 
$$\left|\l^{\frac12}(t)-\frac{t}{2C_0}\right|\lesssim \left|\l^{\frac12}(t_1)-\frac{t_1}{2C_0}\right|+O(t^3)\lesssim t^2$$ and the desired bound for $\l$ follows.

Next, we improve the bound \eqref{ineq:cont_bound3}. In fact, by following the calculations in the proof of Lemma \ref{lem:energy_control} for the linear momentum $P(u_0)$ and recalling that $b^2 + |v| \sim \lambda$ thanks to \eqref{ineq:cont_bound2} and \eqref{ineq:cont_bound3}, we deduce
\begin{equation}
 v p_1 = \lambda P_0 + \Or (\lambda^2) ,
\end{equation}
with the universal constant $p_1 = 2 (L_- G_1, G_1) > 0$. Here we  also used that $\| \eps \|_{H^{1/2}}^2 \lesssim \lambda^2$ by \eqref{ineq:X_bound} and by scaling. Recalling the definition of $D_0 = P_0/p_1$, we thus obtain
\begin{equation}
\left | \frac{v(t)}{\lambda(t)} - D_0 \right | \lesssim \lambda(t) .
\end{equation}
This completes the proof of Step 2, assuming that the coercivity estimate \eqref{ineq:coerc1} holds true.

\medskip
{\bf Step 3: Proof of the coercivity estimate \eqref{ineq:coerc1}.}
Recall that $w = \tilde{Q}$ is given in \eqref{def:wcoerc}. Let $\mathcal{K}_A(\tilde{u})$ denote the terms quadratic in $\tilde{u}$ on the right-hand side in the equation in Lemma \ref{lem:morawetz}, i.\,e., we put
\begin{align*}
\mathcal{K}_A(\tilde{u}) &:= - \frac{1}{\lambda} \Im \left ( \int w^2 \overline{\tilde{u}^2} \right ) - \Re \left ( \int w_t \overline{ ( 2 | \tilde{u}^2 | w + \tilde{u}^2 \overline{w})} \right ) \\
& \quad + \frac{b}{\lambda}   \int \frac{|\tilde{u}|^2}{\lambda} + \frac{b}{\lambda}  \int_{s=0}^{+\infty} \sqrt{s} \int_{\R} \Delta \phi \left ( \frac{x-\alpha}{A \lambda} \right ) | \nabla \tilde{u}_s|^2 \, dx \, d s   \\
& \quad - \frac{1}{4} \frac{b}{A^2 \lambda^3}   \int_{s =0}^{+\infty} \sqrt{s} \int_{\R} \Delta^2 \phi \left ( \frac{x-\alpha}{A \lambda} \right ) | \tilde{u}_s |^2 \, dx \, d s  \\
& \quad + b \Re \left ( \int A \nabla \phi \left ( \frac{x-\alpha}{A \lambda} \right ) ( 2 |\tilde{u}|^2 w + \tilde{u}^2 \overline{w} ) \cdot \overline{\nabla w} \right ) .
\end{align*}
Recall that the function $\tilde{u}_s=\tilde{u}_s(t,x)$ with the parameter $s >0$ was defined in Lemma \ref{lem:morawetz} to be $\tilde{u}_s = \sqrt{ \frac{2}{\pi}} \frac{1}{-\Delta +s} \tilde{u}$. Recalling that $\tilde{u}(t,x) = \lambda^{-1/2} \eps(t, \lambda^{-1} x)$, we now claim that the following estimate holds:
\begin{equation} \label{ineq:KA1}
\mathcal{K}_A(\tilde{u}) \geq \frac{c}{\lambda^{3/2}} \int |\eps|^2 + \Or (K^4 \lambda^{5/2})
\end{equation}
with some universal constant $c> 0$. 

Indeed, from Lemma \ref{lem:Mod_bound} and estimate \eqref{ineq:cont_bound1} we obtain that
\begin{equation} \label{ineq:modbound}
\left | \Mod(t) \right | \lesssim K^2 \lambda^2(t) .
\end{equation}
Using this estimate, we find that $w = \tilde{Q}$ satisfies
\begin{align*}
\partial_t \tilde{Q} & = e^{i \gamma(t)} \frac{1}{\lambda^{1/2}} \left [ - \frac{\lambda_t}{\lambda} \Lambda Q_\Pa+ i \gamma_t Q_b + b_t \frac{\partial Q_\Pa}{\partial b} + v_t \frac{\pa Q_\Pa}{\pa v} - \frac{\alpha_t}{\lambda} \cdot \nabla Q_\Pa \right ] \left ( \frac{x-\alpha}{\lambda} \right )  \\ 
& = \left ( \frac{i}{\lambda} + \frac{b}{2\lambda} \right ) \tilde{Q} + b \left ( \frac{x-\alpha}{\lambda} \right ) \cdot \nabla \tilde{Q} + \Or ( K \lambda^{-1/2} ), 
\end{align*}
recalling also that $\tilde{\gamma}_s = \gamma_s -1$ and $\frac{ds}{dt} = \lambda^{-1}$. Note that we also used the uniform bounds  $\|  \partial_b Q_\Pa \|_{L^\infty} \lesssim 1, \| \pa _v Q_\Pa \|_{L^\infty} \lesssim 1$ and the facts that $|b_t| \lesssim K, |v_t| \lesssim K$ , which can be seen from \eqref{ineq:modbound}, \eqref{ineq:cont_bound2} and \eqref{ineq:cont_bound3}. Hence,
\begin{align*}
- \Re \left ( \int \partial_t \tilde{Q} \overline{ ( 2 |\tilde{u}|^2 \tilde{Q} + \tilde{u}^2 \overline{\tilde{Q}} )} \right ) & =  \frac{1}{\lambda} \Im \left ( \int \tilde{Q} \overline{ (2 |\tilde{u}|^2 \tilde{Q} + \tilde{u}^2 \overline{\tilde{Q}} ) } \right ) - \frac{b}{2 \lambda} \Re \left ( \int (2 |\tilde{u}^2 \tilde{Q} + \tilde{u}^2 \overline{\tilde{Q}}) \overline{\tilde{Q}} \right ) \\
& - b \Re \left ( \int \left ( \frac{x-\alpha}{\lambda} \right ) ( 2 |\tilde{u}|^2 \tilde{Q} + \tilde{u}^2 \overline{\tilde{Q}} ) \cdot \overline{\nabla \tilde{Q} } \right )  \\
& + \Or( K \lambda^{-1} \| \eps \|_{L^2}^2 ).
\end{align*}
Note here that, in order to deduce the bound on the error term, we used that
$$
\left | \int \Or ( K \lambda^{-1/2} ) |\tilde{u}|^2 \overline{\tilde{Q}} \right |  \lesssim \frac{K}{\lambda} \| \eps \|_{L^2}^2 = \Or (K \lambda^{-1} \| \eps \|_{L^2}^2 ) ,
$$
thanks to the bound $\| \tilde{Q} \|_{L^\infty} \lesssim \lambda^{-1/2}$ and the scaling relation $\tilde{u}(t,x) = \lambda^{-1/2} \eps(t,\lambda^{-1} (x-\alpha) )$. Going back to the definition of $\mathcal{K}_A(\tilde{u})$ and expressing everything in terms of $\eps(t,x) = \lambda^{1/2} \tilde{u} (t, \lambda x+\alpha)$, we  conclude that
\begin{align*}
\mathcal{K}_A(\tilde{u}) & = \frac{b}{2\lambda^{2}} \left \{ \int_{s=0}^{+\infty} \sqrt{s} \int \Delta \phi \left ( \frac{x}{A} \right )  |\nabla \eps_s|^2 \, dx \, ds  + \int | \eps|^2 \right .  \\
& \quad - \int ((|Q_\Pa|^2 + 2 \Sigma^2) \eps_1^2 + 4 \Sigma \Theta \eps_1 \eps_2 + (|Q_\Pa|^2 + 2\Theta^2) \eps_2^2) \\
& \quad  - \frac{1}{4 A^2} \int_{s=0}^{+\infty} \sqrt{s} \int \Delta^2 \phi \left ( \frac{x}{A} \right) |\eps_s|^2 \, dx \, ds \\
& \quad \left . +  \; 2 \Re \left ( \int \left ( A \nabla \phi \left( \frac{x}{A} \right ) - x \right ) (2 |\eps|^2 Q_\Pa + \eps^2 \overline{Q_\Pa} ) \cdot \overline{\nabla Q_\Pa} \right ) \right \} \\
& \quad + \Or (K \lambda^{-1} \| \eps \|_{L^2}^2 ) .
\end{align*}
Next we note that $A \nabla \phi(x/A) - x \equiv 0$ for $|x| \leq A$ and we estimate
\begin{align*}
& \left | \int \left ( A \nabla \phi \left( \frac{x}{A} \right ) - x \right ) (2 |\eps|^2 Q_\Pa + \eps^2 \overline{Q_\Pa} ) \cdot  \overline{\nabla Q_\Pa} \right | \\ & \lesssim \| (A+|x|) Q_\Pa \|_{L^\infty ( \{ |x| \geq AÊ\})} \| \nabla Q_\Pa \|_{L^\infty} \| \eps \|_{L^2}^2  \lesssim \left \| \frac{A+|x|}{1+|x|^2} \right \|_{L^\infty( \{ |x| \geq A \} )} \| \eps \|_{L^2}^2  \lesssim \frac{1}{A} \| \eps \|_{L^2}^2 ,
\end{align*}
where we used the uniform decay estimate $|Q_\Pa(x)| \lesssim \langle x \rangle^{-2}$ and the bound $\| \nabla Q_\Pa \|_{L^\infty} \lesssim \| Q_\Pa \|_{H^2} \lesssim 1$. Furthermore, thanks to Lemma \ref{lem:phi4}, we have 
\begin{equation}
\left | \frac{1}{A^2} \int_{s=0}^{+\infty} \sqrt{s} \int \Delta^2 \phi \left ( \frac{x}{A} \right) |\eps_s|^2 \, dx \, ds \right | \lesssim \frac{1}{A} \| \eps \|_{L^2}^2.
\end{equation} 
Recalling the definitions of $L_{+,A}$ and $L_{-,A}$ in \eqref{def:LplusA} and \eqref{def:LminusA}, we deduce that
\begin{align*}
\mathcal{K}_A(\tilde{u}) & = \frac{b}{2 \lambda^2} \left \{ ( L_{+,A} \eps_1, \eps_1) + (L_{-,A} \eps_2, \eps_2)  +\Or\left(\frac{1}{A} \int |\eps|^2\right)  \right \} \\
& \quad +  \frac{1}{\lambda^{3/2}} \Or(K \lambda^{1/2} \| \eps \|_{L^2}^2) 
\end{align*}
Next, we recall that $b \sim \lambda^{\frac 1 2}$ due to \eqref{ineq:cont_bound2}. Hence, by Proposition \ref{prop:LA} and choosing $A > 0$ sufficiently large, we deduce from previous estimates that
\begin{equation}
\mathcal{K}_A(\tilde{u}) \gtrsim \frac{1}{\lambda^{3/2}} \left \{ \int |\eps|^2 - (\eps_1, Q)^2 \right \} \gtrsim \frac{1}{\lambda^{3/2}} \int |\eps|^2 + \Or(K \lambda^{5/2}),
\end{equation}
where the last step follows from \eqref{ineq:coerc4}. This completes the proof of \eqref{ineq:KA1} and  Step 3.

\medskip
{\bf Step 4: Controlling the remainder terms in $\frac{d}{dt} \In_A$.}
We now control the terms that appear in Lemma \ref{lem:morawetz} and contain $\psi$. Here we recall that $w = \tilde{Q}$ and \eqref{eq:w}, which yields
\begin{align*}
\psi = \frac{1}{\lambda^{\frac  3 2}} & \left [ i \left ( b_s + \frac 1 2 b^2 \right ) \partial_b Q_\Pa - i \left ( \frac{\lambda_s}{\lambda} +  b \right ) \Lambda Q_\Pa + i \left  ( v_s+ bv \right ) \partial_v Q_\Pa  \right . \\
&   \left .  - i \left ( \frac{\alpha_s}{\lambda} - v \right ) \cdot \nabla Q_\Pa + \tilde{\gamma}_s Q_\Pa + \Psi_\Pa \right ] \left ( \frac{x-\alpha}{\lambda} \right )e^{i \gamma} . 
\end{align*}
Here $\Psi_\Pa$ is the error term given in Proposition \ref{prop:Qb_existence}. In fact, by the estimates for $Q_\Pa$ and $\Psi_\Pa$ from Proposition \ref{prop:Qb_existence} and recalling \eqref{ineq:modbound}, we deduce the rough pointwise bounds
\begin{equation} \label{ineq:rough}
\left | \nabla^k \psi(x) \right | \lesssim \frac{1}{\lambda^{\frac 3 2 + k}} \left \langle \frac{x-\alpha}{\lambda} \right \rangle^{-2} K^2 \lambda^2 , \quad \mbox{for $k=0,1$}.
\end{equation}
 Hence,
\begin{equation}
\| \nabla^k \psi \|_{L^2} \lesssim K^2 \lambda^{1-k}, \quad \mbox{for $k=0,1$}.
\end{equation}
In particular, we obtain the following bounds
\begin{equation}
\lambda \| \psi \|_{L^2}^2 \lesssim K^4 \lambda^3, 
\end{equation}
\begin{align}
& \left | \Im \left ( \int  \left [ i b A \nabla \phi \left ( \frac{x-\alpha}{A \lambda} \right ) \cdot \nabla \psi + i \frac{b}{2 \lambda} \Delta \phi \left ( \frac{x-\alpha}{A \lambda} \right )  \psi \right ] \overline{\tilde{u}} \right ) \right | \\
& \lesssim \lambda^{\frac 1 2} \| \nabla \psi \|_{L^2} \| \tilde{u} \|_{L^2} + \lambda^{-\frac 1 2} \| \psi \|_{L^2} \| \tilde{u} \|_{L^2} \nonumber \\
& \lesssim K^2 \lambda^{\frac 1 2} \| \eps \|_{L^2} \lesssim o \left ( \frac{\| \eps \|_{L^2}^2}{\lambda^{\frac 3 2}} \right ) + K^4 \lambda^{\frac 5 2} . \nonumber
\end{align}
Similar as in [RSz], the rough bound \eqref{ineq:rough} is not sufficient to control the remaining terms with $\psi$ in Lemma \ref{lem:morawetz}. In fact, we have to exploit a further cancellation as follows. Write $\psi = \psi_1 + \psi_2$ with $\psi_2= \Or(\Pa|\Mod|+b^5)=\Or(\l^{\frac 52})$, i.\,e., we denote
\begin{align*}
\psi_1  = \frac{1}{\lambda^{\frac 3 2}} & \left [ - \left ( b_s + \frac 1 2 b^2 \right ) S_1 - i \left ( \frac{\lambda_s}{\lambda} +  b \right ) \Lambda Q  - \left ( v_s + bv \right ) G_1 \right . \\
& \quad \left . - i \left ( \frac{\alpha_s}{\lambda} - v \right ) \cdot \nabla Q +  \tilde{\gamma}_s Q \right ] \left ( \frac{x-\alpha}{\lambda} \right )e^{i \gamma}  .
\end{align*}
Let us first deal with the estimating the contributions coming from $\psi_2$. Indeed, since $|b|^2 + |v| \sim \lambda$ we note that $\psi_2 = \Or(\l^{\frac 52})$ satisfies the pointwise bound
\begin{equation}
\left | \nabla^k \psi_2(x) \right | \lesssim \frac{1}{\lambda^{\frac 3 2 -k}}  \left \langle \frac{x-\alpha}{\lambda} \right \rangle^{-2} K^2 \lambda^{\frac 5 2} , \quad \mbox{for $k=0,1$}.
\end{equation} 
Hence,
\begin{equation}  
\| \nabla^k \psi_2 \|_{L^2} \lesssim K^2 \lambda^{\frac 3 2 - k} , \quad \mbox{for $k=0,1$}.
\end{equation}
Therefore, we obtain similarly as above
\begin{align*}
& \left | \Im \left ( \int \left [ -D \psi_2 - \frac{\psi_2}{\lambda} + (2 |w|^2 \psi_2 -w^2 \overline{\psi_2} ) \right ] \overline{\tilde{u}} \right )  \right | \\
& \lesssim \left ( \| \nabla \psi_2 \|_{L^2} + \lambda^{-1} \| \psi_2 \|_{L^2} + \| \psi_2 \|_{L^\infty} \| w \|_{L^4}^2 \right ) \| \eps \|_{L^2} \nonumber \\
& \lesssim K^2 \lambda^{\frac 1 2}  \|\eps \|_{L^2} \lesssim o \left ( \frac{\| \eps \|_{L^2}^2}{\lambda^{\frac 3 2}} \right ) + K^4 \lambda^{\frac 5 2} ,
\end{align*}
which is acceptable. We finally use the fact that $\psi_1$ belongs to the generalized null space of $L=(L_+, L_-)$ and hence an extra factor of $\Or(\Pa)$ is gained using the orthogonality conditions obeyed by $\eps=\eps_1 + i \eps_2$. Indeed, we find the following bound
\begin{align*}
& \left | \Im  \left ( \int \left [ -D \psi_1 - \frac{\psi_1}{\lambda} + (2 |w|^2 \psi_1 -w^2 \overline{\psi}_1 ) \right ] \overline{\tilde{u}} \right )  \right | \\
& \lesssim \frac{ \left | \Mod(t) \right |}{\lambda^2} \left [ \left |(\eps_2, L_- S_1)\right | + \left | (\eps_2, L_- G_1 ) \right | + \left | (\eps_2, L_-Q) \right | + \Or ( \Pa \| \eps \|_{L^2}) \right ]  \\
& \quad  + \frac{1}{\lambda^2} \left | \frac{\lambda_s}{\lambda} + b \right |  \left | (\eps_1, L_+ \Lambda Q) \right | + \frac{1}{\lambda^2} \left | \frac{\alpha_s}{\lambda}-v \right | \left | (\eps_1, L_+ \nabla Q) \right |  \\
& \lesssim K^2 \lambda^{\frac 1 2} \| \eps \|_{L^2} + \frac{K^2 \lambda \| \eps \|_{L^2} + \lambda^{\frac 5 2}}{\lambda^2} \left (K \lambda^{\frac 1 2} \| \eps \|_{L^2} + K^2 \lambda^2 \right ) \\
& \lesssim o \left ( \frac{\| \eps \|_{L^2}^2}{\lambda^{\frac 3 2}} \right ) + K^4 \lambda^{\frac 5 2} ,
\end{align*}
which is an acceptable bound.  Here we used \eqref{ineq:modbound} once again and $|\Pa| \lesssim \lambda^{\frac 1 2}$, as well as $(\eps_2, L_- S_1) = (\eps_2, \Lambda Q) = \Or (\Pa \|\eps \|_{L^2})$ and $(\eps_2, L_- G_1) = - (\eps_2, \nabla Q) = \Or(\Pa \| \eps \|_{L^2})$, thanks to the orthogonality conditions for $\eps$. Moreover, we used that $L_+ \nabla Q =0$ and $L_+ \Lambda Q = -Q$ together with improved bound in Lemma \ref{lem:Mod_bound}, combined with the fact that $|(\eps_1, Q )| \lesssim \lambda^{\frac 1 2} \| \eps \|_{L^2} + K^2 \lambda^2$, which follows from $\| \eps \|_{L^2} \lesssim \lambda$ and the conversation of $L^2$-mass leading to bound \eqref{ineq:coerc4} above. 

Finally, we recall  \eqref{ineq:KA1} and we  insert all the derived estimates for the terms involving $\psi$ in Lemma \ref{lem:morawetz} and we conclude that the coercivity property \eqref{ineq:coerc1} holds.

\medskip
{\bf Step 5: Bounds on $\| D^{\frac 1 2 + \eps} \tilde{u}(t)\|_{L^2}$.} This step is detailed in Appendix \ref{sec:higherHs}. 

\medskip
\noindent
The proof of Lemma \ref{lem:back} is now complete. \end{proof}

\section{Existence of Minimal Mass Blowup Solutions}
\label{sec:existence_min}

With the results of the previous sections as hand, we are now ready to prove the following main result, which in particular yields Theorem \ref{thmmain}.

\begin{thm} \label{thm:exist}
Let $\gamma_0,x_0, P_0 \in \R$ and $E_0 > 0$ be given. Then there exist a time $t_0 < 0$ and a solution $u_c \in C^0([t_0,0); H^{\frac 1 2+\eps}(\R))$ of \eqref{eq:wave} with some $0 < \eps < \frac 1 4$ such that $u_c$ blows up at time $T=0$ with
$$
E(u_c) = E_0, \quad P(u_0) = P_0, \quad \mbox{and} \quad \| u_c \|_{L^2} = \| Q \|_{L^2}.
$$
Furthermore, we have $\| D^{\frac 1 2} u_c(t) \|_{L^2} \sim |t|^{-1}$ as $t \to 0^-$, and $u_c$ is of the form
$$
u_c(t,x) = \frac{1}{\lambda_c^{\frac 1 2}(t)} \left [ Q_{\Pa_c(t)} + \eps_c \right ] \left ( t, \frac{x-\alpha_c(t)}{\lambda_c(t)} \right ) e^{i \gamma_c(t)} = \tilde{Q}_c + \tilde{u}_c,
$$
with $\Pa_c(t) = (b_c(t), v_c(t))$, and $\eps_c$ satisfies the orthogonality conditions \eqref{eq:ortho1}--\eqref{eq:ortho3}. Finally, the following estimates hold:
$$
\| \tilde{u}_c \|_{L^2} \lesssim \lambda_c, \quad \| \tilde{u}_c \|_{H^{1/2}} \lesssim \lambda^{\frac 1 2}_c, 
$$
$$
\lambda_c(t) - \frac{t^2}{4 C_0^2} = \Or (\lambda^{\frac 3 2}_c), \quad \frac{b_c}{\lambda_c^{\frac 1 2}}(t) - \frac{1}{C_0} = \Or(\lambda_c),  \quad \frac{v_c}{\lambda_c}(t) - D_0 = \Or(\lambda_c) ,
$$
$$
 \gamma_c(t) = -\frac{4 C_0^2}{t} + \gamma_0 + \Or(\lambda^{\frac 1 2}_c) , \quad \alpha_c(t) = x_0 + \Or(\lambda_c^{\frac 3 2}) .
$$
Here $C_0 > 0$ and $D_0 \in \R$ are the constants defined in \eqref{def:C0} and \eqref{def:D0}, respectively.
\end{thm}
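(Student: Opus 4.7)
The plan is to implement the backward-construction/compactness scheme of Rapha\"el--Szeftel \cite{RaSz2011}, adapted to the nonlocal setting of \eqref{eq:wave} via Lemmas~\ref{lem:back} and \ref{lem:Mod_bound} and the nondegeneracy of $L$ from \cite{FrLe2012}.

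First, I would pick a sequence $t_n\uparrow 0$ and at each $t_n$ prescribe initial data as a small perturbation of the modulated approximate profile,
\begin{equation*}
u_n(t_n, x) = \frac{1}{\lambda_n^{1/2}} \left[ Q_{\Pa_n} + \eta_n \right]\!\left( \frac{x - \alpha_n}{\lambda_n} \right) e^{i\gamma_n},
\end{equation*}
with the parameters chosen to match the target asymptotics at $t=t_n$:
\begin{equation*}
\lambda_n = \frac{t_n^2}{4C_0^2}, \quad b_n = \frac{\lambda_n^{1/2}}{C_0}, \quad v_n = D_0 \lambda_n, \quad \alpha_n = x_0, \quad \gamma_n = -\frac{4C_0^2}{t_n}+\gamma_0.
\end{equation*}
The correction $\eta_n$, of size $\|\eta_n\|_{H^{1/2+\eps}} = \Or(\lambda_n^2)$, would be produced by the implicit function theorem so as to force exactly $\|u_n(t_n)\|_{L^2}^2 = \|Q\|_{L^2}^2$, $E(u_n(t_n)) = E_0$, $P(u_n(t_n)) = P_0$, along with the orthogonality conditions \eqref{eq:ortho1}--\eqref{eq:ortho5}. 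Solvability follows from the coercivity of $L$ on $\{Q\}^\perp$, which makes the Jacobian of the map (parameters,$\eta$) $\mapsto$ (mass,$E$,$P$) invertible. One then solves \eqref{eq:wave} backward from $t_n$ using the local well-posedness in $H^{1/2+\eps}$ of Appendix~\ref{sec:cauchy}.

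Next, I would observe that at $t=t_n$ the hypotheses of Lemma~\ref{lem:back} hold with universal constants. Backward bootstrap via Lemma~\ref{lem:back} produces a time $t_0=t_0(C_0,D_0)<0$, independent of $n$, on which $u_n$ exists and satisfies
\begin{equation*}
\|D^{1/2}\tilde u_n(t)\|_{L^2}^2 + \lambda_n^{-1}(t)\|\tilde u_n(t)\|_{L^2}^2 \lesssim \lambda_n(t), \quad \|D^{1/2+\eps}\tilde u_n(t)\|_{L^2}^2 \lesssim \lambda_n^{1/2-2\eps}(t),
\end{equation*}
together with the asymptotics for $(\lambda_n,b_n,v_n)$ stated in the theorem. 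Using Ascoli--Arzel\`a together with the compact embedding $H^{1/2+\eps}_{\mathrm{loc}}\hookrightarrow H^{1/2}_{\mathrm{loc}}$, I would extract a subsequence $u_{n_k}\to u_c$ in $C^0_{\mathrm{loc}}([t_0,0);H^{1/2}_{\mathrm{loc}})$ with $u_c\in L^\infty_{\mathrm{loc}}([t_0,0);H^{1/2+\eps})$ solving \eqref{eq:wave} distributionally. The modulation parameters converge on $[t_0,0)$ to limits $(\lambda_c,b_c,v_c,\alpha_c,\gamma_c)$ with the claimed asymptotics, the orthogonality conditions pass to $u_c$, and transferring the three conservation laws gives $\|u_c\|_{L^2}^2=\|Q\|_{L^2}^2$, $E(u_c)=E_0$, $P(u_c)=P_0$. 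Since $\lambda_c(t)\sim t^2/(4C_0^2)\to 0$, scaling and the decomposition yield $\|D^{1/2}u_c(t)\|_{L^2}\sim |t|^{-1}$, so the blowup criterion \eqref{blowupcrtiterion} forces maximal time of existence $T=0$.

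The main obstacles are twofold. First, producing the correction $\eta_n$ of the stated size $\Or(\lambda_n^2)$ enforcing the \emph{exact} minimal mass constraint without spoiling the hypotheses of Lemma~\ref{lem:back}; this is where the nondegeneracy of $L$ enters critically. Second, closing the backward bootstrap of Lemma~\ref{lem:back} uniformly in $n$ on a fixed interval $[t_0,0)$, which is delicate because of the slow algebraic $\langle x\rangle^{-2}$ decay of $Q$ and the absence of any smoothing for $e^{-itD}$; this is precisely why the mixed energy/virial functional $\In_A$ of Section~\ref{sec:refenergy} and the auxiliary $H^{1/2+\eps}$ control were introduced. A minor but notable point is that the unbounded phase $\gamma_n(t_n)\sim -4C_0^2/t_n\to-\infty$ does not spoil convergence: only the residual bounded phase $\gamma_n(t)+4C_0^2/t-\gamma_0$ needs to converge, which it does by integrating the modulation equation of Lemma~\ref{lem:Mod_bound} starting from the boundary value at $t=t_n$ and letting $n\to\infty$.
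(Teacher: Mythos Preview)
Your overall strategy---backward construction from times $t_n\uparrow 0$ followed by compactness---is exactly right and matches the paper. Two points deserve comment, one a simplification and one a genuine gap.

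\medskip
\textbf{The correction $\eta_n$ is unnecessary.} The paper takes the initial data at $t_n$ to be the \emph{bare} modulated profile, i.e.\ $\tilde u_n(t_n)\equiv 0$, so that by Lemma~\ref{lem:Qb_properties} one has $\|u_n(t_n)\|_{L^2}^2=\|Q\|_{L^2}^2+\Or(t_n^4)$. The minimal mass is then recovered \emph{only in the limit}: since the defect is $\Or(t_n^4)\to 0$, strong $L^2$ convergence of $u_n(t_0)$ gives $\|u_c\|_{L^2}=\lim_n\|u_n(t_n)\|_{L^2}=\|Q\|_{L^2}$ exactly. Likewise, $E_0$ and $P_0$ are not imposed at $t_n$ but are read off from the limiting asymptotics $b_c^2/\lambda_c\to E_0/e_1$ and $v_c/\lambda_c\to P_0/p_1$ as $t\to 0^-$, together with conservation. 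This sidesteps entirely what you flagged as ``the main obstacle,'' namely constructing $\eta_n$ via an implicit function argument; that step is simply not needed, and Lemma~\ref{lem:back} only requires the approximate mass bound $|\|u\|_{L^2}^2-\|Q\|_{L^2}^2|\lesssim\lambda^2(t_1)$.

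\medskip
\textbf{Gap in the compactness step.} Your extraction via Ascoli--Arzel\`a and the local compact embedding yields only $u_{n_k}\to u_c$ in $H^{1/2}_{\mathrm{loc}}$. This is not enough to ``transfer the conservation laws'': local-in-space convergence gives at best $\|u_c\|_{L^2}\le\liminf\|u_n\|_{L^2}$, and mass could leak to spatial infinity. The paper closes this with an explicit $L^2$-tightness argument: using the Calder\'on commutator bound $\|[\chi_R,D]\|_{L^2\to L^2}\lesssim\|\nabla\chi_R\|_{L^\infty}\lesssim R^{-1}$ one gets $|\frac{d}{dt}\int\chi_R|u_n|^2|\lesssim R^{-1}$ uniformly in $n$, and integrating from $t_n$ to $t_0$ (where the data are well-localized by construction) shows $\{u_n(t_0)\}$ is tight in $L^2$. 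Combined with weak $H^{1/2+\eps}$ convergence, this upgrades to strong convergence in $H^s$ for all $s<\tfrac12+\eps$, after which the continuous dependence of Theorem~\ref{thm:lwp} propagates strong $H^{1/2}$ convergence to every $t\in[t_0,0)$. You should insert this tightness step; without it, neither the mass identity nor the identification of $u_c$ as a genuine $C^0H^{1/2}$ solution follows.
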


\begin{proof}
We use a compactness argument; see also \cite{Me1990,Ma2005,RaSz2011} for such compactness techniques.

Let $t_n \to 0^-$ be a sequence of negative times and let $u_n$ be the solution to \eqref{eq:wave} with initial data at $t=t_n$ given by
\begin{equation} \label{def:un_initial}
u_n(t_n,x) = \frac{1}{\lambda_n^{\frac 1 2}(t_n)} Q_{\Pa_n(t_n)} \left ( \frac{x-\alpha_n(t_n)}{\lambda_n(t_n)} \right ) e^{i \gamma_n(t_n)},
\end{equation}
where the sequences $\Pa_n(t_n) = (b_n(t_n), v_n(t_n))$ and $\{ \gamma_n(t_n), \alpha_n(t_n) \}$ are given by
\begin{equation} \label{def:un_initial2}
b_n(t_n) = - \frac{t_n}{2 C_0^2}, \quad \lambda_n(t_n) = \frac{t^2_n}{4 C_0^2}, \quad \gamma_n(t_n) = \gamma_0 - \frac{4 C_0^2}{t_n},
\end{equation}
\begin{equation} \label{def:un_initial2.5}
v_n(t_n) = \frac{D_0 t_n^2}{2 C_0}, \quad \alpha_n(t_n) = x_0 .
\end{equation} 
By Lemma \ref{lem:Qb_properties}, we have
\begin{equation}
\int | u_n(t_n) |^2 = \int Q^2 + \Or(t_n^4) ,
\end{equation}
and $\tilde{u}(t_n) = 0$ by construction. Thus $u_n$ satisfies the assumptions of Lemma \ref{lem:back}. Hence we can find a backwards time $t_0$ independent of $n$ such that for all $t \in [t_0, t_n)$ we have the geometric decomposition
\begin{equation}
u_n(t,x) = \frac{1}{\lambda_n^{\frac 1 2}(t)} Q_{\Pa_n(t)} \left (t, \frac{x-\alpha_n(t)}{\lambda_n(t)} \right ) + \tilde{u}_n(t,x),
\end{equation}
with the uniform bounds (with some fixed $\eps \in (0,\frac 1 4)$) given by
\begin{equation}
\| D^{\frac 1 2} \tilde{u}_n \|_{L^2}^2 + \frac{\| \tilde{u}_n \|_{L^2}^2}{\lambda_n(t)} \lesssim \lambda_n^3(t),
\end{equation}
\begin{equation}
\| D^{\frac 1 2 + \eps} \tilde{u}_n \|_{L^2}^2 \lesssim \lambda^{\frac 1 2 - 2 \eps}_n(t),
\end{equation}
\begin{equation}
\left | \frac{b_n(t)}{\lambda^{\frac 1 2}_n(t)} - \frac{1}{C_0} \right | \lesssim \lambda_n(t), \quad \left | \lambda_n(t) - \frac{t^2}{4 C_0^2} \right | \lesssim \lambda^{\frac 3 2}_n(t), \quad \left | \frac{v_n(t)}{\lambda_n(t)} - D_0 \right | \lesssim \lambda_n(t) .
\end{equation}
Next, we  conclude that $\{ u_n(t_0) \}_{n=1}^\infty$ converges strongly in $H^{1/2}(\R)$ (after passing to a subsequence if necessary). Indeed, from the uniform bound $\| \tilde{u}(t_0) \|_{H^{1/2+\eps}} \lesssim 1$ we can assume (after passing to a subsequence if necessary) that $u_n(t_0) \weakto u_c$ weakly in $H^s(\R)$ for any $s \in [0,\frac{1}{2} + \eps]$. Moreover, we note the uniform bound
\begin{equation}
\left | \frac{d}{dt} \int \chi_R |u_n|^2 \right | \lesssim \left | \int u_n [\chi_R, i D] \overline{u_n} \right | \lesssim \| \nabla \chi_R \|_{L^\infty}  \| u_n \|_{L^2}^2 \lesssim \frac{1}{R},
\end{equation}
with a smooth cutoff function $\chi_R(x)= \chi(x/R)$ where $\chi(x) \equiv 0$ for $|x| \leq 1$ and $\chi(x) \equiv 1$ for $|x| \geq 2$. Note also that we used the commutator estimate $\| [\chi_R, D] \|_{L^2 \to L^2} \lesssim \| \nabla \chi_R \|_{L^\infty}$; see, e.\,g., \cite{Ca1965,St1993}. By integrating the previous bound from $t_1$ to $t_0$ and using \eqref{def:un_initial}--\eqref{def:un_initial2}, we derive that the sequence $\{ u_n(t_0) \}_{n=1}^\infty$ is tight in $L^2(\R)$. That is, for every $\delta > 0$ there is a radius $R > 0$ such that $\int_{|x| \geq R} |u_n(t_0)|^2 \leq \delta$ for all $n \geq 1$. Combining this fact with the weak convergence of $\{ u_n(t_0) \}_{n=1}^\infty$ in $H^s(\R)$, we deduce that
\begin{equation}
\mbox{$u_n(t_0) \to u_c(t_0)$ strongly in $H^{s}(\R)$ for every $s \in [0,\frac{1}{2} + \eps)$}.
\end{equation}
Thus, by the local wellposedness (see Appendix \ref{sec:cauchy}), we obtain that
\begin{equation}
\mbox{$u_n(t) \to u_c(t)$ strongly in $H^{1/2}(\R)$ for $t \in [t_0, T_c)$},
\end{equation}
where $T_c>t_0$ is the life time of $u_c$ on the right. Moreover, $u_c$ admits for $t<\min\{T_c,0\}$ a geometrical decomposition of the form stated in Theorem \ref{thm:exist} with
\begin{equation}
b_n(t) \to b_c(t),  \quad v_n(t) \to v_c(t), \quad \lambda_n(t) \to \lambda_c(t), \quad \gamma_n(t) \to \gamma_c(t) , \quad \alpha_n(t) \to \alpha_c(t),
\end{equation}
and $\{ b_c(t), v_c(t), \lambda_c(t)\}$ satisfy the bounds stated in Theorem \ref{thm:exist}.  Moreover, we derive the bounds for $\| \tilde{u}_n \|_{L^2} \lesssim \lambda_c$ and $\| \tilde{u}_c \|_{H^{1/2}} \lesssim \lambda_c^{\frac 1 2}$.  In particular, this implies that $u_c(t)$ blows up at time $T_c=0$ such that $\| D^{\frac 1 2} u_c(t) \|_{L^2}^2 \sim \lambda^{-1}(t) \sim |t|^{-2}$ as $t \to 0^-$. In addition, we deduce from $L^2$-mass conservation and the strong convergence that
$$
\| u_c \|_{L^2} = \lim_{n \to +\infty} \| u_n(t_n) \|_{L^2} = \| Q \|_{L^2}.
$$ 
As for the energy, we notice that
$$
E(u_c(t)) = \frac{b_c^2}{\lambda_c} e_1 + o(1) \to E_0 \quad \mbox{as} \quad t \to 0^- ,
$$
by the choice of $C_0$ and $b_n(t_n)$ and $\lambda_n(t_n)$. By energy conservation, this implies that
$$
E(u_c) = E_0 .
$$ 
Also, we observe that
$$
P(u_c(t)) = \frac{v_c}{\lambda_c} p_1 + o(1) \to P_0 \quad \mbox{as} \quad t \to 0^-, 
$$
by our choice of $D_0$ and $v_n(t_n)$ and $\lambda_n(t_n)$. By momentum conservation, this shows that 
$$
P(u_c) = P_0.
$$
Next, we recall the rough bound
$$
\left | (\tilde{\gamma}_n)_s \right | \lesssim \lambda_n.
$$
Therefore, using that $ds/dt=\lambda^{-1}$ and the estimates for $\lambda_n$,
$$
\left | \frac{d}{dt} \left ( \gamma_n + \frac{4 C_0^2}{t} \right ) \right | = \frac{1}{\lambda_n} \left | (\gamma_n)_s - \frac{4 C_0^2 \lambda_n}{t^2} \right | = \frac{1}{\lambda_n} \left | (\tilde{\gamma}_n)_s - \left ( \frac{4 C_0^2 \lambda_n}{t^2} - 1 \right ) \right | \lesssim 1.
$$
Integrating this bound and using \eqref{def:un_initial2} and $\lambda_c \sim t^2$, we find
$$
\gamma_n(t) + \frac{4 C_0^2}{t} = \gamma_0 + \Or(\lambda_c^{\frac 1 2}),
$$
whence the claim for $\gamma_c$ follows, since we have $\lambda_c \sim t^2$. Finally, we recall the rough bound $\left | \frac{(\alpha_n)_s}{\lambda_n} + v_n \right | \lesssim \lambda_n$. Integrating this and using the bounds for $v_n$ and $\lambda_n$, we deduce that
$$
\left | \frac{d}{dt} \left ( \alpha_n - x_0 \right ) \right | =  \left | \frac{(\alpha_n)_s}{\lambda_n} \right | \lesssim \lambda_n + \left | v_n \right |  \lesssim \lambda_n.
$$
Integrating this bound and using \eqref{def:un_initial2.5}, we find that
$$
\alpha_n(t) = x_0 + \Or(\lambda_c^{\frac 3 2}).
$$
which shows that the claim for $\alpha_c(t)$ holds.
 
The proof of Theorem \ref{thm:exist} is now complete. \end{proof}

\begin{appendix}

\section{Decay and Smoothing Estimates for $L_+$ and $L_-$}

In this section, we collect some regularity and decay estimates concerning the linearized operators $L_-$ and $L_+$.

\begin{lemma} \label{lem:decay}
Let $f, g \in H^k(\R)$ for some $k \geq 0$ and suppose $f \perp Q$ and $g \perp Q'$. Then we have the regularity bounds
$$
\| L^{-1}_- f \|_{H^{k+1}} \lesssim_k \| f \|_{H^k}, \quad \| L_+^{-1} g \|_{H^{k+1}} \lesssim_k \| g \|_{H^k},
$$
and the decay estimates
$$
\| \langle x \rangle^2 L^{-1}_- f \|_{L^\infty} \lesssim  \| \langle x \rangle^2 f \|_{L^\infty},   \quad \| \langle x \rangle^2 L_+^{-1} g \|_{L^\infty} \lesssim \| \langle x \rangle^2 f \|_{L^\infty}.
$$
\end{lemma}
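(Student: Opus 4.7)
The plan is to bootstrap both claims from a base $L^2$ theory, using the factorization $L_\pm = (D+1) - V_\pm$ with $V_- = Q^2$ and $V_+ = 3Q^2$. First I would recall from \cite{FrLe2012} that each $L_\pm$ is a nonnegative self-adjoint operator on $L^2(\R)$ with $\mathrm{ker}\,L_- = \mathrm{span}\{Q\}$ and $\mathrm{ker}\,L_+ = \mathrm{span}\{Q'\}$, both with a positive spectral gap. Consequently, the bounded inverse $L_\pm^{-1}$ exists on the orthogonal complement of its kernel, yielding the base estimate $\|L_\pm^{-1}f\|_{L^2} \lesssim \|f\|_{L^2}$ when the orthogonality is in force. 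Note also that $\|\langle x\rangle^2 f\|_{L^\infty}<\infty$ implies $f\in L^2(\R)$ with $\|f\|_{L^2}\lesssim \|\langle x\rangle^2 f\|_{L^\infty}$, so this regime falls under the same $L^2$ theory.

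For the $H^{k+1}$ regularity bound, I would rewrite $L_- u = f$ as $(D+1)u = f + Q^2 u$ and exploit that the Fourier multiplier $(|\xi|+1)^{-1}$ gains one derivative, so that $(D+1)^{-1}$ continuously maps $H^j(\R)$ to $H^{j+1}(\R)$ for each $j\ge 0$. Since $Q$ is smooth with algebraic decay and all derivatives bounded, multiplication by $Q^2$ maps $H^j \to H^j$ continuously. Thus $u=(D+1)^{-1}(f+Q^2 u)\in H^{j+1}$ whenever $u\in H^j$, and iterating this bootstrap starting from $u\in L^2$ yields $u\in H^{k+1}$ once $f\in H^k$. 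The same argument applies verbatim to $L_+$.

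For the pointwise weighted decay, the key input is that the convolution kernel $K$ of $(D+1)^{-1}$ on $\R$, namely
\begin{equation*}
K(x) = \frac{1}{\pi}\int_0^{+\infty} \frac{\cos(x\xi)}{\xi+1}\,d\xi = \frac{1}{\pi}\int_0^{+\infty} \frac{s\, e^{-s|x|}}{s^2+1}\,ds,
\end{equation*}
satisfies $|K(x)|\lesssim \langle x\rangle^{-2}$ at infinity (as the Laplace transform is dominated for large $|x|$ by the behavior $s/(s^2+1)\sim s$ near $s=0$, giving $\int_0^\infty s e^{-s|x|}ds = |x|^{-2}$), together with an integrable logarithmic singularity at the origin. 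Combining this with the elementary weighted convolution inequality
\begin{equation*}
\int_\R \langle x-y\rangle^{-2}\langle y\rangle^{-2}\,dy \lesssim \langle x\rangle^{-2},
\end{equation*}
shows that $(D+1)^{-1}$ preserves the weight $\langle x\rangle^2$ in $L^\infty$. Writing once more $u = (D+1)^{-1}(f + Q^2 u)$ and noting that $|Q^2(x)u(x)|\lesssim \langle x\rangle^{-4}\|u\|_{L^\infty}$, where $\|u\|_{L^\infty}$ is controlled by Sobolev embedding $H^1 \hookrightarrow L^\infty$ in dimension one (applied after the regularity step), we deduce $\|\langle x\rangle^2 u\|_{L^\infty} \lesssim \|\langle x\rangle^2 f\|_{L^\infty}$. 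The argument for $L_+^{-1} g$ is identical modulo replacing $Q^2$ by $3Q^2$ and $Q$ by $Q'$ in the orthogonality.

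The only non-routine ingredient is the sharp decay analysis of the $1$D convolution kernel of $(D+1)^{-1}$: unlike the Green's function of $-\Delta + 1$, which is exponentially decaying, the nonlocal symbol $|\xi|+1$ produces only algebraic $\langle x\rangle^{-2}$ decay and a logarithmic singularity at the origin. This algebraic decay is precisely what dictates the exponent $2$ in the weighted $L^\infty$ bound and also explains, a posteriori, the slow decay $Q(x)\sim\langle x\rangle^{-2}$ of the ground state, since $Q = (D+1)^{-1}(Q^3)$. Once the kernel bounds are in place, the rest of the proof is a standard iteration.
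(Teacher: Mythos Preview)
Your proposal is correct and follows essentially the same approach as the paper: both rewrite $L_\pm u = f$ in resolvent form $u = (D+1)^{-1}(V_\pm u + f)$, use the $|x|^{-2}$ decay of the convolution kernel of $(D+1)^{-1}$ (together with its local integrability), and bootstrap from the base $L^2$ bound; your treatment is in fact more self-contained, since the paper defers the kernel asymptotics and the decay bootstrap to the references \cite{FrLe2012} and \cite{FrJoLe2007b}, whereas you supply the Laplace-transform representation of $K$ and the explicit weighted convolution inequality.
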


\begin{proof}
It suffices to prove the lemma for $L_-^{-1} f$, since the estimates for $L_+^{-1} g$ follow in the same fashion.

To show the regularity bound, we can (by interpolation) assume that $k \in \mathbb{N}$ is an integer. Let $g=L_-^{-1} f$ and thus
$$
D g + g = Q^2 g + f .
$$ 
Note that $Q \in W^{k, \infty}(\R)$ for any $k \in \mathbb{N}$ by Sobolev embeddings and the fact that $Q \in H^s(\R)$ for all $s \geq 0$. Applying $\nabla^k + 1$ to the equation above and using Leibniz rule and H\"older, we find that
\begin{equation} \label{ineq:Lregbound}
\| g \|_{H^{k+1}_x} \sim \| (\nabla^k +1) (D g +g) \|_{L^2} \lesssim_k \| Q \|_{W^{k,\infty}}^2 \| g \|_{H^k} + \| f \|_{H^k} . 
\end{equation}
Note in particular that $\| g \|_{L^2} = \| L^{-1}_- f \|_{L^2} \lesssim \| f \|_{L^2}$ holds, since $L_-$ has a bounded inverse on $Q^\perp$. Hence \eqref{ineq:Lregbound} shows that the desired regularity estimates is true for $k=0$. By induction, we obtain the desired estimate $\| L^{-1}_- f \|_{H^{k+1}} \lesssim_k \| f \|_{H^k}$ for any integer $k \in \mathbb{N}$.

To show the decay estimate, we argue as follows. Assume that $\| \langle x \rangle^2 f \|_{L^\infty_x} < +\infty$, because otherwise there is nothing to prove. As above, let $g = L^{-1}_- f$ and rewrite the equation satisfied by $g$ in resolvent form:
$$
g = \frac{1}{D+1} Q^2 g + \frac{1}{D+1} f.
$$
Let $R(x-y) = \mathcal{F}^{-1} ( \frac{1}{|\xi| + 1})(x-y)$ denote the associated kernel of the resolvent $(D +1)^{-1}$. From \cite{FrLe2012} we recall the standard fact that $R \in L^p(\R)$ for any $1 < p < \infty$. Since $f \in L^2(\R)$, this implies that $(R \ast f)(x)$ is continuous and vanishes as $|x| \to \infty$.  Moreover (see, e.\,g., \cite{FrLe2012} again) we have the pointwise bound
$$
0 < R(z) \lesssim \frac{1}{|x|^2}, \quad \mbox{for $|x| \geq 1$}.
$$ 
Using this bound and our decay assumption on $f(x)$, it is elementary to check that
$$
| (R \ast f)(x)| \lesssim \min \{1, |x|^{-2} \}  .
$$
Using this bound, we can bootstrap the equation for $g$ and using that $Q^2(x)$ is continuous and vanishes at infinity; we refer to \cite{FrJoLe2007b} for details on a similar decay estimate. This shows that $|g(x)| \lesssim \langle x \rangle^{-2}$ as desired.

\end{proof}

\section{Coercivity Estimates for the Localized Energy}

In the following, we assume that $A > 0$ is a sufficiently large constant. Let $\phi : \R \to \R$ be the smooth cutoff function introduced in Section \ref{sec:refenergy}. For $\eps = \eps_1 + i \eps_2 \in H^{1/2}(\R)$, we consider the quadratic forms
\begin{align} \label{def:LplusA}
L_{+,A}(\eps_1) & :=  \int_{s=0}^\infty \sqrt{s} \int \phi''_A |\nabla \eps_{1s} |^2 \, dx \, ds  + \int |\eps_1|^2 - 3 \int Q^{2} |\eps_1|^2, \\
L_{-,A}(\eps_2) & :=  \int_{s=0}^\infty \sqrt{s} \int \phi''_A |\nabla \eps_{2s} |^2 \, dx \, ds  +\int |\eps_2|^2 - \int Q^2 |\eps_2|^2, \label{def:LminusA}
\end{align}
where $\phi''_A(x) = \phi''(x/A)$. As in Lemma \ref{lem:morawetz}, we denote
\begin{equation} \label{def:f_mu_re}
u_{s} = \sqrt{ \frac{2}{\pi}} \frac{1}{-\Delta + s} u, \quad \mbox{for $s > 0$}.
\end{equation}
We start with the following simple identity.

\begin{lemma} \label{lem:sobo_ident}
For $u \in H^{1/2}(\R)$, we have
\begin{equation} \label{eq:sobo_ident1}
 \int_{s=0}^\infty \sqrt{s} \int |\nabla u_s|^2 \, dx \, ds = \| D^{\frac{1}{2}} u \|_{L^2}^2 .
\end{equation}
\end{lemma}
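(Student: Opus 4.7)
The plan is to reduce the identity to a one-dimensional Fourier computation by Plancherel. On the Fourier side we have
$$
\widehat{u_s}(\xi) = \sqrt{\tfrac{2}{\pi}}\, \frac{1}{\xi^2+s}\, \widehat{u}(\xi), \qquad \widehat{\nabla u_s}(\xi) = i\xi\,\sqrt{\tfrac{2}{\pi}}\, \frac{1}{\xi^2+s}\, \widehat{u}(\xi),
$$
so that
$$
\int_{\R} |\nabla u_s|^2\, dx = \frac{2}{\pi} \int_{\R} \frac{\xi^{2}}{(\xi^{2}+s)^{2}}\, |\widehat{u}(\xi)|^{2}\, d\xi.
$$
I would then apply Fubini (the integrand is positive, so no justification issue arises) to obtain
$$
\int_{0}^{\infty} \sqrt{s} \int_{\R} |\nabla u_s|^{2}\, dx\, ds = \frac{2}{\pi} \int_{\R} \xi^{2}\, |\widehat{u}(\xi)|^{2} \left( \int_{0}^{\infty} \frac{\sqrt{s}}{(\xi^{2}+s)^{2}}\, ds \right) d\xi.
$$

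The key step is then the scalar identity
$$
\int_{0}^{\infty} \frac{\sqrt{s}}{(\xi^{2}+s)^{2}}\, ds = \frac{\pi}{2\,|\xi|},
$$
which I would prove by the change of variables $s = \xi^{2} t$ and the classical Beta integral $\int_{0}^{\infty} \frac{\sqrt{t}}{(1+t)^{2}}\, dt = B(\tfrac{3}{2},\tfrac{1}{2}) = \frac{\pi}{2}$. Alternatively, this identity is nothing but the $\beta = \tfrac{1}{2}$ case of the spectral representation
$$
x^{\beta} = \frac{\sin(\pi \beta)}{\pi} \int_{0}^{\infty} s^{\beta-1}\, \frac{x}{x+s}\, ds
$$
already invoked in Section~\ref{sec:refenergy}, applied to $x = \xi^{2}$ (so $x^{1/2} = |\xi|$) after differentiating the integrand in $x$ once, which is essentially the calculation that motivates the normalization factor $\sqrt{2/\pi}$ in the definition of $u_s$.

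Inserting this into the preceding display gives
$$
\int_{0}^{\infty} \sqrt{s} \int_{\R} |\nabla u_s|^{2}\, dx\, ds = \int_{\R} |\xi|\, |\widehat{u}(\xi)|^{2}\, d\xi = \| D^{1/2} u \|_{L^{2}}^{2},
$$
which is the desired identity. There is no real obstacle here: the only substantive point is the Beta integral evaluation, everything else is bookkeeping with Plancherel and Fubini. For $u \in H^{1/2}(\R)$, both sides are finite and equal, and for $u \notin H^{1/2}(\R)$ both sides are $+\infty$, so the identity extends to all tempered distributions for which either side makes sense.
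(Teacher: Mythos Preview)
Your proof is correct and follows essentially the same route as the paper's: Plancherel plus Fubini reduce the identity to the scalar evaluation $\int_0^\infty \frac{\sqrt{s}}{(\xi^2+s)^2}\,ds = \frac{\pi}{2|\xi|}$, which the paper leaves implicit and you justify via the Beta integral. The only difference is that you spell out more detail than the paper does.
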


\begin{proof}
By applying Fubini's theorem and using the Fourier transform, we find that
$$
 \int_{s=0}^{+\infty} \sqrt{s} \int |\nabla u_s|^2 \, dx \, ds = \frac{2}{\pi} \int \int_{s=0}^{+\infty} \frac{\sqrt{s} \, ds}{(\xi^2 + s)^2} |\xi|^2 | \hat{u}(\xi) |^2 \, d \xi = \int |\xi|  | \hat{u}(\xi) |^2 \, d \xi = \| D^{\frac 1 2} f \|_{L^2}^2,
$$ 
which shows the claim. \end{proof}

\begin{remark}
Clearly, the proof of Lemma \ref{lem:sobo_ident}  shows that
\begin{equation} \label{eq:sobo_ident2}
\frac{2}{\pi} \int_{s=0}^{+\infty}  \sqrt{s} \int | D^{\alpha} u_s|^2 \, dx \, ds = \| D^{\alpha-\frac{1}{2}} u \|_{L^2}^2, \quad \mbox{for $u \in \mathcal{S}(\R)$},
\end{equation}
with any exponent $\alpha \in \R$, provided that for $\alpha \leq 0$ we also impose that $\hat{u}(\xi)$ vanishes identically in a neighborhood around $\xi =0$.
\end{remark}

Next, we establish a technical result, which shows that, when taking the limit $A \to +\infty$, the quadratic form $\int_{s \geq0} \sqrt{s} \int \phi''_A |\nabla u_s|^2 \,dx \, ds + \| u \|_{L^2}^2$ defines a weak topology that serves as a useful substitute for the weak convergence in $H^{1/2}(\R)$. The precise statement reads as follows. 

\begin{lemma} \label{lem:lower_TA}
Let $A_n \to +\infty$ and suppose that $\{ u_n \}_{n=1}^\infty$ is a sequence in $H^{1/2}(\R)$ such that
$$
\int_{s=0}^{+\infty} \sqrt{s} \int \phi''_{A_n} |\nabla (u_n)_s|^2 \, dx \, ds +  \| u_n \|_{L^2}^2 \leq C,
$$ 
for some constant $C> 0$ independent of $n \geq 1$. Then, after possibly passing to a subsequence of $\{u_{n} \}_{n=1}^\infty$, we have that
$$
\mbox{$u_n \weakto u$ weakly in $L^2(\R)$ and $u_n \to u$ strongly in $L^2_{\mathrm{loc}}(\R)$,}
$$ 
and $u$ belongs to $H^{1/2}(\R)$. Moreover, we have the bound
$$
\| D^{\frac 1 2} u \|_{L^2}^2 \leq \liminf_{n \to +\infty} \int_{s=0}^{+\infty} \sqrt{s} \int \phi''_{A_n} |\nabla( u_n)_s|^2 \, dx \, ds.
$$
\end{lemma}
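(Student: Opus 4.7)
The plan is to proceed in three steps: extract a weakly convergent subsequence in $L^2$, upgrade it to strong local convergence, and then establish the lower-semicontinuity inequality.

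From the uniform bound $\|u_n\|_{L^2}\leq C$ and Banach--Alaoglu, I extract a subsequence with $u_n \weakto u$ weakly in $L^2(\R)$. For the strong $L^2_{\mathrm{loc}}$ convergence, the key observation is that $\phi''\equiv 1$ on $[-1,1]$, so for any compact $K\Subset\R$ and $n$ large enough that $A_n\geq \sup_{x\in K}|x|$, the weight satisfies $\phi''_{A_n}\equiv 1$ on $K$. Consequently the assumption yields the localized bound $\int_0^\infty\sqrt{s}\int_K|\nabla(u_n)_s|^2\,dx\,ds\leq C$, which together with $\|u_n\|_{L^2}\leq C$ provides an $H^{1/2}$-type control of $u_n$ near $K$. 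Combined with Rellich's compact embedding (applied after localization by a cutoff $\chi\in C_c^\infty$ supported near $K$), I obtain strong $L^2(K)$ convergence of $u_n$ along a further subsequence.

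The lower-semicontinuity bound is the core of the lemma. I set
\[
w_n(x,s) := s^{1/4}\sqrt{\phi''_{A_n}(x)}\,\nabla(u_n)_s(x),
\]
which by assumption is bounded in $L^2(\R\times\R_+)$. Extracting a further subsequence, $w_n\weakto w$ weakly in $L^2(\R\times\R_+)$. I identify $w(x,s)=s^{1/4}\nabla u_s(x)$ by duality against $\varphi\in C_c^\infty(\R\times\R_+)$: writing $\mathrm{supp}\,\varphi\subset K\times[a,b]$ with $K\Subset\R$ and $0<a<b$, the factor $\sqrt{\phi''_{A_n}(x)}$ equals $1$ on $\mathrm{supp}\,\varphi$ for all $n$ large. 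For each fixed $s>0$, the operator $(-\Delta+s)^{-1}:L^2\to H^2$ is weak-to-weak continuous, hence $\nabla(u_n)_s\weakto \nabla u_s$ weakly in $L^2(\R)$; combined with the uniform bound $\|\nabla(u_n)_s\|_{L^2}\lesssim \|u_n\|_{L^2}/\sqrt{s}$, dominated convergence in the $s$-variable on $[a,b]$ lets me pass to the limit, so $w=s^{1/4}\nabla u_s$. Weak lower semicontinuity of the $L^2$-norm then gives
\[
\int_0^\infty\sqrt{s}\int|\nabla u_s|^2\,dx\,ds\leq \liminf_{n\to\infty}\int_0^\infty\sqrt{s}\int \phi''_{A_n}|\nabla(u_n)_s|^2\,dx\,ds,
\]
and Lemma \ref{lem:sobo_ident} rewrites the left-hand side as $\|D^{1/2}u\|_{L^2}^2$; in particular $u\in H^{1/2}(\R)$.

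The main technical obstacle is justifying the $H^{1/2}$-type control needed for the strong $L^2_{\mathrm{loc}}$ convergence, since the weighted extension integral is not literally a Gagliardo-type seminorm of the localization $\chi u_n$. I expect this to follow from a Caffarelli--Silvestre-style harmonic-extension argument combined with kernel estimates on $(-\Delta+s)^{-1}$ and a commutator bound $\|[D^{1/2},\chi]u\|_{L^2}\lesssim\|u\|_{L^2}$ from pseudodifferential calculus, which together dominate $\|D^{1/2}(\chi u_n)\|_{L^2}^2$ by the localized extension energy modulo a controlled $L^2$-remainder; the low-$s$ regime of the $s$-integral is where one must work, the high-$s$ regime being absorbed by $\|u_n\|_{L^2}^2$.
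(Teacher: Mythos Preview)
Your weak-limit identification argument for the lower-semicontinuity inequality is correct and is in fact cleaner than the paper's route: the paper proves pointwise convergence of $\nabla(u_n)_s(x)$ via the explicit kernel $\frac{1}{\sqrt{2\pi}}\int e^{-\sqrt{s}|x-y|}\frac{x-y}{|x-y|}u_n(y)\,dy$ and then combines dominated convergence on compact boxes in $(x,s)$ with Fatou, whereas you package everything into a single weak $L^2(\R\times\R_+)$ limit. Both give the same conclusion; your version is more streamlined.

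The genuine gap is in the $L^2_{\mathrm{loc}}$ compactness step. Your sketch (commutator $[D^{1/2},\chi]$ plus extension-type estimates) is in the right direction, but it is missing one ingredient that the paper uses and that appears to be essential in dimension one: a low/high frequency decomposition $u_n=u_n^l+u_n^h$ with $\widehat{u_n^h}$ supported in $\{|\xi|\geq 1\}$. The point is that when you run the commutator argument on $(\chi_R u_n)_s-\chi_R(u_n)_s$, the source term $(-\Delta+s)w^s=c\bigl(2\nabla\chi_R\cdot\nabla(u_n)_s+(u_n)_s\Delta\chi_R\bigr)$ forces you, in the small-$s$ regime, to control quantities of the type $\int_0^1 s^{-1/2}\|(u_n)_s\|_{L^2}^2\,ds$ or $\int_0^1 s^{-1/2}\|\nabla(u_n)_s\|_{L^2}^2\,ds$. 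On the Fourier side these scale like $\|D^{-1/2}u_n\|_{L^2}^2$, which is \emph{infinite} for generic $u_n\in L^2(\R)$ (the integral $\int|\xi|^{-1}|\widehat{u_n}|^2\,d\xi$ diverges at $\xi=0$). Localizing the pointwise bound $\|(u_n)_s\|_{L^\infty}\lesssim s^{-1/4}\|u_n\|_{L^2}$ does not save you either, because after the $1/s$ loss from the energy estimate on $w^s$ you land on $\int_0^1 s^{-1}\,ds=\infty$. The paper sidesteps this by treating $u_n^l$ trivially (its $H^{1/2}$ norm is controlled by $\|u_n\|_{L^2}$ since it is band-limited) and running the commutator argument only on $u_n^h$, where $\|D^{-1/2}u_n^h\|_{L^2}\lesssim\|u_n\|_{L^2}$; this is exactly what makes the small-$s$ integral converge. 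So your sketch would go through once you insert this frequency splitting, but as written the ``low-$s$ regime is where one must work'' remark understates the difficulty: without the splitting the argument actually diverges there.
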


\begin{proof}
Let $\zeta\in \mathcal S(\Bbb R)$ be a smooth cutoff function in Fourier space that satisfies
\begin{equation*}
\hat{\zeta}(\xi)=\left\{\begin{array}{ll} 1 & \quad \mbox{for $|\xi|\leq 1$}, \\ 0 & \quad \mbox{for $|\xi|\geq 2$} .\end{array}\right.
\end{equation*}
For any $u \in H^{1/2}(\R)$, we write $u= u^l + u^h$ with 
\begin{equation*}
 \hat{u}^l=\hat{\zeta}\hat{u}, \quad \hat{u}^h=(1-\hat{\zeta})\hat{u}.
\end{equation*}
Recall the definition \eqref{def:f_mu_re}, we readily notice the relations
\begin{equation*}
(u^l)_s=(u_s)^l, \quad  (u^h)_s=(u_s)^h.
\end{equation*}
Hence, we can use the notation $u_s^l = (u^l)_s$ and $u_s^h = (u^l)_s$ in the following.

\subsubsection*{Step 1: Control of $u^h$}

Let $\chi\in \mathcal C^{\infty}_0(\Bbb R)$ be a smooth cutoff function such that 
$$\chi(x)=\left\{\begin{array}{ll} 1 & \quad \mbox{for $|x|\leq 1$}, \\ 0 & \quad \mbox{for $|x|\geq 2$}.\end{array}\right.$$ 
For any $R>0$ given, we set
 $$\chi_R(x)=\chi\left(\frac xR\right).$$ 
We now claim the following control: For any $R>0$, there exist constants $C_R >0$ and $A_0=A_0(R)>0$ such that $\forall A \geq A_0$ and $\forall u\in H^{\frac 12}$, we have
\begin{equation}
\label{controllhigh}
\int |D^{\frac 12}(\chi_Ru^h)|^2\leq C_R \left [ \int_{s=0}^\infty \sqrt{s} \int \phi''_A | \nabla u^h_s|^2 \, dx \, ds +  \| u \|_{L^2}^2 \right ] . 
\end{equation}
Indeed, from definition \eqref{def:f_mu_re} we see that
 $$-\Delta(\chi_Ru^h)_s+s(\chi_Ru^h)_s= \sqrt{ \frac{2}{\pi}} \chi_Ru^h. $$ 
On the other hand, an elementary calculation shows that
\bee
-\Delta (\chi_Ru^h_s)+s\chi_Ru^h_s& = & \chi_R(-\Delta u^h_s+su^h_s)-2\nabla \chi_R\cdot\nabla u^h_s-u^h_s\Delta \chi_R\\
& = & \sqrt{\frac{2}{\pi}} \chi_Ru^h -2\nabla \chi_R\cdot\nabla u^h_s-u^h_s\Delta \chi_R .
\eee
Therefore, the function
\be\label{cnoeeon}
w^s :=\sqrt{\frac{\pi}{2}} \left \{ (\chi_Ru^h)_s-\chi_Ru^h_s \right \}
\ee 
satisfies the equation
$$-\Delta w^s+sw^s=\sqrt{\frac{\pi}{2}} \left \{ 2\nabla\chi_R\cdot u^h_s + u^h_s\Delta \chi_R \right \} .$$ 
Hence, we deduce the bound
$$
\int|\nabla w^s|^2+s\int|w^s|^2 \lesssim   \int\left \{ |\nabla \chi_R||\nabla u^h_s|+|u^h_s||\Delta \chi_R|\right \} |w^s|
$$ 
and, by using the Cauchy--Schwarz inequality, we conclude that
\be
\label{boundtwo}
\int|\nabla w^s|^2+s\int|w^s|^2\lesssim C_R\left\{\int_{|x|\leq 2R}|\nabla u_s^h|^2+\int|u^h_s|^2\right\}, \quad \mbox{for $s \geq 1$},
\ee
\be
\label{boundone}
\int|\nabla w^s|^2+s\int|w_s|^2\lesssim\frac{C_R}{s}\left\{\int|\nabla u_s^h|^2+\int|u^h_s|^2\right\}, \quad \mbox{for $0<s\leq 1$}.
\ee
Next, we apply identity \eqref{eq:sobo_ident2} while noting that $\widehat{u}^h(\xi) =0$ for $|\xi| \leq 1$. For some sufficiently large $A>A_0(R)$, we thus obtain
\bee
\int_{s=1}^{+\infty} \sqrt{s}\int|\nabla w^s|^2 \, dx \, ds&\lesssim& C_R\int_{s=0}^{+\infty} \sqrt{s}\left\{\int_{|x|\leq 2R}|\nabla u_s^h|^2dx+\int|u^h_s|^2 dx\right\}ds\\
& \lesssim & C_R\left[\int_{s=0}^{+\infty}\sqrt{s}\int\phi_{A}''|\nabla u^h_s|^2 \, dx \, ds+\|D^{-\frac 12}u^h\|_{L^2}^2\right]\\
& \lesssim & C_R\left[\int_{s=0}^{+\infty}\sqrt{s}\int\phi_{A}''|\nabla u^h_s|^2 \, dx \, ds+\|u\|_{L^2}^2\right],
\eee
\bee
\int_{s=0}^{1}\sqrt{s}\int|\nabla w^s|^2dxds&\lesssim &C_R \int_{s=0}^1\frac{\sqrt{s}}{s}\int\frac{(1+|\xi|^2)|\hat{u}^h|^2}{(s+|\xi|^2)^2} \, d\xi \, ds\\
& \leq & C_R\int_{s=0}^1\frac{ds}{\sqrt{s}}\int\frac{1+|\xi|^2}{|\xi|^4}|\hat{u}^h|^2 \, d\xi \, ds\lesssim C_R\|u\|_{L^2}^2.
\eee
Using \eqref{eq:sobo_ident1} and the previous bounds, we find that
\bee
\|D^{\frac12}(\chi_Ru^h)\|_{L^2}^2& = & \int_{s=0}^{+\infty}\sqrt{s}\int|\nabla (\chi_Ru^h)_s|^2 \, dx \, ds\\
& \lesssim & \int_{s=0}^{+\infty}\sqrt{s}\int|\nabla w^s|^2 \, dx \, ds+\int_{s=0}^{+\infty}\sqrt{s}\int|\nabla (\chi_R(u^h)_s)|^2 \, dx \, ds\\
& \lesssim &  C_R\left[\int_{s=0}^{+\infty}\sqrt{s}\int\phi_{A}''|\nabla u^h_s|^2 \, dx \, ds+\|u\|_{L^2}^2\right]+\int_{s=0}^{+\infty}\sqrt{s}\int|u^h|^2 \, dx \, ds\\
& \lesssim & C_R\left[\int_{s=0}^{+\infty}\sqrt{s}\int\phi_{A}''|\nabla u^h_s|^2 \, dx \, ds+\|u\|_{L^2}^2\right].
\eee
This completes the proof of estimate \eqref{controllhigh}.

\subsubsection*{Step 2: Conclusion}

Let $\{ u_n \}_{n=1}^\infty$ satisfy the assumptions in Lemma \ref{lem:lower_TA}. By \eqref{eq:sobo_ident1}, we have for all $A>0$ that
\bee
\int_{s=0}^{+\infty}\sqrt{s}\int \phi_{A}''|\nabla (u^l_n)_s|^2 \, dx \, ds & \leq&  \int_{s=0}^{+\infty}\sqrt{s}\int|\nabla (u^l_n)_s|^2 \, dx \, ds= \|D^{\frac 12}u_n^l\|_{L^2}^2\\
& \leq & C\|u_n\|_{L^2}^2\leq C.
\eee
Here we used the frequency localization of $u_n^l$ in the last step. Thus the assumed bound in Lemma \ref{lem:lower_TA} ensures that
\be
\label{assumptionhigh}
\int_{s=0}^{+\infty}\sqrt{s}\int \phi_{{A_n}}''|\nabla (u^h_n)_s|^2 \, dx \, ds\leq C.
\ee
We therefore conclude from \eqref{controllhigh} that, for all $R >0$, the $\{ u_n \}_{n=1}^\infty$ is a bounded sequence in $H^{1/2}(B_R)$ and $L^2(\R)$. Hence, by a simple diagonal extraction argument, we can find $u\in L^2(\Bbb R)$ and we can assume by passing to a subsequence if necessary that
$$
\mbox{$u_{n}\weakto u$ in $L^2(\R)$ and $u_n \weakto  u$ in $H^{1/2}(B_R)$ for all $R >0$}.
$$
By the compactness of the Sobolev embedding $H^{1/2}(\R) \hookrightarrow L^2_{\mathrm{loc}}(\R)$, we also have that
 $$ 
 \mbox{$u_n \to u$ in $L^2_{\mathrm{loc}}(\R)$}.
 $$ 
 
 It remains to show the ``weak lower semicontinuity property'' given by
 \begin{equation} \label{ineq:lwsc}
 \| D^{\frac 12}u\|_{L^2}^2=\int_{s=0}^{+\infty}\sqrt{s}\int|\nabla u_s|^2 \, dx \, ds\leq \liminf_{n\to +\infty}\int_{s=0}^{+\infty}\sqrt{s}\int\phi_{A_{n}}''|\nabla (u_n)_s|^2 \, dx \, ds
 \end{equation}
Indeed, we first we note that
 $$
 \nabla (u_n)_s(x) = \frac{1}{\sqrt{2 \pi}} \int e^{-\sqrt{s} |x-y|} \frac{x-y}{|x-y|} u_n(y) dy.
 $$
 Since $u_n \weakto u$ weakly in $L^2(\R)$ and $e^{-\sqrt{s} |x-y|} \frac{x-y}{|x-y|} \in L^2_y(\R)$ for any $x \in \R$, we thus obtain
 $$
\mbox{$\nabla ( u_n)_s(x) \to \nabla u_s(x)$ pointwise on $\R$ for any $s >0$}.
 $$
 Next,  by the Cauchy--Schwarz inequality, we derive the uniform pointwise bound
 $$
 |\nabla (u_n)_s(x)|\lesssim \| e^{-\sqrt{s} |\cdot|} \|_{L^2} \| u_n \|_{L^2} \lesssim \frac{C}{s^{1/4}} ,
 $$
 using that $\| u_n \|_{L^2} \leq C$ by assumption. Let $0 < \eps < 1$ and $B >0$ now be given. By the dominated convergence theorem, we deduce that
\bee
\int_{s=\eps}^{1/\eps}\sqrt{s}\int_{|x|\leq B}|\nabla u_s|^2 \, dx \, ds & = & \lim_{n\to +\infty}\int_{s=\eps}^{1/\eps}\sqrt{s}\int_{|x|\leq B}|\nabla (u_{n})_s|^2 \, dx \, ds\\
& \leq &  \liminf_{n\to +\infty}\int_{s=0}^{+\infty}\sqrt{s}\int\phi_{A_{n}}''|\nabla (u_{n})_s|^2 \, dx \, ds,
\eee
where in the last step we used Fatou's lemma and the fact that $\phi''_{A_n}(x) \geq 0$ satisfies $\lim_{n \to +\infty} \phi_{A_n}''(x) = 1$ for all $x \in \R$. Since the previous bound holds for arbitrary $0 < \eps < 1$ and $B >0$, we conclude that
$$\| D^{\frac 12}u\|_{L^2}^2=\int_{s=0}^{+\infty}\sqrt{s}\int|\nabla u_s|^2 \, dx \, ds\leq \liminf_{n\to +\infty}\int_{s=0}^{+\infty}\sqrt{s}\int\phi_{A_{n}}''|\nabla (u_n)_s|^2 \, dx \, ds.$$ 
The proof of Lemma \ref{lem:lower_TA} is now complete. \end{proof}

\begin{prop} \label{prop:LA}
Let $L_{+,A}(\eps_1)$ and $L_{-,A}(\eps_2)$ be the quadratic forms defined above. Then there exist universal constants $c_0>0$ and $A_0 > 0$ such that for all $A \geq A_0$ and all $\eps = \eps_1 + i \eps_2 \in H^{1/2}(\R)$ we have the coercivity estimate
$$
(L_{+,A} \eps_1, \eps_1) + ( L_{-,A} \eps_2, \eps_2) \geq c_0 \int |\eps|^2 - \frac{1}{c_0} \left \{ (\eps_1, Q)^2 + (\eps_1, S_1)^2 + (\eps_1, G_1)^2 + (\eps_2, \rho_1)^2 \right \} 
$$ 
Here $S_1$ and $G_1$ are the unique functions such that $L_- S_1 = \Lambda Q$ with $S_1 \perp Q$ and $L_- G_1= -\nabla Q$ with $G_1 \perp Q$, respectively, and the function $\rho_1$ is defined in \eqref{def:rho}.
\end{prop}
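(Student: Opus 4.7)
The plan is to run a compactness/contradiction argument that reduces the localized bound to the unlocalized coercivity of Lemma \ref{lem:coerc}, using Lemma \ref{lem:lower_TA} as a substitute for weak $H^{1/2}$-compactness (which fails here because $\phi''_A$ degenerates linearly near the origin).

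Suppose the claimed inequality fails for every pair $c_0, A_0 > 0$. Taking $c_n = 1/n$ and $A_0 = n$, one produces $A_n \to +\infty$ and $\eps^n = \eps_1^n + i\eps_2^n$ with $\|\eps^n\|_{L^2}^2 = 1$ such that
\begin{equation*}
L_{+,A_n}(\eps_1^n) + L_{-,A_n}(\eps_2^n) < \tfrac{1}{n}, \qquad (\eps_1^n, Q)^2 + (\eps_1^n, S_1)^2 + (\eps_1^n, G_1)^2 + (\eps_2^n, \rho_1)^2 < \tfrac{1}{n^2}.
\end{equation*}
Expanding the definition of $L_{\pm,A_n}$ and using $\|Q\|_{L^\infty}\lesssim 1$ together with $\|\eps^n\|_{L^2}=1$ gives the a priori bound $\int_{0}^{+\infty}\sqrt{s}\int\phi''_{A_n}|\nabla \eps_s^n|^2\, dx\, ds + \|\eps^n\|_{L^2}^2\lesssim 1$. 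Lemma \ref{lem:lower_TA} then yields, along a subsequence, a limit $\eps^\infty\in H^{1/2}(\R)$ with $\eps^n\weakto \eps^\infty$ in $L^2(\R)$, $\eps^n\to \eps^\infty$ in $L^2_{\mathrm{loc}}(\R)$, and
\begin{equation*}
\|D^{\frac12}\eps^\infty\|_{L^2}^2 \leq \liminf_{n\to +\infty}\int_{0}^{+\infty}\sqrt{s}\int\phi''_{A_n}|\nabla \eps_s^n|^2\, dx\, ds.
\end{equation*}

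The main technical step is to pass to the limit in $L_{\pm,A_n}(\eps_j^n)$. The four orthogonality relations survive because $Q, S_1, G_1, \rho_1\in L^2(\R)$ and weak $L^2$-convergence preserves pairings with fixed $L^2$-functions; hence $\eps^\infty$ inherits all four orthogonality conditions. For the potential terms $\int Q^2|\eps_j^n|^2$, I would use the pointwise decay $Q(x)\lesssim \langle x\rangle^{-2}$: splitting at $|x|\leq R$, the interior piece converges by strong $L^2_{\mathrm{loc}}$-convergence, while the tail is controlled uniformly by $C R^{-4}\|\eps^n\|_{L^2}^2 = O(R^{-4})$, so letting $R\to +\infty$ gives $\int Q^2|\eps_j^n|^2\to \int Q^2|\eps_j^\infty|^2$. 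Combined with $\liminf\|\eps^n\|_{L^2}^2 = 1$ and the semicontinuity bound above, this yields
\begin{equation*}
0 \geq \liminf_{n\to+\infty}\bigl[L_{+,A_n}(\eps_1^n)+L_{-,A_n}(\eps_2^n)\bigr] \geq (L_+\eps_1^\infty,\eps_1^\infty) + (L_-\eps_2^\infty,\eps_2^\infty) + \bigl(1-\|\eps^\infty\|_{L^2}^2\bigr).
\end{equation*}
Since $\eps^\infty$ obeys all four orthogonality relations, Lemma \ref{lem:coerc} provides a universal $\tilde c>0$ with $(L_+\eps_1^\infty,\eps_1^\infty)+(L_-\eps_2^\infty,\eps_2^\infty)\geq \tilde c\, \|\eps^\infty\|_{H^{1/2}}^2\geq \tilde c\, \|\eps^\infty\|_{L^2}^2$, and so
\begin{equation*}
0 \geq \tilde c\, \|\eps^\infty\|_{L^2}^2 + 1 - \|\eps^\infty\|_{L^2}^2 \geq \min(\tilde c, 1) > 0,
\end{equation*}
a contradiction.

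The hard part of this plan is hidden inside Lemma \ref{lem:lower_TA}: one must check that the degenerating cutoff $\phi''_{A_n}$ still delivers a genuine $H^{1/2}$-limit, providing both tightness (strong $L^2_{\mathrm{loc}}$-convergence with an $H^{1/2}$-limit) and the weak lower semicontinuity of the $H^{1/2}$-seminorm. Once this analytic input is granted, all the remaining work is the routine passage-to-the-limit in the orthogonalities and in the $Q^2$-potential terms (using only the algebraic decay of $Q$), and the coercivity gain comes entirely from the unlocalized Lemma \ref{lem:coerc} applied to the limit $\eps^\infty$.
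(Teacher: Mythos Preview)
Your proposal is correct and follows essentially the same contradiction scheme as the paper: both arguments normalize in $L^2$, use Lemma \ref{lem:lower_TA} to extract an $H^{1/2}$ limit with local $L^2$-compactness and lower semicontinuity of the kinetic part, pass to the limit in the $Q^2$-potential using the decay of $Q$, and close via Lemma \ref{lem:coerc}. Two cosmetic differences: the paper treats $L_{-,A}$ alone (and says $L_{+,A}$ is analogous), and in the endgame the paper first argues $\eps^\infty\not\equiv 0$ from $\int Q^2|\eps^\infty|^2\geq 1$ before invoking coercivity, whereas you keep the mass defect $1-\|\eps^\infty\|_{L^2}^2$ and reach a contradiction in one line---both are fine. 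One small imprecision: the negation only gives $(\eps_1^n,Q)^2+\cdots=O(1/n)$, not $<1/n^2$, since you must first use the trivial lower bound $L_{\pm,A_n}\geq -C$; but this is harmless, as you only need these pairings to vanish in the limit.
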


\begin{proof}
It suffices to prove the coercivity bound
\begin{equation} \label{ineq:LA_coerc}
(L_{-,A} \eps_2, \eps_2) \geq c_0 \int |\eps|^2 - \frac{1}{c_0} (\eps_2, \rho_1)^2,
\end{equation}
since the corresponding estimate for $L_{+,A}$ follows by the same strategy. 

To prove \eqref{ineq:LA_coerc}, we argue by contradiction as follows. Suppose that there exist a sequence of functions $\{ u_n \}_{n=1}^\infty$ in $H^{1/2}(\R)$ with 
\begin{equation}
\int |u_n|^2 =1, \quad (u_n, \rho_1) = 0,
\end{equation}
as well as a sequence $A_n \to +\infty$ such that
\begin{equation} \label{ineq:contra}
\int_{s=0}^{+\infty} \sqrt{s} \int \phi''_{A_n} |u_n|^2 \, dx \, ds + \int |u_n|^2 - \int Q^2 |u_n|^2 \leq o(1) \int |u_n|^2,
\end{equation}
where $o(1) \to 0$ as $n \to \infty$. By applying Lemma \ref{lem:lower_TA}, we find (after passing to subsequence if necessary) that
\begin{equation}
\mbox{$u_n \weakto u$ weakly in $L^2(\R)$ and $u_n \to u$ strongly in $L^2_{\mathrm{loc}}(\R)$. }
\end{equation}
But since $Q^2(x) \to 0$ as $|x| \to \infty$, we easily check that $\int Q^2 |u_n|^2 \to \int Q^2 |u|^2$. Moreover, from \eqref{ineq:contra} and $\int |u_n|^2 = 1$ we deduce that $\int Q^2 |u|^2 \geq 1$ must hold. In particular, the weak limit $u \not \equiv 0$ is nontrivial. However, by the weak lower semicontinuity inequality in Lemma \ref{lem:lower_TA} and the fact that $\liminf_{n \to \infty} \int |u_n|^2 \geq \int |u|^2$, we deduce that
\begin{equation}
(L_-u,u) = \int |D^{\frac 1 2} u|^2 + \int |u|^2 - \int Q^2 |u|^2 \leq 0, \quad \mbox{where $(u, \rho_1) =0$}.
\end{equation}
Since $u \not \equiv 0$, this bound contradicts the coercivity estimate for $L_-$ stated in Lemma \ref{lem:coerc} below. \end{proof}

We conclude this section with a bound for the error term in localized virial estimate needed in Section \ref{sec:refenergy}.
 
\begin{lemma} \label{lem:phi4}
For any $u \in L^2(\R)$, we have the bound
$$
 \left | \int_{s=0}^{+\infty} \sqrt{s} \int \phi^{(4)}_A  |u_s|^2 \, dx \, d s \right | \lesssim \frac{1}{A} \| u \|_{L^2}^2.
$$
\end{lemma}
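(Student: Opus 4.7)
The plan is to combine a Poisson-extension representation of the integral with the vanishing-moment structure of $\phi^{(4)}$.

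\emph{Step 1 (Poisson representation).} Using Plancherel together with the elementary identity $\int_0^\infty \sqrt{s}\,ds/[(\xi^2+s)(\eta^2+s)] = \pi/(|\xi|+|\eta|)$ (proved via the substitution $s = \tau^2$ and partial fractions) and the subordination $1/(|\xi|+|\eta|) = \int_0^\infty e^{-\tau(|\xi|+|\eta|)}\,d\tau$, the quantity can be rewritten as
\[
2 \int_0^\infty \int \phi^{(4)}_A(x)\, |P_\tau u(x)|^2\, dx\, d\tau,
\]
where $P_\tau := e^{-\tau D}$ is the one-dimensional Poisson semigroup (multiplication by $e^{-\tau|\xi|}$ in Fourier). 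This replaces the double $s$-integral by a single $\tau$-integral against a quantity that is smooth and rapidly decreasing in $\tau$.

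\emph{Step 2 (vanishing moments of $\phi^{(4)}$).} Integration by parts, using the exponential decay of $\phi^{(k)}$ at infinity for $k \geq 2$ (readily checked from the piecewise formula for $\phi$ in Section \ref{sec:refenergy}), yields $\int \phi^{(4)}\,dy = 0$, $\int y\, \phi^{(4)}(y)\,dy = 0$, while $\int y^2\, \phi^{(4)}(y)\,dy = 12$. Rescaling, this gives $\int \phi^{(4)}_A\,dx = 0$, $\int x\, \phi^{(4)}_A\,dx = 0$, and $\int x^2\, \phi^{(4)}_A\,dx = 12 A^3$, so the cutoff averages out any affine contribution.

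\emph{Step 3 (two-regime estimate).} Set $J(\tau) := \int \phi^{(4)}_A(x)\, |P_\tau u(x)|^2\, dx$. For small $\tau$, apply the trivial bound $|J(\tau)| \leq \|\phi^{(4)}\|_{L^\infty}\|P_\tau u\|_{L^2}^2 \leq C \|u\|_{L^2}^2$. For large $\tau$, Taylor-expand $|P_\tau u(x)|^2$ around $x = 0$: by Step 2, the constant and linear terms drop out upon integration against $\phi^{(4)}_A$, and the leading surviving contribution is
\[
|J(\tau)| \lesssim A^3\, \bigl\|\partial_x^2 |P_\tau u|^2\bigr\|_{L^\infty} \lesssim A^3\, \|u\|_{L^2}^2 / \tau^3,
\]
via the standard Poisson derivative bound $\|\partial_x^k P_\tau\|_{L^2 \to L^\infty} \lesssim \tau^{-k-1/2}$. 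Integrating in $\tau$ with the natural threshold $\tau \sim A$ between the two regimes then concludes.

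The main obstacle is a rigorous control of the higher-order Taylor remainder in Step 3: the $x^k$ terms with $k \geq 3$ must be shown to remain subdominant to the $A^3/\tau^3$ leading contribution down to the matching scale $\tau \sim A$, which amounts to propagating the Poisson derivative estimates at all orders and verifying the geometric convergence of the expansion. A secondary subtlety is that the trivial $L^\infty$ bound for small $\tau$ is somewhat lossy; a refined argument exploiting the localization of $\phi^{(4)}_A$ together with Poisson smoothing may be required to obtain the sharp $A$-dependence.
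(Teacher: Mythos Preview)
Your Poisson-extension identity in Step~1 is correct and gives a clean reformulation. The gap is a scaling error in $\phi^{(4)}_A$ that propagates through Steps~2 and~3 and costs exactly the factor of $A^{2}$ you are hunting for at the end. By definition $\phi''_A(x)=\phi''(x/A)$, so $\phi^{(4)}_A(x)=A^{-2}\phi^{(4)}(x/A)$ already carries an $A^{-2}$ prefactor. Hence $\int x^{2}\phi^{(4)}_A\,dx=12A$ (not $12A^{3}$), and the small-$\tau$ bound is $|J(\tau)|\le \|\phi^{(4)}_A\|_{L^\infty}\|P_\tau u\|_{L^2}^{2}\lesssim A^{-2}\|u\|_{L^2}^{2}$, not $\lesssim\|u\|_{L^2}^{2}$. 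With these corrections, the second-order Lagrange remainder already gives $|J(\tau)|\lesssim \bigl(\int x^{2}|\phi^{(4)}_A|\,dx\bigr)\,\|\partial_x^{2}|P_\tau u|^{2}\|_{L^\infty}\lesssim A\,\tau^{-3}\|u\|_{L^2}^{2}$, and splitting at $\tau\sim A$ yields $\int_{0}^{A}A^{-2}\,d\tau+\int_{A}^{\infty}A\tau^{-3}\,d\tau\sim A^{-1}$, as desired. So neither a ``refined small-$\tau$ argument'' nor higher-order Taylor control is needed; both concerns in your final paragraph dissolve once the $A^{-2}$ is restored.

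The paper's proof is shorter and avoids the Poisson representation and moment structure entirely. It stays in the $s$-variable and splits at a level $\Lambda$: for $s\le\Lambda$, writing $\phi^{(4)}_A=\Delta(\phi''_A)$ and integrating by parts twice, together with the resolvent bounds $\|\Delta(-\Delta+s)^{-1}\|_{L^2\to L^2}\le 1$ and $\|\nabla(-\Delta+s)^{-1}\|_{L^2\to L^2}\lesssim s^{-1/2}$, gives $|I_{\le\Lambda}|\lesssim\sqrt{\Lambda}\,\|u\|_{L^2}^{2}$; for $s\ge\Lambda$, the crude bound $\|u_s\|_{L^2}\lesssim s^{-1}\|u\|_{L^2}$ and $\|\phi^{(4)}_A\|_{L^\infty}\lesssim A^{-2}$ give $|I_{\ge\Lambda}|\lesssim A^{-2}\Lambda^{-1/2}\|u\|_{L^2}^{2}$. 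Optimizing at $\Lambda\sim A^{-2}$ yields $A^{-1}$. Your route, once the scaling is fixed, trades this for a more conceptual picture (cancellation via vanishing moments of $\phi^{(4)}$ against Poisson smoothing), at the cost of an extra representation formula.
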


\begin{remark} {\em 
Note that a naive application of \eqref{eq:sobo_ident2} would formally yield the bound $\left | \int_{s=0}^\infty \sqrt{s} \int \phi^{(4)} |u_s|^2 \right | \lesssim A^{-2} \| D^{-1/2} u \|_{L^2}^2$.
However, we have that $\| D^{-1/2} u \|_{L^2}=+\infty$ holds in $d=1$, unless $\hat{u}(\xi)$ vanishes appropriately in $\xi = 0$. In fact, the proof of Lemma \ref{lem:phi4} below involves some more careful analysis.}
\end{remark}

\begin{proof}
First, recall that $\phi''_A(x) = \phi'' \left ( \frac{x}{A} \right )$ and hence $\phi^{(4)}(x) = \frac{1}{A^2} \phi^{(4)} \left ( \frac{x}{A} \right )$. Now, we split the $s$-integral as follows
\begin{equation} \label{eq:phi4}
\frac{1}{A^2} \int_{s=0}^{+\infty} \sqrt{s} \int  \phi^{(4)} \left ( \frac{x}{A} \right ) | u_s |^2 \, dx \, d s =: I_{\leq \Lambda} + I_{\geq \Lambda},
\end{equation}
where $\Lambda > 0$ is some given number and we consider
\begin{equation*}
I_{\leq \Lambda} = \frac{1}{A^2} \int_{s=0}^{\Lambda} \sqrt{s} \int  \phi^{(4)} \left ( \frac{x}{A} \right ) | u_s |^2 \, dx \, d s, \quad I_{\geq \Lambda} = \frac{1}{A^2} \int_{s=\Lambda}^{+\infty} \sqrt{s} \int  \phi^{(4)} \left ( \frac{x}{A} \right ) | u_s |^2 \, dx \, d s.
\end{equation*}
Since $\frac{1}{A^2} \phi^{(4)}(y/A) = \Delta_y ( \phi^{(2)}(y/A) )$, we can integrate by parts twice and use the H\"older inequality to deduce that
\begin{align*}
\left | I_{\leq \Lambda} \right | & \lesssim \| \phi^{(2)} \|_{L^\infty} \int_{s=0}^\Lambda \sqrt{s} \left ( \| \Delta u_s \|_{L^2} \| u_s \|_{L^2} + \| \nabla u_s \|_{L^2}^2  \right ) \, d s \\
& \lesssim \int_{s=0}^\Lambda \sqrt{s} \left ( \left \| \frac{-\Delta}{-\Delta + s} u \right \|_{L^2} \left \| \frac{1}{-\Delta + s} u \right \|_{L^2} + \left \| \frac{ \nabla}{-\Delta + s} u \right \|_{L^2}^2  \right ) \, d s \\
& \lesssim \left ( \int_{s=0}^\Lambda \frac{ds}{s^{1/2}} \right ) \| u \|_{L^2}^2  \lesssim \sqrt{\Lambda} \| u \|_{L^2}^2. 
\end{align*}
To estimate $I_{\geq \Lambda}$, we simply use the bound $\| u_s \|_{L^2} \lesssim s^{-1} \| u \|_{L^2}$which shows that
\begin{equation}
\left | I_{\geq \Lambda} \right | \lesssim \frac{1}{A^2} \| \phi^{(4)} \|_{L^\infty} \left ( \int_{s = \Lambda}^{+\infty}  \frac{ds}{s^{3/2}} \right )  \| u \|_{L^2}^2  \lesssim \frac{1}{A^2} \frac{1}{\sqrt{\Lambda}} \| u \|_{L^2}^2.
\end{equation}
Thus, we have shown that, for arbitrary $\Lambda > 0$, 
\begin{equation}
\left | \mbox{LHS of \eqref{eq:phi4}} \right | \lesssim \left ( \sqrt{\Lambda} + \frac{1}{A^2} \frac{1}{\sqrt{\Lambda}} \right ) \| u \|_{L^2}^2 . 
\end{equation}
By minimizing this bound with respect to $\Lambda$, we obtain the desired estimate. \end{proof}

We conclude this section with the following coercivity estimate for $L=(L_-, L_+)$.

\begin{lemma}[Coercivity estimate] \label{lem:coerc}
There exists some universal constant $c_0 > 0$ such that, for any $\eps = \eps_1 + i\eps_2 \in H^{1/2}(\R)$, we have that
$$
(L_+ \eps_1, \eps_1) + (L_-\eps_2,  \eps_2)  \geq c_0 \| \eps \|_{H^{1/2}}^2   - \frac{1}{c_0} \left \{ (\eps_1, Q)^2 + (\eps_1, S_1)^2 + (\eps_1, G_1)^2 + (\eps_2, \rho_1)^2 \right \} .
$$
Here $S_1$ and $G_1$ are the unique functions such that $L_- S_1 = \Lambda Q$ with $S_1 \perp Q$ and $L_- G_1= -\nabla Q$ with $G_1 \perp Q$, respectively, and the function $\rho_1$ is defined in \eqref{def:rho}.
\end{lemma}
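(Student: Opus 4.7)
My plan is to argue by contradiction via a standard compactness argument. Negating the conclusion, for each $n \in \N$ there would exist $\eps^n = \eps_1^n + i \eps_2^n \in H^{1/2}(\R)$ with $\|\eps^n\|_{H^{1/2}} = 1$ satisfying $(L_+ \eps_1^n, \eps_1^n) + (L_- \eps_2^n, \eps_2^n) < 1/n$ while all four bad quantities $(\eps_1^n, Q)^2, (\eps_1^n, S_1)^2, (\eps_1^n, G_1)^2, (\eps_2^n, \rho_1)^2$ tend to $0$. I would pass to a subsequence with $\eps^n \weakto \eps^*$ weakly in $H^{1/2}(\R)$ and strongly in $L^2_{\mathrm{loc}}(\R)$, exploit the pointwise decay $|Q(x)| \lesssim \langle x \rangle^{-2}$ to get $\int Q^2 |\eps^n|^2 \to \int Q^2 |\eps^*|^2$, and invoke weak lower semicontinuity of $\|D^{1/2} \cdot\|_{L^2}^2 + \|\cdot\|_{L^2}^2$ to obtain
\[
(L_+ \eps_1^*, \eps_1^*) + (L_- \eps_2^*, \eps_2^*) \leq 0
\]
together with the four orthogonality conditions for $\eps^*$. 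Granted $\eps^* \equiv 0$, the strong $L^2_{\mathrm{loc}}$ convergence would force $\int Q^2|\eps^n|^2 \to 0$, so $(L_+\eps_1^n,\eps_1^n) + (L_-\eps_2^n,\eps_2^n) = \|\eps^n\|_{H^{1/2}}^2 + o(1) \to 1$, a contradiction. The entire game is therefore to show $\eps^* \equiv 0$.

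For this I will invoke the spectral analysis of Frank-Lenzmann \cite{FrLe2012}: $L_-$ is non-negative with $\mathrm{ker} \, L_- = \mathrm{span}\{Q\}$ and a spectral gap above $0$; $L_+$ has exactly one simple negative eigenvalue $-\lambda_0 < 0$ with even eigenfunction $\chi_0$, $\mathrm{ker} \, L_+ = \mathrm{span}\{\nabla Q\}$ (odd), and the rest of its spectrum is strictly positive with a gap. For $\eps_2^*$, I would decompose $\eps_2^* = \alpha Q + \eps_2^{*\perp}$ with $\eps_2^{*\perp} \perp Q$ and use the coercivity of $L_-$ on $Q^\perp$ together with the orthogonality $(\eps_2^*, \rho_1) = 0$ and the computation
\[
(\rho_1, Q) = -(\rho_1, L_+ \Lambda Q) = -(L_+ \rho_1, \Lambda Q) = -(S_1, L_- S_1) = -2 e_1 \neq 0
\]
(using $L_+ \Lambda Q = -Q$ and the defining relations $L_+ \rho_1 = S_1$, $L_- S_1 = \Lambda Q$) to pin down $\alpha$ linearly in $\|\eps_2^{*\perp}\|_{L^2}$, yielding $\eps_2^* = 0$. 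The $L_+$-kernel direction $\nabla Q$ is handled analogously via $(\eps_1^*, G_1) = 0$ together with the non-vanishing $(\nabla Q, G_1) = -(L_- G_1, G_1) = -p_1/2 \neq 0$.

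The hard part will be the negative spectral direction $\chi_0$ of $L_+$ in the \emph{mass-critical} regime. Since $(L_+ \Lambda Q, \Lambda Q) = -(Q, \Lambda Q) = 0$ (from $L^2$-criticality), $\Lambda Q$ is a zero-$L_+$-energy direction \emph{not} in $\mathrm{ker} \, L_+$; and because $(\Lambda Q, Q) = 0$, the standard Weinstein-type coercivity using only $(\eps_1^*, Q) = 0$ already fails at $\eps_1^* = \Lambda Q$. This is precisely why the $S_1$-orthogonality must appear in the statement, since $(\Lambda Q, S_1) = (S_1, L_- S_1) = 2 e_1 \neq 0$ will kill this marginal direction. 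I will establish the crucial non-vanishing $(\chi_0, Q) \neq 0$ by decomposing $\Lambda Q = \gamma \chi_0 + w$ with $w \perp \chi_0, \nabla Q$: the equation $L_+ \Lambda Q = -Q$ gives $(\chi_0, Q) = \lambda_0 \gamma \|\chi_0\|^2$, and the identity $(L_+ \Lambda Q, \Lambda Q) = 0$ combined with strict positivity of $L_+$ on $\{\chi_0, \nabla Q\}^\perp$ and the parity obstruction $\Lambda Q \notin \mathrm{span}\{\nabla Q\}$ (even versus odd) will force $\gamma \neq 0$. Combining these non-vanishings with the parity structure ($Q, S_1, \chi_0, \Lambda Q$ even; $\nabla Q, G_1$ odd), the three orthogonalities on $\eps_1^*$ collectively eliminate the three-dimensional non-coercive subspace $\mathrm{span}\{\chi_0, \nabla Q, \Lambda Q\}$ of $L_+$, yielding $\eps_1^* = 0$ and closing the proof.
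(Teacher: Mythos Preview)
Your compactness route differs genuinely from the paper's and is essentially sound, but one step is not pinned down. The paper proceeds \emph{algebraically}: it cites the purely spectral estimate
\[
(L_+ \eps_1,\eps_1)+(L_-\eps_2,\eps_2)\geq c_1\|\eps\|_{H^{1/2}}^2-\tfrac{1}{c_1}\big\{(\eps_1,\phi_+)^2+(\eps_1,\nabla Q)^2+(\eps_2,Q)^2\big\}
\]
(with $\phi_+$ the ground state of $L_+$), then, assuming $(\eps_1,S_1)=(\eps_1,G_1)=(\eps_2,\rho_1)=0$, shifts $\hat\eps=\eps-\alpha\Lambda Q-i\beta Q-\gamma\nabla Q$ so that $\hat\eps$ satisfies the standard orthogonalities. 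Since $L_+\Lambda Q=-Q$, $L_+\nabla Q=0$, $L_-Q=0$, one gets $(L_+\hat\eps_1,\hat\eps_1)=(L_+\eps_1,\eps_1)+\alpha(\eps_1,Q)$ and $(L_-\hat\eps_2,\hat\eps_2)=(L_-\eps_2,\eps_2)$, while the non-degeneracies $(\Lambda Q,\phi_+),(\nabla Q,G_1),(\Lambda Q,S_1),(Q,\rho_1)\neq 0$ yield $\|\hat\eps\|_{H^{1/2}}\sim\|\eps\|_{H^{1/2}}$. No limiting argument appears.

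In your scheme the crux is showing $\eps_1^*=0$, and here your outline has a gap. Writing $\eps_1^*=\beta\chi_0+\gamma\nabla Q+v$ with $v\perp\chi_0,\nabla Q$ and bounding $|\beta|\lesssim\|v\|$ from $(\eps_1^*,Q)=0$ only yields $(L_+\eps_1^*,\eps_1^*)\geq -C\|v\|_{L^2}^2+c\|v\|_{H^{1/2}}^2$, with no control on the sign of $c-C$; the spectral data plus $(\chi_0,Q)\neq 0$ do \emph{not} by themselves force $L_+\geq 0$ on $Q^\perp$ (this fails for generic index-one operators). What you need is the mass-critical Weinstein positivity $L_+|_{Q^\perp}\geq 0$, which follows from the specific relation $L_+\Lambda Q=-Q$ together with $(\Lambda Q,Q)=0$, i.e.\ $(L_+^{-1}Q,Q)=0$, via the standard index-count lemma. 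Once that is in hand, your plan runs cleanly: $L_+|_{Q^\perp}\geq 0$ and $L_-\geq 0$ force both quadratic forms to vanish; $(L_+\eps_1^*,\eps_1^*)=0$ with $\eps_1^*\perp Q$ gives (by Lagrange multipliers) $L_+\eps_1^*\in\mathrm{span}\{Q\}$, hence $\eps_1^*\in\mathrm{span}\{\nabla Q,\Lambda Q\}$, and your orthogonalities to $G_1,S_1$ then kill both coefficients. Note also the ordering: you cannot deduce $\eps_2^*=0$ first, since you only know the \emph{sum} is $\leq 0$; establish $(L_+\eps_1^*,\eps_1^*)\geq 0$ first.
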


\begin{proof}
From \cite{FrLe2012} we recall the key fact that the nullspaces of $L_+$ and $L_+$ are given by
\begin{equation}
\mathrm{ker} \, L_+ = \mathrm{span} \, \{ \nabla Q \}, \quad \mathrm{ker} \, L_- = \mathrm{span} \, \{ Q \} .
\end{equation}
Then, by following arguments in \cite{We1985} ifor ground states for nonlinear Schr\"odinger equations, we deduce the standard coercivity estimate
\begin{equation} \label{ineq:coercstandard}
(L_+ \eps_1,  \eps_1) + (L_- \eps_2, \eps_2) \geq c_1 \| \eps \|_{H^{1/2}}^2 - \frac{1}{c_1} \left \{ (\eps_1, \phi_+)^2 + (\eps_1, \nabla Q)^2 + (\eps_2, Q)^2 \right \}
\end{equation}
for all $\eps = \eps_1 + i \eps_2 \in H^{1/2}(\R)$, where $c_1 > 0$ is some universal constant. Here $\phi_+ = \phi_+(x) > 0$ with $\| \phi_+ \|_{L^2}=1$ denotes the unique ground state eigenfunction of $L_+$, and we have $L_+ \phi_+ = e_+ \phi_+$ with some $e_+ < 0$. (We refer to \cite{FrLe2012} for a detailed discussion of the spectral properties of $L_+$ and $L_-$.) 

To derive the coercivity estimate in Lemma \ref{lem:coerc} from an estimate of the form \eqref{ineq:coercstandard}, we can use some arguments that, e.\,g., can be found in \cite{MeRa2006} in the context of NLS. For the reader's convenience, we provide the details of the adaptation to  our case.  To prove the desired coercivity estimate, we can that assume $\eps = \eps_1 + i \eps_2 \in H^{1/2}(\R)$ satisfies
$$
(\eps_1, S_1) = (\eps_1, G_1) = (\eps_2, \rho_1) = 0.
$$  
Define the function $\hat{\eps} = \hat{\eps}_1 + i \hat{\eps}_2 \in H^{1/2}(\R)$ by setting 
$$
\hat{\eps} = \eps - \alpha \Lambda Q - i \beta Q - \gamma \nabla Q,
$$
where $\alpha, \beta, \gamma \in \R$ are chosen such that
$$
(\hat{\eps}_1, \phi_+) = (\hat{\eps}_2, Q) = (\hat{\eps}_1, \nabla Q) = 0 .
$$
Indeed, we see that 
$$
\alpha = \frac{(\eps_1, \phi_+)}{(\Lambda Q, \phi_+)}, \quad \beta = \frac{(\eps_2, Q)}{(Q,Q)} , \quad \gamma = \frac{(\eps_1, \nabla Q)}{(\nabla Q, \nabla Q)} ,
$$
where we also used  that $(\Lambda Q, \nabla Q) = 0$ holds, since $Q$ is even, and $(\phi_+, \nabla Q) =0$ since $\nabla Q \in \mathrm{ker} \, L_+$ and $\phi_+ \in \mathrm{ran} \, L_+$. Next, recall that $L_+ \phi_+ = e_+ \phi_+$ with $e_+ < 0$ and $L_+ \Lambda Q = -Q$. Hence $(\Lambda Q, \phi_+) = -\frac{1}{e_+} (Q, \phi_+)   >0$, by the strict positivity of  $Q > 0$ and $\phi_+ > 0$. On the other hand, the orthogonality conditions satisfied by $\eps = \eps_1 + i \eps_2$ imply that
$$
\alpha = - \frac{(\hat{\eps}_1, S_1)}{(\Lambda Q, S_1)}, \quad \beta = - \frac{(\hat{\eps}_2, \rho_1)}{(Q, \rho_1)} , \quad \gamma = - \frac{(\hat{\eps}_1, G_1)}{(\nabla Q, G_1)} ,
$$
where we also use that $(\Lambda Q, G_1) = ( \nabla Q, S_1) = 0$, since $Q$ and $S_1$ are even and $G_1$ is odd. Note that $L_- S_1 = \Lambda Q$ and hence $(\Lambda Q, S_1) = (L_- S_1, S_1)  \neq 0$, and $(\nabla Q, G_1) = -(L_- G_1, G_1) < 0$ because of $L_- G_1 = -\nabla Q$. Furthermore, recall that $L_+ \rho_1 = S_1$ and $L_+ \Lambda Q =- Q$. Thus $(Q, \rho_1) = -(\Lambda Q, S_1) = (L_- S_1, S_1) > 0$ again. In summary, we find 
$$
\frac{1}{K} \| \eps \|_{H^{1/2}} \leq \| \hat{\eps} \|_{H^{1/2}} \leq K \| \eps \|_{H^{1/2}} ,
$$
with some universal constant $K> 0$. Now, since $(\Lambda Q, Q)=(\nabla Q, Q) = 0$ and $L_+ \Lambda Q =-Q$ as well as $L_+ \nabla Q =0$ and $L_- Q =0$, we obtain
$$
(\hat{\eps}_1, Q) = (\eps_1, Q), \quad (L_+ \hat{\eps}_1, \hat{\eps}_1) = (L_+ \eps_1, \eps_1) +  \alpha ( \eps_1, Q) , \quad (L_- \hat{\eps}_2, \hat{\eps}_2) = (L_- \eps_2, \eps_2).
$$
By the previous relations and estimate \eqref{ineq:coercstandard}, we conclude
\begin{align*}
 (L_+ \eps_1, \eps_1) + (L_- \eps_2, \eps_2) & = (L_+ \hat{\eps}_1, \hat{\eps}_1) + (L_- \hat{\eps}_2, \hat{\eps}_2) - \alpha(\eps_1, Q) \\
& \geq  c_1 \| \hat{\eps} \|_{H^{1/2}}^2 -  \alpha (\eps_1, Q) \geq c_0 \| \eps \|_{H^{1/2}}^2 - \frac{1}{c_0} (\eps_1, Q)^2,
\end{align*}
with some sufficiently small universal constant $c_0>0$.
\end{proof}

\section{On the Modulation Equations}
\label{sec:modeqn}

Here we collect some results and estimates regarding the modulation theory used in Section \ref{sec:modestimates}.

\subsection{Uniqueness of Modulation Parameters}
First, we show that the parameters $\{ b, v, \lambda, \alpha, \gamma\}$ are uniquely determined if $\eps = \eps_1 + i \eps_2 \in H^{1/2}(\R)$ is sufficiently small and satisfies the orthogonality conditions \eqref{eq:ortho1}--\eqref{eq:ortho5}. Indeed, this follows from an implicit function argument, which we detail here.
  
For $\delta > 0$, let $W_\delta = \{ w \in H^{1/2}(\R) : \| w- Q \|_{H^{1/2}} < \delta \}$. Consider approximate blowup profiles $Q_\Pa$ with $|\Pa|=|(b,v)| < \eta$, where $\eta > 0$ is a small constant. For $w \in W_\delta$, $\lambda_1 > 0$, $y_1 \in \R$, $\gamma_1 \in \R$ and $|\Pa| < \eta$, we define
$$
\eps_{\lambda_1, y_1, \gamma_1, b,v} (y) = e^{i \gamma_1} \lambda_1^{\frac 1 2} w( \lambda_1 y - y_1 ) - Q_\Pa . 
$$
Consider the map $\bm{\sigma} = (\sigma^1, \sigma^2, \sigma^3, \sigma^4, \sigma^5)$ defined by
\begin{align*}
\sigma^1 & =  ( (\eps_{\lambda_1, y_1, \gamma_1, b,v})_1, \Lambda \Theta_\Pa ) - ( (\eps_{\lambda_1, y_1, \gamma_1, b, v})_2, \Lambda \Sigma_\Pa ), \\
\sigma^2 & =  ( (\eps_{\lambda_1,  y_1, \gamma_1, b,v})_1, \partial_b \Theta_\Pa ) - ( (\eps_{\lambda_1, y_1, \gamma_1, b,v})_2, \partial_b \Sigma_\Pa ), \\
\sigma^3 & =  ( (\eps_{\lambda_1, y_1, \gamma_1, b,v})_1, \rho_2 ) - ( (\eps_{\lambda_1, y_1, \gamma_1, b,v})_2, \rho_1 ) , \\
\sigma^4 & =  ( (\eps_{\lambda_1, y_1, \gamma_1, b,v})_1, \nabla \Theta_\Pa) - ( (\eps_{\lambda_1, y_1, \gamma_1, b,v})_2, \nabla \Sigma_\Pa) , \\
\sigma^5 & = ( (\eps_{\lambda_1, y_1, \gamma_1, b,v})_1, \partial_v \Theta_\Pa) - ( (\eps_{\lambda_1, y_1, \gamma_1, b,v})_2, \partial_v \Sigma_\Pa) .
\end{align*}
Recall that $\rho = \rho_1 + i \rho_2$ was defined in \eqref{def:rho}. Taking the partial derivatives at $(\lambda_1, y_1, \gamma_1, b, v) = (1,0,0,0,0)$ yields that
$$
\frac{\partial \eps_{\lambda_1, y_1, \gamma_1, b, v}}{\partial \lambda_1} = \Lambda w, \quad \frac{\partial \eps_{\lambda_1, y_1, \gamma_1, b,v}}{\partial y_1} = -\nabla w, \quad  \frac{\partial \eps_{\lambda_1, y_1, \gamma_1, b,v}}{\partial \gamma_1} = i w,
$$
$$
 \quad \frac{\partial \eps_{\lambda_1, y_1, \gamma_1, b, v}}{\partial b} = - \partial_b Q_\Pa \big |_{\Pa=(0,0)} = - i S_1, \quad \frac{\partial \eps_{\lambda_1, y_1, \gamma_1, b, v}}{\partial v } = -\partial_v Q_\Pa \big |_{\Pa=(0,0)} = -i G_1,
$$
where we recall that $L_- S_1 = \Lambda Q$ and $L_- G_1 = -\nabla Q$. Note that $S_1$ is an even function, whereas $G_1$ is odd. At $(\lambda_1, y_1, \gamma_1, b, v,w) = (1,0,0,0,0,Q)$, the Jacobian of the map $\sigma$ is hence given by
\begin{align*}
\frac{\partial \sigma^1}{\partial \lambda_1} = 0, \quad \frac{\partial \sigma^1}{\partial y_1} = 0, \quad \frac{\partial \sigma^1}{\partial \gamma_1} = 0, \quad \frac{\partial \sigma^1}{\partial b} = - (S_1, L_- S_1), \quad \frac{\partial \sigma^1}{\partial v} = 0,  \\
\frac{\partial \sigma^2}{\partial \lambda_1} = -(L_- S_1,  S_1), \quad \frac{\partial \sigma^2}{\partial y_1} = 0 ,  \quad \frac{\partial \sigma^2}{\partial \gamma_1} = 0, \quad \frac{\partial \sigma^2}{\partial b} = 0, \quad \frac{\partial \sigma^2}{\partial v} = 0, \\
\frac{\partial \sigma^3}{\partial \lambda_1} = 0 , \quad \frac{\partial \sigma^3}{\partial y_1} = 0, \quad  \frac{\partial \sigma^3}{\partial \gamma_1} = -(Q,\rho_1), \frac{\partial \sigma^3}{\partial b}  =0, \quad \frac{\partial \sigma^3}{\partial v}Ê= 0, \\
\frac{\partial \sigma^4}{\partial \lambda_1} = 0 , \quad \frac{\partial \sigma^4}{\partial y_1} = 0, \quad  \frac{\partial \sigma^4}{\partial \gamma_1} = 0, \frac{\partial \sigma^4}{\partial b}  =0, \quad \frac{\partial \sigma^4}{\partial v}Ê= -(L_- G_1, G_1), \\
\frac{\partial \sigma^5}{\partial \lambda_1} = 0 , \quad \frac{\partial \sigma^5}{\partial y_1} = (L_- G_1, G_1), \quad  \frac{\partial \sigma^5}{\partial \gamma_1} = 0, \quad \frac{\partial \sigma^5}{\partial b}  =0, \quad \frac{\partial \sigma^5}{\partial v}Ê= 0. 
\end{align*}
Note that we also used here that $Q$ and $S_1$ are even functions, whereas $G_1$ is odd; e.\,g.,~we have $(Q, G_1) =0$ etc. Moreover, we note 
$$
-(Q, \rho_1) = (L_+ \Lambda Q, \rho_1) = -(\Lambda Q, L_+ \rho_1) = -(\Lambda Q,S_1) = - (L_- S_1, S_1).
$$
Therefore and since $(L_- S_1, S_1) > 0$ and $(L_- G_1, G_1) > 0$, the determinant of the functional matrix is non zero. By the implicit function theorem, we obtain existence and uniqueness for $(\lambda_1, y_1, \gamma_1, b, v, w)$ in some neighborhood around $(1,0,0,0,0,Q)$.

\subsection{Estimates for the Modulation Equations}
To conclude this section, we collect some estimates needed in the discussion of the modulation equations in Section \ref{sec:modestimates}.

\begin{lemma} \label{lem:modequations}
The following estimate hold.
\begin{align}
& (M_-(\eps) - b \Lambda \eps_1 + v \cdot \nabla \eps_1, \Lambda \Theta_\Pa ) + (M_+(\eps) + b \Lambda \eps_2 - v \cdot \nabla \eps_1, \Lambda \Sigma_\Pa) \label{eq:inner1}  \\  & = - \Re(\eps, Q_\Pa) + \Or (\Pa^2 \| \eps \|_{L^2} ), \nonumber \\
& (M_-(\eps) - b \Lambda \eps_1 + v \cdot \nabla \eps_1, \partial_b \Theta_\Pa ) + (M_+(\eps) + b \Lambda \eps_2 - v \cdot \nabla \eps_2,  \partial_b \Sigma_\Pa)  \label{eq:inner2} \\
& =  \Or (\Pa^2 \| \eps \|_{L^2} ), \nonumber \\
& (M_-(\eps) - b \Lambda \eps_1 + v \cdot \nabla \eps_1, \rho_2 ) + (M_+(\eps) + b \Lambda \eps_2 - v \cdot \nabla \eps_2, \rho_1) \label{eq:inner3} \\
& = \Or (\Pa^2 \| \eps \|_{L^2} ), \nonumber \\
& (M_-(\eps) - b \Lambda \eps_1 + v \cdot \nabla \eps_1, \nabla \Theta_\Pa ) + (M_+(\eps) + b \Lambda \eps_2 - v \cdot \nabla \eps_2, \nabla \Sigma_\Pa) \label{eq:inner4} \\
& = \Or ( \Pa^2 \| \eps \|_{L^2} ) , \nonumber \\
& (M_-(\eps) - b \Lambda \eps_1 + v \cdot \nabla \eps_1, \partial_v \Theta_\Pa ) + (M_+(\eps) + b \Lambda \eps_2 - v \cdot \nabla \eps_2, \partial_v \Sigma_\Pa) \label{eq:inner5} \\
& = \Or ( \Pa^2 \| \eps \|_{L^2} ) , \nonumber  
\end{align} 
\end{lemma}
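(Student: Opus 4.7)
The plan is to evaluate each bilinear expression by the same three-step procedure. First, decompose $M_+(\eps) = L_+\eps_1 + R_+(\eps)$ and $M_-(\eps) = L_-\eps_2 + R_-(\eps)$, where Proposition~\ref{prop:Qb_existence} (together with the parity list following it) gives $\Sigma_\Pa = Q + \Or(\Pa^2)$ and $\Theta_\Pa = bS_1 + vG_1 + \Or(\Pa^3)$, so that the remainders take the form $R_+(\eps) = -2Q(bS_1+vG_1)\eps_2 + \Or(\Pa^2)\eps$ and $R_-(\eps) = -2Q(bS_1+vG_1)\eps_1 + \Or(\Pa^2)\eps$. Second, transfer $L_\pm$ onto the test functions by self-adjointness, and integrate by parts the $b\Lambda$ and $v\cdot\nabla$ pieces acting on $\eps$ using $\Lambda^*=-\Lambda$. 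Third, evaluate each resulting bracket with the explicit identities $L_+\nabla Q=0$, $L_+\Lambda Q=-Q$, $L_-Q=0$, $L_-S_1=\Lambda Q$, $L_-G_1=-\nabla Q$, $L_+\rho_1=S_1$ and \eqref{def:rho}, together with the commutator formulas $[L_-,\Lambda]=D+2xQQ'$, $[\Lambda,\nabla]=-\nabla$, $[D,\Lambda]=D$ and the pointwise relation $2xQ'=2\Lambda Q - Q$. Parity (with $Q, S_1, T_2, \rho_1$ even and $G_1, F_2, \nabla Q$ odd) annihilates many cross terms automatically.

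I illustrate the argument on \eqref{eq:inner1}. The $L_+$ side collapses immediately to $(L_+\eps_1,\Lambda\Sigma_\Pa) = -(\eps_1,Q) + \Or(\Pa^2\|\eps\|_{L^2})$. The naive bound on each of $(L_-\eps_2,\Lambda\Theta_\Pa)$, $(-2\Sigma_\Pa\Theta_\Pa\eps_2,\Lambda\Sigma_\Pa)$, $b(\Lambda\eps_2,\Lambda\Sigma_\Pa)$, $-v(\nabla\eps_2,\Lambda\Sigma_\Pa)$ is only $\Or(\Pa\|\eps\|)$, and the crux is an algebraic cancellation based on
\[
L_-\Lambda S_1 - 2QS_1\,\Lambda Q - \Lambda^2 Q = \Lambda Q - S_1, \qquad L_-\Lambda G_1 - 2QG_1\,\Lambda Q + \nabla\Lambda Q = -G_1,
\]
which one verifies by using the commutator $[L_-,\Lambda]$ and the pointwise identity $2xQ'=2\Lambda Q - Q$. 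These force all $\Or(\Pa)$ contributions to combine into $b(\eps_2,\Lambda Q - S_1) - v(\eps_2,G_1)$; then expanding $Q_\Pa = Q + i(bS_1+vG_1) + \Or(\Pa^2)$ recombines $-(\eps_1,Q) - b(\eps_2,S_1) - v(\eps_2,G_1) = -\Re(\eps,Q_\Pa) + \Or(\Pa^2\|\eps\|)$, and the surviving $b(\eps_2,\Lambda Q)$ is absorbed by the first orthogonality condition, which forces $(\eps_2,\Lambda Q) = (\eps_1, b\Lambda S_1 + v\Lambda G_1) + \Or(\Pa^2\|\eps\|) = \Or(\Pa\|\eps\|)$.

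The identities \eqref{eq:inner2}, \eqref{eq:inner4}, \eqref{eq:inner5} are handled by exactly the same procedure with $\Lambda Q$ replaced, respectively, by $S_1$, $\nabla Q$, $G_1$; in each case the analogue of the residual term is absorbed by the corresponding orthogonality \eqref{eq:ortho2}, \eqref{eq:ortho4}, \eqref{eq:ortho5}, and there is no $-\Re(\eps,Q_\Pa)$ contribution because the directions $\partial_b\Sigma_\Pa$, $\nabla\Sigma_\Pa$, $\partial_v\Sigma_\Pa$ all vanish at order $\Or(1)$. The identity \eqref{eq:inner3} is structurally different and is the reason the definition \eqref{def:rho} has its precise form: the six terms $2bQS_1\rho_1$, $b\Lambda\rho_1$, $-2bT_2$, $2vQG_1\rho_1$, $v\nabla\rho_1$, $vF_2$ on the right-hand side of $L_-\rho_2$ are chosen exactly so that, after transferring $L_-$ onto $\rho_2$, the leading $\Or(\Pa)$ contributions from $(-2\Sigma_\Pa\Theta_\Pa\eps_2, \rho_1)$ and from the $b(\Lambda\eps_2,\rho_1)$, $-v(\nabla\eps_2,\rho_1)$ pieces are matched termwise, while the remaining $-2b(\eps_2,T_2) + v(\eps_2,F_2)$ cancels the $(\eps_1,S_1)$ produced by $L_+\rho_1 = S_1$ via the orthogonality $(\eps_1,\partial_b\Theta_\Pa)=(\eps_2,\partial_b\Sigma_\Pa)$ with $\partial_b\Sigma_\Pa = vF_2 + 2bT_2 + \Or(\Pa^3)$. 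The main obstacle is not conceptual but bookkeeping: upgrading the trivial $\Or(\Pa\|\eps\|)$ estimate to $\Or(\Pa^2\|\eps\|)$ requires every one of the above linearized identities to line up exactly, which is itself the structural justification for the specific orthogonality directions and for the choice of $\rho$ in Section~\ref{sec:modestimates}.
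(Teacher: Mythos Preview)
Your approach is essentially the paper's: decompose $M_\pm$ as $L_\pm$ plus an $\Or(\Pa)$ cross term, transfer $L_\pm$ and the $b\Lambda$, $v\nabla$ operators onto the test functions, and then feed in the commutator identities for $L_-\Lambda S_1$, $L_-\Lambda G_1$, $L_+T_2$, $L_+F_2$, $L_-\rho_2$ to collapse the $\Or(\Pa)$ residuals via the orthogonality conditions. One correction to your bookkeeping: the residuals in \eqref{eq:inner2} and \eqref{eq:inner5} are not absorbed by the ``same'' orthogonalities \eqref{eq:ortho2} and \eqref{eq:ortho5} respectively, but rather each reduces to the orthogonality one step back in the generalized-kernel chain --- \eqref{eq:inner2} recombines into $(\eps_2,\Lambda\Sigma_\Pa)-(\eps_1,\Lambda\Theta_\Pa)$, hence uses \eqref{eq:ortho1}, and \eqref{eq:inner5} recombines into $-(\eps_2,\nabla\Sigma_\Pa)+(\eps_1,\nabla\Theta_\Pa)$, hence uses \eqref{eq:ortho4} (with \eqref{eq:ortho2} and \eqref{eq:ortho5} playing auxiliary roles along the way). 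Also, in your \eqref{eq:inner3} sketch the surviving term should read $-2b(\eps_2,T_2)-v(\eps_2,F_2)$, not $+v(\eps_2,F_2)$, since $\partial_b\Sigma_\Pa=2bT_2+vF_2$.
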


\begin{proof}

First, we recall that
\begin{align*}
M_+(\eps) &= L_+ \eps_1 - 2 \Sigma_\Pa \Theta_\Pa \eps_2 + \Or ( \Pa^2 \eps), \\
M_-(\eps) &= L_- \eps_2 - 2 \Sigma_\Pa \Theta_\Pa \eps_1 + \Or ( \Pa^2 \eps) .
\end{align*}
We divide the proof of \eqref{eq:inner1}--\eqref{eq:inner5} as follows.

\medskip
{\bf Proof of estimate \eqref{eq:inner1}.}
Furthermore, we notice the identity
\begin{equation} \label{eq:infin_poho}
L_- \Lambda S_1 = -S_1 + 2 (\Lambda Q) Q S_1 + \Lambda Q + \Lambda^2 Q. 
\end{equation}
To see this relation, we recall that $L_- S_1 =Ê\Lambda Q$ and hence
\begin{align*}
L_- \Lambda S_1 & =  [L_-, \Lambda] S_1 + \Lambda L_- S_1 = D S_1 + 2 x Q' Q S_1 + \Lambda^2 Q \\
& = - S_1 + Q^2 S_1 + \Lambda Q + 2x Q' Q S_1 + \Lambda^2 Q  \\
& = -S_1 + 2 (\Lambda Q) Q S_1 + \Lambda Q + \Lambda^2 Q,
\end{align*}
as claimed. In a similar fashion, we deduce from $L_- G_1 = -\nabla Q$ that
\begin{equation} \label{eq:infin_poho2}
L_- \Lambda G_1 = -G_1 -\nabla Q + 2 (\Lambda Q) Q G_1 - \Lambda \nabla Q.
\end{equation}
Next, we recall that
$$
\Lambda \Sigma_\Pa = \Lambda Q + \Or (\Pa^2), \quad \Lambda \Theta_\Pa = b \Lambda S_1 + v \Lambda G_1 + \Or(\Pa^2),
$$
Combining \eqref{eq:infin_poho} and \eqref{eq:infin_poho2} with this fact and using that $L_+ \Lambda Q = -Q$, we find that
\begin{align*}
\mbox{LHS of \eqref{eq:inner1}} & =  ( \eps_1, L_+ \Lambda Q) + b (\eps_2, L_- \Lambda S_1) + v (\eps_2, L_- \Lambda G_1) \\
& \quad  - 2b (Q S_1 \eps_2, \Lambda Q) - 2v (Q G_1 \eps_2, \Lambda Q) - b (\eps_2, \Lambda^2 Q)  + v (\eps_2, \nabla \Lambda Q) \\
& \quad + \Or (\Pa^2 \| \eps \|_{L^2} ) \\
& = - (\eps_1, Q) - b (\eps_2, S_1) - v(\eps_2, G_1)  + b (\eps_2, \Lambda Q)  - v(\eps_2, \nabla Q)  + \Or (\Pa^2 \| \eps \|_{L^2} ) \\
&= -  \Re (\eps, Q_b) + \Or ( \Pa^2 \| \eps \|_{L^2}) .
\end{align*}
Here we also used that $b (\eps_2, \Lambda Q) = \Or (\Pa^2 \| \eps \|_{L^2})$ and $v (\eps_2, \nabla Q) = \Or(\Pa^2 \| \eps \|_{L^2}) $, which follows from the orthogonality conditions \eqref{eq:ortho1} and \eqref{eq:ortho4}, respectively. This completes the proof of \eqref{eq:inner1}.

\medskip
{\bf Proof of estimate \eqref{eq:inner2}.}
Here we argue as follows. From the proof of Proposition \ref{prop:Qb_existence} we recall that
$$
\partial_b \Sigma_\Pa = 2b T_2 + v  F_2, \quad \partial_b \Theta_\Pa = S_1 + \Or (b^2) ,
$$
where
$$
 L_+ T_2 = \frac{1}{2} S_1 -\Lambda S_1 + S_1^2 Q, \quad L_+ F_2 = G_1-\Lambda G_1 + \nabla S_1 + 2 G_1 S_1 Q
$$
Using these facts, we compute
\begin{align*}
\mbox{LHS of \eqref{eq:inner2}} & =  (\eps_2, L_- S_1) - 2b (S_1 Q \eps_1, S_1) - 2v (\eps_1 G_1 Q, S_1) + b(\eps_1, \Lambda S_1) - v(\eps_1, \nabla S_1) \\
& \quad + 2b (\eps_1, L_+ T_2) + v (\eps_1, L_+ F_2) + \Or(\Pa^2 \| \eps \|_{L^2} ) \\
& = (\eps_2, \Lambda Q) - 2b (\eps_1, S^2_1 Q) - 2v (\eps_1, Q G_1 S_1 ) + b (\eps_1, \Lambda S_1) - v (\eps_1, \nabla S_1) \\
& \quad + 2b (\eps_1, \frac{1}{2} S_1 - \Lambda S_1 + S_1^2 Q ) + v (\eps_1, G_1 - \Lambda G_1 + \nabla S_1 + 2 G_1 S_1 Q) \\
& \quad + \Or(\Pa^2 \| \eps \|_{L^2} ) \\
& = (\eps_2, \Lambda Q) - b (\eps_1, \Lambda S_1)  - v (\eps_1, \Lambda G_1)  + v( \eps_1, G_1) - + \Or (\Pa^2 \| \eps \|_{L^2} )  \\
& = (\eps_2, \Lambda \Sigma_\Pa) - (\eps_1, \Lambda \Theta_\Pa) + \Or (\Pa^2 \| \eps \|_{L^2} ).
\end{align*}
In the last step we also used that $v(\eps_1, G_1) = \Or(\Pa^2 \| \eps \|_{L^2})$ thanks to the orthogonality condition \eqref{eq:ortho5}. This completes the proof of \eqref{eq:inner2}.

\medskip
{\bf Proof of estimate \eqref{eq:inner3}.}
We now turn to the proof of estimate \eqref{eq:inner3}. Indeed, by recalling \eqref{def:rho}, we find that
\begin{align*}
\mbox{LHS of \eqref{eq:inner3}} & = (\eps_2, L_- \rho_2) + (\eps_1, L_+ \rho_1) - 2b (\eps_2, Q S_1 \rho_1) - 2v (\eps_2, Q G_1 \rho_1) - b (\eps_2, \Lambda \rho_1) \\
& \quad + v (\eps_2, \nabla \rho_1) + \Or (\Pa^2 \| \eps \|_{L^2} ) \\
& = 2b (\eps_2,  Q S_1 \rho_1) + b (\eps_2, \Lambda \rho_1) - 2b (\eps_2, T_2) + 2v (\eps_2, Q G_1 \rho_1) - v (\eps_2, \nabla \rho_1) - v ( \eps_2, F_2) \\
& \quad + (\eps_1, S_1) - 2b (\eps_2, Q S_1 \rho_1) - 2v (\eps_2, Q G_1 \rho_1) - b (\eps_2, \Lambda \rho_1)  + v (\eps_2, \nabla \rho_1) \\
& \quad + \Or (\Pa^2 \| \eps \|_{L^2} ) \\
& = -2b (\eps_2, T_2) - v( \eps_2, F_2) + (\eps_1, S_1) + \Or (\Pa^2 \| \eps \|_{L^2} ) \\
& = - (\eps_2, \partial_b \Sigma_\Pa) + (\eps_1, \partial_b \Theta_\Pa) + \Or (\Pa^2 \| \eps \|_{L^2}) = \Or (\Pa^2 \| \eps \|_{L^2} ),
\end{align*}
using the orthogonality condition \eqref{eq:ortho2}. The proof of \eqref{eq:inner3} is now complete.

\medskip
{\bf Proof of estimate \eqref{eq:inner4}.}
First, we note that
\begin{equation}
\nabla \Sigma_\Pa = \nabla Q + \Or ( \Pa^2), \quad \nabla \Theta_\Pa = b \nabla S_1 + v \nabla G_1 + \Or(\Pa^2) .
\end{equation}
Moreover, we have the relations
\begin{equation}
L_+ \nabla Q = 0, \quad L_- \nabla S_1 = 2 (\nabla Q) Q S_1 + \nabla \Lambda Q, \quad L_- \nabla G_1 = 2 (\nabla Q) Q G_1 - \nabla^2 Q,
\end{equation}
which are obtained in an analogous way as done to show \eqref{eq:infin_poho} and \eqref{eq:infin_poho2}. Thus we obtain
\begin{align*}
\mbox{LHS of \eqref{eq:inner4}} & = b (\eps_2, L_- \nabla S_1) + v (\eps_2, L_- \nabla G_1) +  (\eps_1, L_+ \nabla Q) \\
& \quad - 2b (\eps_2 Q S_1, \nabla Q) - 2v (\eps_2 Q G_1 , \nabla Q) - b (\eps_2, \Lambda \nabla Q) + v(\eps_2, \nabla^2 Q) + \Or (\Pa^2 \| \eps \|_{L^2} ) \\
& = 2 b (\eps_2, (\nabla Q) Q S_1) + b (\eps_2,  \nabla \Lambda Q) + 2v(\eps_2, (\nabla Q) Q G_1) - v (\eps_2, \nabla^2 Q) \\
& \quad - 2b (\eps_2 Q S_1, \nabla Q) - 2v (\eps_2 Q G_1, \nabla Q) - b(\eps_2, \Lambda \nabla Q) + v (\eps_2, \nabla^2 Q) + \Or (\Pa^2 \| \eps \|_{L^2} ) \\
& =b  (\eps_2, [\nabla, \Lambda] Q) + \Or(\Pa^2 \| \eps \|_{L^2})  = b (\eps_2, \nabla Q) + \Or (\Pa^2 \| \eps \|_{L^2}) \\
& = \Or (\Pa^2 \| \eps \|_{L^2} ),
\end{align*}
since $b (\eps_2, \nabla Q) = \Or(\Pa^2 \| \eps \|_{L^2})$ due to condition \eqref{eq:ortho4}. This shows that \eqref{eq:inner4} holds.

\medskip
{\bf Proof of estimate \eqref{eq:inner5}.}
Here we notice that
\begin{equation}
\partial_v \Sigma_\Pa = b F_2 + 2 v H_2, \quad \partial_v \Theta_\Pa = G_1 ,
\end{equation}
where
\begin{equation}
L_+ H_2 = \nabla G_1 + G_1^2 Q .
\end{equation}
Using the relations above, we thus obtain 
\begin{align*}
\mbox{LHS of \eqref{eq:inner5}} & = (\eps_2, L_- G_1) - 2b (\eps_1 Q S_1, G_1) - 2v (\eps_1 Q G_1, G_1) + b(\eps_1, \Lambda G_1) \\
& \quad -v(\eps_1, \nabla G_1)  + b ( \eps_1, L_+ F_2 ) + 2v (\eps_2, L_+ H_2) + \Or (\Pa^2 \| \eps \|_{L^2} ) \\
& = -(\eps_2,\nabla Q) - 2b ( \eps_1 Q S_1, G_1) - 2v (\eps_1 Q G_1, G_1) + b (\eps_1, \Lambda G_1) \\
& \quad -v(\eps_1, \nabla G_1) + b (\eps_1, G_1 - \Lambda G_1 + \nabla S_1 + 2 G_1 S_1 Q)  \\
& \quad +2v (\eps_1, \nabla G_1 + G_1^2 Q ) + \Or (\Pa^2 \| \eps \|_{L^2} ) \\
& = -(\eps_2, \nabla Q) + b (\eps_1, \nabla S_1) + v (\eps_1, \nabla G_1) + \Or (\Pa^2 \| \eps \|_{L^2} ) \\
& = -(\eps_2, \nabla \Sigma_\Pa) + (\eps_1, \nabla \Theta_\Pa) + \Or (\Pa^2 \| \eps \|_{L^2}),
\end{align*}
thanks to the orthogonality condition \eqref{eq:ortho4}. This completes the proof of \eqref{eq:inner5} and hence we have proven that Lemma \ref{lem:modequations} holds.
\end{proof}

\section{The Cauchy Problem}

\label{sec:cauchy}

We have the following local well-posedness result concerning the Cauchy problem for the $L^2$-critical half-wave equation \eqref{eq:wave}. In fact, the proof of the following well-posedness result for problem \eqref{eq:wave} can be deduced in a verbatim fashion as for the so-called {\em cubic Szeg\"o equation} treated in \cite{GeGr2010}. We have the following result, where we only consider forward times, which is no restriction due to the time-reversibility of \eqref{eq:wave}.

\begin{thm} \label{thm:lwp}
Let $s \geq 1/2$ be given. For every initial datum $u_0 \in H^s(\R)$, there exists a unique solution $u \in C^0([t_0,T); H^s(\R))$ of problem \eqref{eq:wave}. Here $t_0 < T(u_0) \leq +\infty$ denotes its maximal time of existence (in forward time). Moreover, we have the following properties.
\begin{itemize}
\item[(i)] {\bf Conservation of $L^2$-mass, energy and linear momentum:} It holds that
$$
 M(u) = \int |u|^2 , \ \ E(u) = \frac{1}{2} \int |D^{\frac 1 2} u|^2 - \frac{1}{4} \int |u|^2, \ \ P(u) = \int \overline{u} (-i \partial_x u) ,
 $$
are conserved along the flow.\\[0.5ex]

\item[(ii)] {\bf Blowup alternative in $H^{1/2}$:} Either $T(u_0) = +\infty$ or if $T(u_0) < +\infty$ then $\| u(t) \|_{H^{1/2}} \to +\infty$ as $t \to T^-$.\\[0.5ex]

\item[(iii)] {\bf Continuous dependence:} If $s > 1/2$, then the flow map $u_0 \mapsto u(t)$ is Lipschitz continuous on bounded subsets of $H^s(\R)$.\\[0.5ex]

\item[(iv)] {\bf Global Existence for Small Data:} If $u_0 \in H^s(\R)$ satisfies $\| u_0 \|_{L^2} < \| Q \|_{L^2}$, then $T(u_0) = +\infty$ holds true.\\[0.5ex]  
\end{itemize}
\end{thm}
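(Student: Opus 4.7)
The plan is to follow the standard contraction mapping scheme, exploiting that the propagator $e^{-itD}$ is an isometry on every $H^s(\R)$ but provides no dispersive smoothing in one space dimension. The starting point is the Duhamel formulation
$$ u(t) = e^{-i(t-t_0)D} u_0 + i \int_{t_0}^t e^{-i(t-\tau)D} \bigl(|u|^2 u\bigr)(\tau)\, d\tau. $$
For $s>1/2$, since $H^s(\R)$ is a Banach algebra one has $\||u|^2 u\|_{H^s} \lesssim \|u\|_{H^s}^3$, and a standard fixed point argument in a closed ball of $C^0([t_0,t_0+T];H^s(\R))$ yields a unique local solution on an interval of length $T=T(\|u_0\|_{H^s})>0$, together with Lipschitz continuous dependence (property (iii)). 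Standard persistence of regularity then propagates additional smoothness from $H^{s'}$, $s'>s$, for as long as the $H^s$-solution exists.

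For the endpoint $s=1/2$, the space $H^{1/2}(\R)$ sits exactly at the Sobolev-algebra threshold, so a direct contraction fails. The plan is to regularize the data $u_{0,n}:=\chi(D/n)u_0\to u_0$ in $H^{1/2}$, solve via the sub-critical theory, and establish a uniform $H^{1/2}$-bound on a common time interval whose length depends only on $\|u_0\|_{H^{1/2}}$. This step requires a refined trilinear estimate for $|u|^2 u$ in $H^{1/2}$ via Littlewood--Paley/paraproduct decomposition controlling the high-high-low frequency interactions, exactly in the spirit of \cite{GeGr2010}. A compactness argument then produces a limit $u\in C^0([t_0,t_0+T];H^{1/2})$, while uniqueness at this regularity follows from a Gronwall estimate applied to the difference of two solutions in $L^2$, absorbing the cubic difference with a sub-critical trilinear bound using the already established $H^{1/2}$-regularity.

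The remaining items are then routine. Conservation of $L^2$-mass, energy and linear momentum (property (i)) holds by direct integration by parts on smooth solutions and extends to $H^{1/2}$ by approximation and continuous dependence. The blowup alternative (property (ii)) follows because the local existence time depends only on $\|u_0\|_{H^{1/2}}$, so any $H^{1/2}$-bounded solution can be iteratively extended. Small-data global existence (property (iv)) is a direct consequence of the sharp Gagliardo--Nirenberg inequality \eqref{eq:gwpbound} combined with mass and energy conservation, yielding
$$ \tfrac{1}{2}\|D^{1/2}u(t)\|_{L^2}^2\Bigl(1-\|u_0\|_{L^2}^2/\|Q\|_{L^2}^2\Bigr)\le E(u_0), $$
so that $\|u(t)\|_{H^{1/2}}$ stays uniformly bounded whenever $\|u_0\|_{L^2}<\|Q\|_{L^2}$, forcing $T(u_0)=+\infty$ via the blowup alternative.

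The main obstacle is the endpoint case $s=1/2$. Since $e^{-itD}$ possesses no smoothing on $\R$ (it acts essentially as a pair of transports on the positive/negative frequency components) and $L^2$-criticality places us exactly at the Sobolev algebra threshold, the trilinear paraproduct estimates driving the uniform bounds for the regularized problem and the uniqueness argument have to be executed with care, closely following the strategy of G\'erard and Grellier \cite{GeGr2010} for the cubic Szeg\"o equation.
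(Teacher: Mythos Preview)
Your outline is broadly correct and indeed parallels the strategy in \cite{GeGr2010} that the paper follows, but two of the key technical mechanisms at the endpoint $s=1/2$ are left vague in a way that matters.

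For the blowup alternative (ii), you assert that the local existence time depends only on $\|u_0\|_{H^{1/2}}$, which would make (ii) immediate. The paper instead proves (ii) directly for $s>1/2$ data: assuming $K=\sup_{t<T}\|u(t)\|_{H^{1/2}}<\infty$, it controls $\|u(t)\|_{H^s}$ via the Duhamel formula and the Br\'ezis--Gallou\"et type inequality
\[
\|u\|_{L^\infty}\le C_s\,\|u\|_{H^{1/2}}\Bigl[\log\bigl(2+\|u\|_{H^s}/\|u\|_{H^{1/2}}\bigr)\Bigr]^{1/2},
\]
which yields $f'(t)\lesssim K^2\log(2+f(t))\,f(t)$ for $f(t)=\|u(t)\|_{H^s}/K$ and hence a double-exponential Gronwall bound. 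Your ``refined trilinear/paraproduct estimate'' would ultimately have to reproduce this logarithmic loss; the paper makes the mechanism explicit.

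More seriously, your uniqueness argument at $s=1/2$ (``Gronwall in $L^2$, absorbing the cubic difference with a sub-critical trilinear bound'') does not close as stated: since $H^{1/2}(\R)\not\hookrightarrow L^\infty(\R)$, one cannot bound $\||u|^2u-|v|^2v\|_{L^2}$ by $C(\|u\|_{H^{1/2}},\|v\|_{H^{1/2}})\,\|u-v\|_{L^2}$, so a linear Gronwall fails. The paper uses the Yudovich argument: from the Trudinger-type bound $\|u\|_{L^p}\le C\sqrt{p}\,\|u\|_{H^{1/2}}$ (valid for all $2<p<\infty$ with $C$ independent of $p$) one gets, for $g(t)=\|u(t)-v(t)\|_{L^2}^2$,
\[
|g'(t)|\le C\,p\,g(t)^{1-1/p}\quad\text{for every }p>2,
\]
hence $g(t)\le (Ct)^p$ when $g(0)=0$, and sending $p\to\infty$ gives $g\equiv 0$ on $[0,1/C)$. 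This Osgood-type iteration, not a standard Gronwall, is the crux of uniqueness in the borderline space; you should name it explicitly.
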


\begin{proof} Without loss of generality we assume that $t_0=0$ holds. Consider the corresponding integral equation
\begin{equation} \label{eq:duhamel}
u(t) = e^{-i t D} u_0 - i \int_0^t e^{-i(t-t') D} |u(t')|^2 u(t') \,dt' .
\end{equation}
We discuss the cases of initial data in $H^{s}(\R)$ with $s > 1/2$ first. Below, we indicate how to treat the borderline case $s=1/2$.
 
\medskip
{\bf Case $s > 1/2$.} First, we suppose that $s > 1/2$ holds. In this case, the Sobolev embedding $\| u \|_{L^\infty} \leq C_s \| u \|_{H^{s}}$ in $\R$ shows that the nonlinearity $u \mapsto |u|^2 u$ is Lipschitz on bounded subsets of $H^s(\R)$. Hence, local existence and uniqueness of $u \in C^0([0,T); H^s(\R))$ follows from a simple fixed point argument, provided that $s > 1/2$ holds. Also, continuous dependence of $u(t)$ with respect to the initial datum $u_0$ in $H^s(\R)$ as expressed in (iii) follows by standard arguments, using that $u \mapsto |u|^2 u$ is locally Lipschitz on $H^s(\R)$. To prove (i), we note that a calculation shows $\frac{d}{dt} E(u(t)) =0$ and $\frac{d}{dt} M(u(t))=0$, assuming that we have initial data in $H^2(\R)$ so that $E(u(t))$ and $M(u(t)$) are $C^1$ in $t$. By a standard approximation argument and local wellposedness in $H^s(\R)$ for $s > 1/2$, we conclude that $E(u(t))$ and $M(u(t))$ are also conserved for initial data in $H^s(\R)$ for $s > 1/2$. 

To complete the proof of Theorem \ref{thm:lwp} for the case $s > 1/2$, we have to show that property (ii) holds. Indeed, this can be seen as follows. From standard theory of semilinear evolution equations with locally Lipschitz perturbations, we have the blowup alternative in $H^s(\R)$. That is, if $u \in C^0([0,T); H^s(\R))$ has the maximal time of existence $T(u_0) < +\infty$, then $\| u(t) \|_{H^s} \to +\infty$ as $t \to T^-$. Suppose now that $T(u_0) < +\infty$ and assume that $K= \sup_{t \in [0,T)} \| u(t) \|_{H^{1/2}} < +\infty$ holds. We show that this implies $\tilde{K} = \sup_{t \in [0,T)} \| u(t) \|_{H^s} <+\infty$ as well, which would prove that (ii) holds. In fact, from \eqref{eq:duhamel} and invoking Lemma \ref{lem:brezis}, we conclude that
\begin{align*}
\| u(t) \|_{H^s} & \leq \| u_0 \|_{H^s} + \int_0^t \| |u(t')|^2 u(t') \|_{H^s} \, dt'  \leq \| u_0 \|_{H^s} + C \int_0^t \| u(t') \|_{L^\infty}^2 \| u(t') \|_{H^s} \, dt' \\
& \leq \| u_0 \|_{H^s} + C K^2 \int_0^t \left [ \log \left ( 2 + \frac{\| u(t') \|_{H^s}}{K}   \right ) \right ] \| u(t') \|_{H^s} \, dt' .
\end{align*}
Note here the fact that $z^2 \log (1+ a/z) \leq K^2 \log (1 + a/K)$ if $0 \leq z \leq K$ and $a \geq 0$. If we let $f(t) := \| u(t) \|_{H^s}/K$, we obtain the integral inequality
\begin{equation}
f(t) \leq f(0) + C \int_0^t \left [ \log ( 2 + f(t')) \right ] f(t') \, dt'.
\end{equation}  
By Gronwall's lemma, this implies
\begin{equation}
2 + f(t) \leq (2+f(0))^{e^{C t}}, \quad \mbox{for $t \in [0,T)$},
\end{equation}
which shows that $\sup_{t \in [0,T)} \| u(t) \|_{H^s} < +\infty$ holds. 

Finally, by recalling \eqref{eq:gwpbound}, it is easy to see that initial data $\| u_0 \|_{L^2} < \| Q \|_{L^2}$ are a-priori bounded in $H^{1/2}$ and hence $u(t)$ extends globally in time, thanks to the blowup alternative shown above. This completes the proof of Theorem \ref{thm:lwp} for $s > 1/2$.

\medskip
{\bf Case $s=1/2$.} In the limiting case when $s=1/2$ holds, we need a more refined analysis of the problem. In fact, this can be done in an similar fashion as for the Cauchy problem for the cubic Szeg\"o equation mentioned above; see \cite{GeGr2010}. For the reader's convenience, we give a brief sketch of the main arguments that treat the borderline case $s=1/2$ as follows.

First, we can obtain a weak solution $u \in C_w([0,T); H^{1/2}(\R))$ by an approximation and compactness argument.

Then, we show uniqueness by an argument basically due to Judovic \cite{Ju1963}; see also \cite{Og1990}. More precisely, by using Lemma \ref{lem:trud} below, the quantity $g(t) = \| u(t) - \tilde{u}(t) \|_{L^2}^2$ is found to satisfy
$$
|g'(t)| \lesssim \left ( \| u(t) \|_{L^{2 (p+1)}}^{2(1+ \frac 1 p )} + \| \tilde{u}(t)Ê\|_{L^{2 (p+1)}}^{2(1+ \frac 1 p)} \right ) \| u(t) - \tilde{u}(t) \|_{L^2}^{2 ( 1- \frac 1 p) }  \leq C p g(t)^{1- \frac 1 p},
$$
for any exponent $p > 2$ and where $C > 0$ is some constant depending only on the bound $\sup_{t \in I} \{ \| u(t) \|_{H^{1/2}}, \| \tilde{u}(t) \|_{H^{1/2}} \}$ with $I$ being any compact time interval of existence including $t=0$. Thus if $g(0)=0$, we obtain that 
$$g(t) \leq (C t)^p,$$ 
by integrating the previous bound. In particular, we see that $g(t) \to 0$ for any $t < 1/C$ as $p \to +\infty$. Hence we deduce that $g(t) \equiv 0$ for $t < 1/C$, provided that $g(0)=0$. Repeating the argument in time if necessary, we deduce uniqueness of the weak solution $u \in C_w([0,T); H^{1/2}(\R))$ solving \eqref{eq:wave}.

Finally, we upgrade $u \in C_w([0,T); H^{1/2}(\R))$ to $u \in C^0([0,T); H^{1/2}(\R)$ by a standard argument using weak convergence and the time reversibility of the flow. Also, the proof of continuous dependence in $H^{1/2}(\R)$ follows from standard arguments. This completes our sketch of the proof of Lemma \ref{thm:lwp}. \end{proof}

We conclude the present section with some fundamental estimates related for the space $H^{1/2}(\R)$. (See also \cite{GeGr2010} for similar statements and proofs in the periodic setting.) 

\begin{lemma} \label{lem:brezis}
For  $s > 1/2$ and $u \in H^s(\R)$, we have 
$$
\| u \|_{L^\infty} \leq C_s \| u \|_{H^{1/2}} \left [ \log \left ( 2 + \frac{\| u \|_{H^s}}{\| u \|_{H^{1/2}}} \right ) \right ]^{1/2} ,
$$
where $C_s > 0$ is some constant that only depends on $s > 1/2$. 
\end{lemma}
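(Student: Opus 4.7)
The plan is to prove this classical Brezis--Gallouet type logarithmic Sobolev inequality via a Littlewood--Paley style decomposition of $\hat u$ into low and high frequencies, with the cutoff $R$ chosen optimally at the end.

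First I would start from the Fourier inversion bound $\|u\|_{L^\infty} \leq \frac{1}{2\pi} \|\hat u\|_{L^1}$ and split
\begin{equation*}
\|\hat u\|_{L^1} = \int_{|\xi|\leq R} |\hat u(\xi)|\,d\xi + \int_{|\xi|>R} |\hat u(\xi)|\,d\xi
\end{equation*}
for a parameter $R>0$ to be chosen. On the low-frequency piece I would apply Cauchy--Schwarz with the weight $(1+|\xi|)^{1/2}$, getting
\begin{equation*}
\int_{|\xi|\leq R} |\hat u(\xi)|\,d\xi \leq \left(\int_{|\xi|\leq R}\frac{d\xi}{1+|\xi|}\right)^{1/2}\|u\|_{H^{1/2}} \lesssim \bigl(\log(2+R)\bigr)^{1/2}\,\|u\|_{H^{1/2}}.
\end{equation*}
On the high-frequency piece I would use Cauchy--Schwarz with the weight $(1+|\xi|)^{s}$; since $s>1/2$,
\begin{equation*}
\int_{|\xi|>R} |\hat u(\xi)|\,d\xi \leq \left(\int_{|\xi|>R}\frac{d\xi}{(1+|\xi|)^{2s}}\right)^{1/2}\|u\|_{H^s} \lesssim_s R^{\frac12-s}\,\|u\|_{H^s}.
\end{equation*}

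Combining these two bounds, I would obtain, for every $R>0$,
\begin{equation*}
\|u\|_{L^\infty} \leq C_s\bigl\{\bigl(\log(2+R)\bigr)^{1/2}\|u\|_{H^{1/2}} + R^{\frac12-s}\|u\|_{H^s}\bigr\}.
\end{equation*}
The key step is then to optimize in $R$: I would choose $R$ so that the second term matches the first, namely $R^{s-1/2}\sim \|u\|_{H^s}/\|u\|_{H^{1/2}}$, i.e.\
\begin{equation*}
R = \left(\frac{\|u\|_{H^s}}{\|u\|_{H^{1/2}}}\right)^{\frac{1}{s-1/2}}.
\end{equation*}
With this choice, $R^{1/2-s}\|u\|_{H^s} = \|u\|_{H^{1/2}}$ and $\log(2+R) \lesssim_s \log\bigl(2+\|u\|_{H^s}/\|u\|_{H^{1/2}}\bigr)$ (noting that $\|u\|_{H^s}\geq \|u\|_{H^{1/2}}$ so $R\geq 1$, which makes the logarithm well-controlled by the quantity appearing in the statement).

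This is essentially a routine computation with no real obstacle; the only point that requires a little care is making sure the log factor has the exact shape stated in the lemma, which is handled by the simple estimate $\log(2+R^\alpha)\leq \alpha\log(2+R) + C_\alpha$ for $\alpha=1/(s-1/2)$, so that the power $1/(s-1/2)$ in the definition of $R$ is absorbed into the constant $C_s$.
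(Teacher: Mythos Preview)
Your proof is correct and follows essentially the same approach as the paper: split $\|\hat u\|_{L^1}$ at a frequency threshold, apply Cauchy--Schwarz with weights $(1+|\xi|)^{1/2}$ and $(1+|\xi|)^{s}$ on the low and high pieces respectively, and then optimize over the threshold. In fact you spell out the optimization step and the handling of the logarithm's shape more explicitly than the paper, which simply says ``by minimizing this bound with respect to $\Lambda>0$.''
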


\begin{proof} This follows from standard arguments in the literature. For the reader's convenience, we reproduce the proof here. For every $\Lambda > 0$ fixed, we deduce that
\begin{align*}
\| u \|_{L^\infty} & \lesssim  \int_{|\xi| \leq \Lambda} |\hat{u}(\xi)| \, d \xi + \int_{|\xi| \geq \Lambda} |\hat{u}(\xi)| \, d \xi \\
& \lesssim \int_{|\xi| \leq \Lambda} ( 1+ |\xi| )^{1/2} \frac{|\hat{u}(\xi)|}{(1+ |\xi|)^{1/2}} \, d \xi + \int_{|\xi| \geq \Lambda} (1+ |\xi|)^{s} \frac{|\hat{u}(\xi)|}{(1+|\xi|)^{s}} \, d \xi \\
& \lesssim \| u \|_{H^{1/2}} \left ( \int_{|\xi| \leq \Lambda} \frac{d \xi}{1+ |\xi|} \right )^{1/2} + \| u \|_{H^s} \left ( \int_{|\xi| \geq \Lambda} \frac{d \xi}{(1+|\xi|)^{2s}} \right )^{1/2} \\
& \lesssim  \left ( \| u \|_{H^{1/2}} \log (\Lambda +1)^{1/2} + \| u \|_{H^s} \Lambda^{-s+1/2} \right ).
\end{align*}
By minimizing this bound with respect to $\Lambda > 0$, we obtain the desired inequality.
\end{proof}

\begin{lemma} \label{lem:trud}
For any $u \in H^{1/2}(\R)$ and $2 < p < +\infty$, it holds that
$$
\| u \|_{L^p} \leq C p^{1/2} \| u \|_{H^{1/2}},
$$
where the constant $C > 0$ is independent of $p$ and $u$.
\end{lemma}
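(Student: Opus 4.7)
\textbf{Proof plan for Lemma \ref{lem:trud}.}

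The plan is to use a Littlewood--Paley decomposition and Bernstein's inequality, with the key point being that the Bernstein bound in dimension $1$ produces a factor $2^{j(1/2-1/p)}$ that is exactly balanced against the $H^{1/2}$ norm, and the resulting geometric series in $j$ is precisely the place where the factor $\sqrt{p}$ appears.

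Concretely, I would first split $u = u_{\mathrm{lo}} + u_{\mathrm{hi}}$ into a low-frequency part (Fourier support in $|\xi|\leq 1$) and a high-frequency part. For $u_{\mathrm{lo}}$, Hausdorff--Young gives $\|u_{\mathrm{lo}}\|_{L^\infty}\lesssim \|\widehat{u_{\mathrm{lo}}}\|_{L^1}\lesssim \|u\|_{L^2}$ since the support has bounded measure, and interpolation with $L^2$ then yields $\|u_{\mathrm{lo}}\|_{L^p}\lesssim \|u\|_{H^{1/2}}$, which is trivially absorbed into $Cp^{1/2}\|u\|_{H^{1/2}}$.

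For the high-frequency part, decompose $u_{\mathrm{hi}}=\sum_{j\geq 0} P_j u$ with $P_j$ a standard dyadic projector onto $|\xi|\sim 2^j$. Bernstein's inequality in one dimension gives
\begin{equation*}
\|P_j u\|_{L^p}\lesssim 2^{j(\frac{1}{2}-\frac{1}{p})}\|P_j u\|_{L^2},\qquad p\geq 2.
\end{equation*}
Setting $a_j := 2^{j/2}\|P_j u\|_{L^2}$, Plancherel gives $\sum_{j\geq 0} a_j^2\lesssim \|u\|_{H^{1/2}}^2$, and the inequality becomes $\|P_j u\|_{L^p}\lesssim 2^{-j/p}a_j$. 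By the triangle inequality and Cauchy--Schwarz,
\begin{equation*}
\|u_{\mathrm{hi}}\|_{L^p}\leq \sum_{j\geq 0}\|P_j u\|_{L^p}\lesssim \Bigl(\sum_{j\geq 0} 2^{-2j/p}\Bigr)^{\!1/2}\Bigl(\sum_{j\geq 0} a_j^2\Bigr)^{\!1/2}.
\end{equation*}

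The main (and only real) step is then to compute the geometric sum. For $p>2$,
\begin{equation*}
\sum_{j\geq 0}2^{-2j/p}=\frac{1}{1-2^{-2/p}}\sim \frac{p}{2\ln 2}\quad\text{as }p\to\infty,
\end{equation*}
and for $p$ in a bounded range away from infinity this sum is $O(1)\leq O(p)$, so uniformly in $p>2$ we get $\sum_{j\geq 0}2^{-2j/p}\leq Cp$. Plugging in yields $\|u_{\mathrm{hi}}\|_{L^p}\lesssim p^{1/2}\|u\|_{H^{1/2}}$, combining with the low-frequency bound to complete the proof. I do not anticipate any serious obstacle here: the whole argument hinges on the dyadic splitting and the elementary estimate of the geometric series, and the $p^{1/2}$ constant is sharp precisely because $H^{1/2}$ is the Sobolev space at the borderline of $L^\infty$ embedding in one dimension.
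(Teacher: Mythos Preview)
Your argument is correct and complete. The only point worth double-checking is that the implicit constant in Bernstein's inequality is uniform in $p$; this is indeed the case, since writing $P_j u = \check{\psi}_j * P_j u$ and applying Young's inequality reduces matters to $\|\check{\psi}\|_{L^r}$ for $r \in (1,2)$, which is bounded uniformly for a fixed Schwartz bump $\psi$.

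Your route, however, is genuinely different from the paper's. The paper does not use Littlewood--Paley at all: it writes $\|u\|_{L^p}^p = p\int_0^\infty t^{p-1}\mu(\{|u|\geq t\})\,dt$, then for each level $t$ splits $u=u_{\leq\Lambda_t}+u_{\geq\Lambda_t}$ in frequency with $\Lambda_t$ chosen (via the logarithmic bound $\|u_{\leq\Lambda}\|_{L^\infty}\lesssim\log(\Lambda+1)^{1/2}$) so that $\|u_{\leq\Lambda_t}\|_{L^\infty}\leq t/2$, and applies Chebyshev to the high-frequency remainder. The $p^{1/2}$ then comes out of an estimate of the form $(\log(|\xi|+1))^{(p-2)/2}\lesssim ((p-2)/2)^{(p-2)/2}(1+|\xi|^2)^{1/2}$. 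Your Littlewood--Paley plus Cauchy--Schwarz argument is more direct and transparent --- the $p^{1/2}$ appears immediately as the square root of a geometric sum --- whereas the paper's layer-cake method, though a bit longer here, is in the spirit of Moser--Trudinger and generalizes naturally to exponential integrability statements. Both are perfectly valid for the purpose at hand.
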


\begin{proof} This follows from standard arguments in the literature. For the reader's convenience, we present the details. Let $\mu(\cdot)$ denote the Lebesgue measure on $\R$. We have the general formula
$$
\| u \|_{L^p}^p = p \int_0^\infty t^{p-1} \mu ( \{ x : |u(x)|  \geq t \} ) \, dt.
$$ 
Without loss of generality, we will  assume that $\| u \|_{H^{1/2}} = 1$ in what  follows. Next, we write $u=u_{\leq \Lambda} + u_{\geq \Lambda}$ where $u_{\leq \Lambda}(x) = \frac{1}{\sqrt{2 \pi}} \int_{|\xi| \leq \Lambda} \hat{u}(\xi) e^{i \xi x} \, d \xi$. For any $t \geq 0$, let us choose $\Lambda = \Lambda_t \geq 0$ such that $\| u_{\leq \Lambda} \|_{L^\infty} \leq t/2$. Indeed, note that
\begin{align*}
\| u_{\leq \Lambda} \|_{L^\infty} & \lesssim \int_{|\xi| \leq \Lambda} |\hat{u}(\xi)| \, d \xi \lesssim \left ( \int_{|\xi| \leq \Lambda} |\hat {u}(\xi) |^2 \, d \xi \right )^{1/2} \cdot \log (\Lambda+1)^{1/2} \\
& \lesssim \| u \|_{H^{1/2}} \log (\Lambda+1)^{1/2} = c \log (\Lambda + 1)^{1/2},
\end{align*}
where $c > 0$ is some universal constant (and note that $\| u \|_{H^{1/2}} = 1$ by assumption). Hence, for any $t \geq 0$, we can always find $\Lambda = \Lambda_t$ to ensure that $\| u_{\leq \Lambda} \|_{L^\infty} \leq t/2$. Making this choice, we find that
\begin{align*}
\| u \|_{L^p}^p & \leq p \int_{0}^\infty t^{p-1} \mu ( \left \{ x : |u_{\geq \Lambda_t}| \geq t/2 \right \} ) \, dt  \leq p \int_0^\infty t^{p-3} \| u_{\geq \Lambda_t} \|_{L^2}^2  \, dt   \\
& \leq p \int_0^\infty t^{p-3} \int_{|\xi| \geq \Lambda_t} |\hat{u}(\xi)|^2 \, d \xi \, dt \leq p \int \left ( \int_0^{2 \log (|\xi|+1)^{1/2}} t^{p-3} \, dt \right ) |\hat{u}(\xi)|^2 \, d \xi \\
& \leq \frac{p}{p-2} \int (\log (|\xi|+1))^{(p-2)/2} |\hat{u}(\xi)|^2 \, d\xi \\
& \lesssim \frac{p}{p-2} \left ( \frac{p-2}{2} \right )^{\frac{p-2}{2}} \int (|\xi|^2 +1)^{1/2} |\hat{u}(\xi)|^2 \, d \xi \lesssim p^{p/2} \| u \|_{H^{1/2}}^2 \lesssim p^{p/2}.
\end{align*}
Here we used the bound $(\log (|\xi|+1))^\ell \lesssim \ell^\ell (|\xi|^2 +1)^{1/2}$ for $\ell > 0$. By taking the $1/p$-th power on both side, we obtain the claimed inequality. \end{proof}

\section{Completion of the Proof of Lemma~\ref{lem:back}}

\label{sec:higherHs}

Here we improve the bound \eqref{ineq:cont_bound1.5}, thus completing {\bf Step 6} in the proof of Lemma~\ref{lem:back}. We achieve this by using a Fourier-theoretic method.  Our point of departure is again the identity
\[
i\partial_t\tilde{u}= D \tilde{u}- |\tilde{u}|^2\tilde{u} - \psi - F,
\]
where we have 
\[
F = |\tilde{u}+\tilde{Q}|^2(\tilde{u}+\tilde{Q}) - |\tilde{Q}|^2\tilde{Q} - |\tilde{u}|^2\tilde{u},\quad \tilde{Q}: = \frac{1}{\lambda^{\frac 1 2}(t)} Q_\Pa \left ( (\frac{x-\alpha(t)}{\lambda(t)} \right )e^{i\gamma(t)}
\]
We plan to obtain a $H^{\frac{1}{2}+\eps}$-bound on $\tilde{u}$ for $\eps>0$ sufficiently small, taking advantage of the a priori bounds at time $t_1$ and those assumed for $t\in [t_0, t_1]$. Consider 
\begin{equation}\label{eq:bound0}\begin{split}
i\frac{d}{dt}\left ( D^{\frac{1}{2}+\eps}\tilde{u}, D^{\frac{1}{2}+\eps}\tilde{u} \right ) 
&=i\Im\left (  D^{\frac{1}{2}+\eps}\big[ D \tilde{u}- |\tilde{u}|^2\tilde{u} - \psi - F\big], D^{\frac{1}{2}+\eps}\tilde{u}\right ) \\
&=i\Im\left (  - D^{\frac{1}{2}+\eps}\big[|\tilde{u}|^2\tilde{u} +\psi + F\big], D^{\frac{1}{2}+\eps}\tilde{u}\right ). \\
\end{split}\end{equation}
We commence with the contribution of that part of $F$ which is linear in $\tilde{u}$. Thus we have to estimate the expression 
\[
\Im \left (  D^{\frac{1}{2}+\eps}\big[2\Re(\tilde{u}\overline{\tilde{Q}})\tilde{Q} + |\tilde{Q}|^2\tilde{u}\big],  D^{\frac{1}{2}+\eps}\tilde{u}\right ) .
\]
In order to control this, we need to bound expressions of the form 
\[
D^{\alpha}(f g) - f(D^{\alpha}g),\quad \alpha\in [0,1] .
\]
We claim the bound 
\[
\|D^{\alpha}(f g) - f(D^{\alpha}g)\|_{L^2}\lesssim \|D^\alpha f\|_{L^2} \|\hat{g}\|_{L^1},\quad \alpha\in [0,1] .
\]
This follows from Plancherel's theorem and the identity 
\[
\widetilde{D^{\alpha}(f g)}(\xi) - \widetilde{f(D^{\alpha}g)}(\xi) = \int_{\R}(|\xi|^\alpha - |\eta|^\alpha)\hat{f}(\xi-\eta)\hat{g}(\eta)\,d\eta ,
\]
and we have 
\[
\big| |\xi|^\alpha - |\eta|^\alpha\big|\leq |\xi - \eta|^\alpha,\quad \alpha\in [0,1] .
\]
In particular, we find 
\[
\big|\int_{\R}(|\xi|^\alpha - |\eta|^\alpha)\hat{f}(\xi-\eta)\hat{g}(\eta)\,d\eta\big|\leq \int_{\R}|\xi - \eta|^\alpha |\hat{f}|(\xi-\eta)|\hat{g}|(\eta)\,d\eta,\quad \alpha\in [0,1] ,
\]
whence 
\[
\|\int_{\R}(|\xi|^\alpha - |\eta|^\alpha)\hat{f}(\xi-\eta)\hat{g}(\eta)\,d\eta\|_{L_{\xi}^2}\leq \min\{\|D^{\alpha}f\|_{L^2}\|\hat{g}\|_{L^1},\,\|\widehat{D^{\alpha}f}\|_{L^1}\| g\|_{L^2}\},\quad \alpha\in [0,1] .
\]
Further, we recall the elementary fractional Leibniz rule 
\[
\| D^\alpha(fg)\|_{L^2}\lesssim \| D^{\alpha}f\|_{L^2} \|g\|_{L^\infty} + \| D^{\alpha}g\|_{L^2} \|f\|_{L^\infty},\quad \alpha \geq 0.
\]
We immediately infer that 
\begin{equation}\label{eq:bound1}\begin{split}
&\big|\Im \left (  D^{\frac{1}{2}+\eps} \big(|\tilde{Q}|^2\tilde{u}\big),  D^{\frac{1}{2}+\eps}\tilde{u}\right ) \big| \\&= 
\big|\Im \left ( \big(|\tilde{Q}|^2 D^{\frac{1}{2}+2\eps} \tilde{u}\big),  D^{\frac{1}{2}}\tilde{u}\right ) \big| + O(\|\widetilde{D^{\frac{1}{2}+2\eps}|\tilde{Q}|^2}\|_{L^1}\|\tilde{u}\|_{L^2}\|D^{\frac{1}{2}}\tilde{u}\|_{L^2}) .\\
\end{split}\end{equation}
We can estimate the right-hand term by 
\[
O(\|\widetilde{D^{\frac{1}{2}+2\eps}|\tilde{Q}|^2}\|_{L^1}\|\tilde{u}\|_{L^2}\|D^{\frac{1}{2}}\tilde{u}\|_{L^2})\lesssim \lambda^{-\frac{3}{2}-2\eps}\lambda \lambda^{\frac{1}{2}}\lesssim \lambda^{-2\eps},
\]
which is integrable for $\eps$ small enough. Next, consider the more delicate term 
\[
\big|\Im \left ( \big(|\tilde{Q}|^2 D^{\frac{1}{2}+2\eps} \tilde{u}\big),  D^{\frac{1}{2}}\tilde{u}\right ) \big| .
\]
Here the key is to exploit a cancellation: Writing $\widehat{D^{\frac{1}{2}}\tilde{u}}(\xi) = f(\xi)$, we find 
\begin{align*}
2i\Im\mathcal{F}\big(D^{\frac{1}{2}+2\eps} \tilde{u}\overline{D^{\frac{1}{2}}\tilde{u}}\big)(\xi)& = \int_{\R}|\xi-\eta|^{2\eps}[\hat{f}(\xi-\eta)\overline{\hat{f}(\eta)} - \overline{\hat{f}}(\xi-\eta)\hat{f}(\eta)] \,d\eta\\
&=\int_{\R}[|\xi-\eta|^{2\eps} - |\eta|^{2\eps}]\hat{f}(\xi-\eta)\overline{\hat{f}(\eta)}\,d\eta .
\end{align*}
It follows from Plancherel's theorem that
\begin{equation}\label{eq:bound2}\begin{split}
\big|\Im \left ( \big(|\tilde{Q}|^2 D^{\frac{1}{2}+2\eps} \tilde{u}\big),  D^{\frac{1}{2}}\tilde{u}\right ) \big|
&=\big|\int_{\R}\mathcal{F}(|\tilde{Q}|^2)(\xi)\int_{\R}[|\xi-\eta|^{2\eps} - |\eta|^{2\eps}]\hat{f}(\xi-\eta)\overline{\hat{f}(\eta)}\,d\eta d\xi\big|\\
&\leq \||\xi|^{2\eps}\mathcal{F}(|\tilde{Q}|^2)(\xi)\|_{L_{\xi}^1}\|f\|_{L^2}^2\lesssim \lambda^{-1-2\eps}\lambda = \lambda^{-2\eps}.
\end{split}\end{equation}

Next, we consider the term 
\[
\Im \left (  D^{\frac{1}{2}+\eps}\big[2\Re(\tilde{u}\overline{\tilde{Q}})\tilde{Q}\big],  D^{\frac{1}{2}+\eps}\tilde{u}\right ) 
\]
The challenge consists again in moving the extra $2\eps$ derivatives away from the function $\tilde{u}$. To this end, we write 
\begin{align*}
\Im \left (  D^{\frac{1}{2}+\eps}\big[2\Re(\tilde{u}\overline{\tilde{Q}})\tilde{Q}\big],  D^{\frac{1}{2}+\eps}\tilde{u}\right )  = \Im \left ( \big[2 D^{\frac{1}{2}+\eps}\Re(\tilde{u}\overline{\tilde{Q}})\tilde{Q}\big],  D^{\frac{1}{2}+\eps}\tilde{u}\right )  + \text{error}_1 .
\end{align*}
In order to estimate the error term, introduce $f = 2\Re(\tilde{u}\overline{\tilde{Q}})$, $g = \tilde{Q}$, 
$h = D^{\frac{1}{2}}\tilde{u}$. Then using Plancherel's theorem, we find 
\begin{align*}
\text{error}_1 = \Im \int_{\R}|\xi|^{\eps}\int_{\R}\hat{f}(\xi - \eta)(|\xi|^{\alpha} - |\xi-\eta|^\alpha)\hat{g}(\eta)\overline{\hat{h}(\xi)}\,d\eta d\xi,\quad \alpha = \frac{1}{2}+\eps,
\end{align*}
and so we infer the bound 
\begin{equation}\label{eq:bound3}\begin{split}
|\text{error}_1|&\leq (\|D^\eps f\|_{L^2}\|\widehat{D^\alpha g}\|_{L^1} + \|f\|_{L^2}\|\widehat{D^{\alpha+\eps}g}\|_{L^1})\|h\|_{L^2}\\
&\lesssim (\lambda^{1-\eps}\lambda^{-\frac{1}{2}}\lambda^{-1-\eps} + \lambda\lambda^{-\frac{1}{2}}\lambda^{-1-2\eps})\lambda^{\frac{1}{2}}\lesssim \lambda^{-2\eps} .
\end{split}\end{equation}
We further obtain 
\begin{align*}
\Im \left ( \big[2 D^{\frac{1}{2}+\eps}\Re(\tilde{u}\overline{\tilde{Q}})\tilde{Q}\big],  D^{\frac{1}{2}+\eps}\tilde{u}\right )  = \Im \left ( \big[2 \Re(D^{\frac{1}{2}+\eps}\tilde{u}\overline{\tilde{Q}})\tilde{Q}\big],  D^{\frac{1}{2}+\eps}\tilde{u}\right )  + \text{error}_2,
\end{align*}
and we can estimate with $f = \tilde{u}$, $g = \overline{\tilde{Q}}$, $h = \overline{\tilde{Q}}D^{\frac{1}{2}+\eps}\tilde{u}$,
\begin{equation}\nonumber\begin{split} 
|\text{error}_2| = \big|\int_{\R}\int_{\R}\hat{f}(\xi-\eta)(|\xi|^{\alpha} - |\xi-\eta|^\alpha)\hat{g}(\eta)\overline{\hat{h}(\xi)}\,d\eta d\xi\big| .
\end{split}\end{equation}
Then since we have 
\[
\| h  - D^{\eps}(\overline{\tilde{Q}}D^{\frac{1}{2}}\tilde{u})\|_{L^2}\lesssim \|\widehat{D^\eps\overline{\tilde{Q}}}\|_{L^1}\|D^{\frac{1}{2}}\tilde{u}\|_{L^2}\lesssim \lambda^{-\frac{1}{2}-\eps}\lambda^{\frac{1}{2}},
\]
we find 
\begin{equation}\label{eq:bound4}\begin{split}
|\text{error}_2| &\lesssim \|f\|_{L^2}\|\widehat{D^{\alpha}g}\|_{L^1}\| h  - D^{\eps}(\overline{\tilde{Q}}D^{\frac{1}{2}}\tilde{u})\|_{L^2}\\
&+\|D^\eps f\|_{L^2}\|\widehat{D^{\alpha}g}\|_{L^1}\|\overline{\tilde{Q}}D^{\frac{1}{2}}\tilde{u}\|_{L^2}\\
&+\|f\|_{L^2}\|\widehat{D^{\alpha+\eps}g}\|_{L^1}\|\overline{\tilde{Q}}D^{\frac{1}{2}}\tilde{u}\|_{L^2}\\
&\lesssim \lambda \lambda^{-1-\eps}\lambda^{-\frac{1}{2}-\eps}\lambda^{\frac{1}{2}} + \lambda^{1-\eps}\lambda^{-1-\eps}\lambda^{-\frac{1}{2}}\lambda^{\frac{1}{2}}\\
&\lesssim \lambda^{-2\eps} .
\end{split}\end{equation}
We have thus far reduced estimating the term 
\[
\Im \left (  D^{\frac{1}{2}+\eps}\big[2\Re(\tilde{u}\overline{\tilde{Q}})\tilde{Q}\big],  D^{\frac{1}{2}+\eps}\tilde{u}\right )  
\]
to estimating the term
\begin{align*}
&\Im \left (  \big[2\Re(D^{\frac{1}{2}+\eps}\tilde{u}\overline{\tilde{Q}})\tilde{Q}\big],  D^{\frac{1}{2}+\eps}\tilde{u}\right ) \\
&= \Im \left (  \big[2D^{\frac{1}{2}}\Re(D^{\eps}\tilde{u}\overline{\tilde{Q}})\tilde{Q}\big],  D^{\frac{1}{2}+\eps}\tilde{u}\right )  + \text{error}_3\\
&=\Im \left (  \big[2\Re(D^{\eps}\tilde{u}\overline{\tilde{Q}})\tilde{Q}\big],  D^{1+\eps}\tilde{u}\right )  + \text{error}_3 + \text{error}_4 . \\
\end{align*}
Denoting $f = D^\eps\tilde{u}$, $g = \overline{\tilde{Q}}$, $h = \overline{\tilde{Q}}D^{\frac{1}{2}}\tilde{u}$, $h_1 = D^{\eps}h - \overline{\tilde{Q}}D^{\frac{1}{2}+\eps}\tilde{u}$, we find with Plancherel's theorem
\begin{equation}\label{eq:bound5}\begin{split}
|\text{error}_3| &\lesssim \int_{\R}|\hat{f}|(\xi-\eta)\big||\xi|^{\frac{1}{2}} - |\xi-\eta|^{\frac{1}{2}}\big||\hat{g}(\eta)| |\xi|^{\eps}|\hat{h}|(\xi)\,d\eta d\xi\\
&+\int_{\R}|\hat{f}|(\xi-\eta)\big||\xi|^{\frac{1}{2}} - |\xi-\eta|^{\frac{1}{2}}\big||\hat{g}(\eta)||\widehat{h_1}|(\xi)\,d\eta d\xi\\
&\lesssim [\|D^{\eps}f\|_{L^2}\|\widehat{D^{\frac{1}{2}}g}\|_{L^1} +  \|f\|_{L^2}\|\widehat{D^{\frac{1}{2}+\eps}g}\|_{L^1}]\|h\|_{L^2}\\
&+\|f\|_{L^2}\|\widehat{D^{\frac{1}{2}}g}\|_{L^1}\|h_1\|_{L^2}\\
&\lesssim (\lambda^{1-2\eps}\lambda^{-1} + \lambda^{1-\eps}\lambda^{-1-\eps})\lambda^{-\frac{1}{2}}\lambda^{\frac{1}{2}}
+\lambda^{1-\eps}\lambda^{-1}\lambda^{-\frac{1}{2}-\eps}\lambda^{\frac{1}{2}}\lesssim \lambda^{-2\eps} .
\end{split}\end{equation}
Further, we find with $f = 2\Re(D^{\eps}\tilde{u}\overline{\tilde{Q}})$, $g = \tilde{Q}$, $h = D^{\frac{1}{2}}\tilde{u}$,
\begin{equation}\label{eq:bound6}\begin{split}
|\text{error}_4| &\lesssim \int_{\R}|\hat{f}|(\xi-\eta)\big||\xi|^{\frac{1}{2}} - |\xi-\eta|^{\frac{1}{2}}\big| |\hat{g}|(\eta)|\xi|^{\eps}|\hat{h}|(\xi)\,d\eta d\xi\\
&\lesssim (\| D^\eps f\|_{L^2}\|\widehat{D^{\frac{1}{2}}g}\|_{L^1} + \|f\|_{L^2}\|\widehat{D^{\frac{1}{2}+\eps}g}\|_{L^1})\|h\|_{L^2}\\
&\lesssim (\lambda^{1-2\eps}\lambda^{-\frac{1}{2}}\lambda^{-1} + \lambda^{1-\eps}\lambda^{-\frac{1}{2}-\eps}\lambda^{-1}+\lambda^{1-\eps}\lambda^{-\frac{1}{2}}\lambda^{-1-\eps})\lambda^{\frac{1}{2}}\\
&\lesssim \lambda^{-2\eps} .
\end{split}\end{equation}
We have now reduced things to the term 
\begin{align*}
&\Im \left (  \big[2\Re(D^{\eps}\tilde{u}\overline{\tilde{Q}})\tilde{Q}\big],  D^{1+\eps}\tilde{u}\right ) \\
&= - \Re \left (  \big[2\Re(D^{\eps}\tilde{u}\overline{\tilde{Q}})\tilde{Q}\big],  D^{\eps}\partial_t\tilde{u}\right ) 
-\Im \left (  \big[2\Re(D^{\eps}\tilde{u}\overline{\tilde{Q}})\tilde{Q}\big],  D^{\eps}G\right ) \\
&=- \Re \left (  2\Re(D^{\eps}\tilde{u}\overline{\tilde{Q}}), \partial_t\big(\overline{\tilde{Q}}D^{\eps}\tilde{u}\big)\right )  + \Re \left (  2\Re(D^{\eps}\tilde{u}\overline{\tilde{Q}}), (\partial_t\overline{\tilde{Q}})D^{\eps}\tilde{u}\right ) \\
&-\Im \left (  \big[2\Re(D^{\eps}\tilde{u}\overline{\tilde{Q}})\tilde{Q}\big],  D^{\eps}G\right ) , \\
\end{align*}
where we put $G = |\tilde{u}|^2\tilde{u} + \psi + F$. We finally estimate the contributions of these three terms: for the first term after the last equality sign, we have 
\begin{equation}\label{eq:bound7}\begin{split}
\int_t^{t_1} -\Re \left (  2\Re(D^{\eps}\tilde{u}\overline{\tilde{Q}}), \partial_t\big(\overline{\tilde{Q}}D^{\eps}\tilde{u} \right )\,dt = \|\Re(D^{\eps}\tilde{u}\overline{\tilde{Q}})|_{t}^{t_1}\|_{L^2}^2\lesssim (\lambda^{\frac{1}{2}-\eps})^{2} .
\end{split}\end{equation}
Next, we find 
\begin{equation}\label{eq:bound8}\begin{split}
&\big|\Re \left (  2\Re(D^{\eps}\tilde{u}\overline{\tilde{Q}}), (\partial_t\overline{\tilde{Q}})D^{\eps}\tilde{u}\right ) \big|\\
&\lesssim \|D^{\eps}\tilde{u}\overline{\tilde{Q}})\|_{L^2}\|\partial_t\overline{\tilde{Q}}\|_{L^\infty}\|D^{\eps}\tilde{u}\|_{L^2}\lesssim (\lambda^{1-\eps})^2\lambda^{-1} .
\end{split}\end{equation}
Finally, we estimate the third term above involving the expression $G_n$. We have schematically 
\[
|\tilde{u}|^2\tilde{u} + F = \tilde{Q}^2\tilde{u} + \tilde{Q}\tilde{u}^2 + \tilde{u}^3
\]
Using Lemma \ref{lem:brezis}, it follows that 
\begin{align*}
\|D^{\eps}[|\tilde{u}|^2\tilde{u} + F]\|_{L^2}&\lesssim \|D^{\eps}\tilde{u}\|_{L^2}(\lambda^{-1}+\lambda^{-\frac{1}{2}}\lambda^{\frac{1}{2}}\log^{\frac{1}{2}}\big(\frac{\|\tilde{u}\|_{H^{\frac{1}{2}+\eps}}}{\|\tilde{u}\|_{H^{\frac{1}{2}}}}\big) +\lambda\log\big(\frac{\|\tilde{u}\|_{H^{\frac{1}{2}+\eps}}}{\|\tilde{u}\|_{H^{\frac{1}{2}}}}\big))\\
&+\|\tilde{u}\|_{L^2}(\lambda^{-1-\eps} + \lambda^{-\frac{1}{2}-\eps}\lambda^{\frac{1}{2}}\log^{\frac{1}{2}}\big(\frac{\|\tilde{u}\|_{H^{\frac{1}{2}+\eps}}}{\|\tilde{u}\|_{H^{\frac{1}{2}}}}\big))\\
&\lesssim \lambda^{-\eps} + \lambda^{1-\eps}\log(\|\tilde{u}\|_{H^{\frac{1}{2}+\eps}}) .
\end{align*}
We conclude that 
\begin{equation}\label{eq:bound9}\begin{split}
&\big|\Im \left (  \big[2\Re(D^{\eps}\tilde{u}\overline{\tilde{Q}})\tilde{Q}\big],  D^{\eps}G\right ) \big|\\&\lesssim \|\Re(D^{\eps}\tilde{u}\overline{\tilde{Q}})\tilde{Q}\|_{L^2}[\|D^\eps|[\tilde{u}|^2\tilde{u} + F]\|_{L^2} + \|D^{\eps}\psi\|_{L^2}]\\
&\lesssim \lambda^{-\eps}[\lambda^{-\eps} + \lambda^{1-\eps}\log(\|\tilde{u}\|_{H^{\frac{1}{2}+\eps}})] .
\end{split}\end{equation}
The inequalities \eqref{eq:bound3} - \eqref{eq:bound9} complete the estimate of the term 
\[
\Im \left (  D^{\frac{1}{2}+\eps}\big[2\Re(\tilde{u}\overline{\tilde{Q}})\tilde{Q}\big],  D^{\frac{1}{2}+\eps}\tilde{u}\right )  .
\] 
We continue with the remaining interactions in 
\[
i\Im\left (  -D^{\frac{1}{2}+\eps}[|\tilde{u}|^2\tilde{u} + F],\,D^{\frac{1}{2}+\eps}\tilde{u}\right )  .
\]
We write the higher order terms in $[|\tilde{u}|^2\tilde{u} + F]$ schematically in the form 
\[
\tilde{Q}\tilde{u}^2 + |\tilde{u}|^2\tilde{u}
\]
We get 
\begin{equation}\label{eq:bound10}\begin{split}
&\big|i\Im\left (  -D^{\frac{1}{2}+\eps}[|\tilde{u}|^2\tilde{u}+ F],\,D^{\frac{1}{2}+\eps}\tilde{u}\right ) \big|\\
&\lesssim \|D^{\frac{1}{2}+\eps}\tilde{u}\|_{L^2}\big(\lambda^{-\frac{1}{2}}\|\tilde{u}\|_{H^{\frac{1}{2}}}\log^{\frac{1}{2}}\big(\frac{\|\tilde{u}\|_{H^{\frac{1}{2}+\eps}}}{\|\tilde{u}\|_{H^{\frac{1}{2}}}}\big)\|D^{\frac{1}{2}+\eps}\tilde{u}\|_{L^2}\\
&\hspace{2.5cm}+\|\tilde{u}\|_{H^{\frac{1}{2}}}^2\log\big(\frac{\|\tilde{u}\|_{H^{\frac{1}{2}+\eps}}}{\|\tilde{u}\|_{H^{\frac{1}{2}}}}\big)\|D^{\frac{1}{2}+\eps}\tilde{u}\|_{L^2}\big)
\end{split}\end{equation}
By combining \eqref{eq:bound0} -- \eqref{eq:bound10} and denoting $Y(t): = \|D^{\frac{1}{2}+\eps}\tilde{u}\|_{L^2}^2$, we deduce
\[
|Y'(t)|\lesssim  \lambda^{-2\eps} + \lambda^{1-\eps}\log\big(\frac{Y^{\frac{1}{2}}(t)+1}{\|\tilde{u}\|_{H^{\frac{1}{2}}}}\big)+Y(t)\lambda^{-\frac{1}{2}}\|\tilde{u}\|_{H^{\frac{1}{2}}}\log^{\frac{1}{2}}\big(\frac{Y^{\frac{1}{2}}(t)+1}{\|\tilde{u}\|_{H^{\frac{1}{2}}}}\big).
\]
In view of the fact that $\lambda\sim t^2$, $Y(t_1)\lesssim \lambda^{\frac{1}{2}-2\eps},$ and $\|\tilde{u}\|_{H^{\frac{1}{2}}}\lesssim \lambda^{\frac{1}{2}}$, a Gronwall-lemma type argument implies that 
\[
Y(t)\lesssim \lambda^{\frac{1}{2}-2\eps},
\]
provided $\eps<\frac{1}{4}$, for $t$ sufficiently small, which is the desired a-priori bound. This completes the {\bf Step 6} in the proof of Lemma \ref{lem:back}.

\section{Fractional Leibniz Type Formula}

\begin{lemma} \label{lem:leibniz}
Suppose $N \geq 1$ and let $\phi : \R^N \to \R$ be such that $\nabla \phi$ and $\Delta \phi$ belong to $L^\infty(\R^N)$. Then we have
$$
\left | \int_{\R^N} \bar{u}(x) \nabla \phi(x)  \cdot \nabla u(x) \right | \lesssim \| \nabla \phi \|_{L^\infty} \| u \|_{\dot{H}^{1/2}}^2 + \| \Delta \phi  \|_{L^\infty} \| u \|_{L^2}^2.
$$
\end{lemma}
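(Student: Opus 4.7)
The plan is to split $J := \int_{\R^N}\bar u\,\nabla\phi\cdot\nabla u$ into its real and imaginary parts and treat them separately. For the real part, symmetrizing and integrating by parts immediately gives
\[
J + \overline{J} \;=\; \int\nabla\phi\cdot\nabla|u|^2 \;=\; -\int\Delta\phi\,|u|^2,
\]
so that $|\Re J|\leq \tfrac{1}{2}\|\Delta\phi\|_{L^\infty}\|u\|_{L^2}^2$, which is already absorbed into the second term on the right-hand side of the claimed inequality.

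The bulk of the work will lie in controlling $\Im J$, since the trivial bound $|J|\leq \|\nabla\phi\|_{L^\infty}\|u\|_{L^2}\|\nabla u\|_{L^2}$ costs a full derivative on $u$ while we are allowed only a half. My approach would be a Littlewood-Paley decomposition $u = \sum_{j\in\Z} P_j u$ with $\widehat{P_j u}$ localized to $|\xi|\sim 2^j$, so that $\|u\|_{\dot{H}^{1/2}}^2\sim \sum_j 2^j\|P_j u\|_{L^2}^2$, and to decompose
\[
J \;=\; \sum_{j,k}\int \overline{P_j u}\,\nabla\phi\cdot\nabla P_k u,
\]
separating diagonal ($|j-k|\leq 3$) from off-diagonal contributions. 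Diagonal terms are the easy case: Cauchy-Schwarz together with the Bernstein bound $\|\nabla P_k u\|_{L^2}\lesssim 2^k\|P_k u\|_{L^2}$ yields $\lesssim \|\nabla\phi\|_{L^\infty} 2^j\|P_j u\|_{L^2}^2$, which sums to $\|\nabla\phi\|_{L^\infty}\|u\|_{\dot H^{1/2}}^2$.

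For off-diagonal terms with $j\ll k$ (the case $j\gg k$ being symmetric), I would integrate by parts to shift the derivative off the high-frequency factor, obtaining $-\int \nabla\overline{P_j u}\cdot\nabla\phi\,P_k u -\int\overline{P_j u}\,\Delta\phi\,P_k u$. By Fourier support considerations, one may replace $\nabla\phi$ (respectively $\Delta\phi$) by its Littlewood-Paley component at frequency $\sim 2^k$, and a Bernstein-type estimate for LP blocks gives $\|(\nabla\phi)_{\sim 2^k}\|_{L^\infty}\lesssim 2^{-k}\|\Delta\phi\|_{L^\infty}$. This yields a contribution $\lesssim 2^{j-k}\|\Delta\phi\|_{L^\infty}\|P_j u\|\|P_k u\|$ for the gradient term, whose geometric decay in $k-j$ permits summation via Young's inequality on the dyadic scales to $\lesssim \|\Delta\phi\|_{L^\infty}\|u\|_{L^2}^2$.

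The main obstacle is the second (Laplacian) term: the crude bound $\|\Delta\phi\|_{L^\infty}\|P_j u\|\|P_k u\|$ is \emph{not} summable over $j\ll k$, the series $\sum_{j\ll k}\|P_j u\|\|P_k u\|$ diverging in general. To close this gap I would appeal to the Bony--Coifman--Meyer paraproduct theory, recognizing $\sum_{j<k-3}\overline{P_j u}\,P_k u$ as precisely Bony's paraproduct $T_{\bar u}u$, for which the bilinear bound $\|T_{\bar u}u\|_{L^1}\lesssim \|u\|_{L^2}^2$ is standard; pairing against $\|\Delta\phi\|_{L^\infty}$ then yields $|\int \Delta\phi\,T_{\bar u}u|\lesssim \|\Delta\phi\|_{L^\infty}\|u\|_{L^2}^2$, completing the estimate.
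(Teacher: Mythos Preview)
Your approach is correct but takes a substantially different route from the paper. The paper's proof is a two-line interpolation argument: writing $(f,Tg)=\int\bar f\,\nabla\phi\cdot\nabla g$, Cauchy--Schwarz gives $|(f,Tg)|\lesssim\|f\|_{L^2}\|g\|_{H^1_{a,b}}$, and integrating by parts gives $|(f,Tg)|\lesssim\|f\|_{H^1_{a,b}}\|g\|_{L^2}$, where $\|\cdot\|_{H^1_{a,b}}^2=a^2\|\nabla\cdot\|_{L^2}^2+b^2\|\cdot\|_{L^2}^2$ with $a=\|\nabla\phi\|_{L^\infty}$, $b=\|\Delta\phi\|_{L^\infty}$. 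Real interpolation at $\vartheta=1/2$ then yields the claim immediately.

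Your Littlewood--Paley argument works, but note that the final step is less innocent than you suggest. The bound $\|T_{\bar u}u\|_{L^1}\lesssim\|u\|_{L^2}^2$ is not quite ``standard'' in the form you state it: what is standard is the Coifman--Meyer mapping $L^2\times L^2\to H^1$ for paraproducts (via the square function estimate $S(T_{\bar u}u)\lesssim M\bar u\cdot Su$), and then one uses the embedding $H^1\hookrightarrow L^1$. The more elementary route through the Bony decomposition $|u|^2=T_{\bar u}u+T_u\bar u+R(\bar u,u)$ only controls $\|T_{\bar u}u+T_u\bar u\|_{L^1}$, i.e.\ $\|\Re T_{\bar u}u\|_{L^1}$, not the imaginary part you need. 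So your argument is correct but genuinely relies on Hardy-space paraproduct theory, whereas the paper's interpolation proof needs nothing beyond Cauchy--Schwarz and an identification of interpolation spaces. The trade-off is that your dyadic decomposition is more flexible and would adapt to settings where the clean bilinear interpolation is unavailable.
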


\begin{proof}
By density, it suffices to prove this bound for any Schwartz function $u \in \mathcal{S}(\R^N)$. Let
$$
(f, T g) = \int_{\R^d} \bar{f}(x) \nabla \phi (x) \cdot \nabla g(x) , \quad \mbox{for $f,g \in \mathcal{S}(\R^N)$}.
$$
Define $a = \| \nabla \phi \|_{L^\infty}$ and $b = \| \Delta \phi  \|_{L^\infty}$, where we suppose that $b > 0$ (and hence $a > 0$) holds. (Otherwise, the arguments below can be trivially modified in this case.)  We define the norm $\| \cdot \|_{H^1_{a,b}}$ by setting
$$
\| u \|_{H^{1}_{a,b}}^2 = a^2 \| \nabla u \|_{L^2}^2 + b^2 \| u \|_{L^2}^2 .
$$
By the Cauchy--Schwarz inequality, we immediately find that
$$
\left |(f,Tg) \right | \lesssim \| \nabla \phi  \|_{L^\infty} \|f \|_{L^2} \| \nabla g \|_{L^2} \lesssim \|f \|_{L^2} \| g \|_{H^1_{a,b}} .
$$
On the other hand, if we integrate by parts and apply Cauchy--Schwarz again, we obtain that
\begin{align*}
\left | (f,Tg) \right | & = \left | \int \nabla \bar{f}(x) \cdot \nabla \phi(x) g(x) + \bar{f}(x) \Delta \phi(x) g(x) \right |  \\
& \lesssim \| \nabla \phi \|_{L^\infty} \| \nabla f \|_{L^2} \| g \|_{L^2} + \| \Delta \phi  \|_{L^2} \| f \|_{L^2} \| g \|_{L^2}  \lesssim \| g \|_{L^2} \| f \|_{H^{1}_{a,b}}.
\end{align*}
Combining the previous estimates, we deduce the operator bounds
\begin{equation} \label{ineq:inter1}
\| T \|_{L^2 \to H^{-1}_{a,b}} \lesssim 1 \quad \mbox{and} \quad  \| T \|_{H^{1}_{a,b} \to L^2} \lesssim 1,
\end{equation}
where the space $H^{-1}_{a,b}$ denotes the dual of $H^{1}_{a,b}$ equipped with the dual norm $\| u \|_{H^{-1}_{a,b}} = \sup \{ \left | (v,u) \right |$. 

Now, we are ready to use standard interpolation theory to complete the proof. Indeed, let 
$$X^\vartheta(\R^N) = \left [L^2(\R^N), H^{1}_{a,b}(\R^N)  \right ]_{2, \theta}$$ 
denote the real interpolation of $L^2(\R^N)$ and $H^{1}_{a,b}(\R^N)$ with exponent $\vartheta \in (0,1)$. Using Plancherel's theorem and the equivalence $(a^2 |\xi|^2 + b^2)^{\vartheta} \sim a^{2 \theta} |\xi|^{2 \vartheta} + b^{2 \vartheta}$ and applying standard interpolation arguments (see e.\,g.~\cite[Lemma 23.1]{Ta2007}), we deduce with equivalence of norms that
$$
X^{\vartheta}(\R^N) \simeq H^{\vartheta}_{a,b}(\R^N), \quad \mbox{for $\vartheta \in (0,1)$},
$$ 
where the norm $\| \cdot \|_{H^{\vartheta}_{a,b}}$ is given by 
$$
\| \cdot \|_{H^{\vartheta}_{a,b}}^2 = a^{2 \vartheta} \| D^{\vartheta} u \|_{L^2}^2 + b^{2 \vartheta} \| u \|_{L^2}^2.
$$
From interpolation theory we deduce from \eqref{ineq:inter1} the bound
$$
\left | (f,Tg) \right | \lesssim \| f \|_{H^{\vartheta}_{a,b}} \| g \|_{H^{1-\vartheta}_{a,b}}, \quad \mbox{for $\vartheta \in (0,1)$}.
$$
By taking $\vartheta = 1/2$ and $f=\bar{u}$ and $g=u$, we complete the proof of Lemma \ref{lem:leibniz}. \end{proof}

\end{appendix}

\bibliographystyle{amsplain}


\def\cprime{$'$}
\providecommand{\bysame}{\leavevmode\hbox to3em{\hrulefill}\thinspace}
\providecommand{\MR}{\relax\ifhmode\unskip\space\fi MR }
\providecommand{\MRhref}[2]{%
  \href{http://www.ams.org/mathscinet-getitem?mr=#1}{#2}
}
\providecommand{\href}[2]{#2}

\end{document}